\numberwithin{equation}{section}
\numberwithin{figure}{section}
\theoremstyle{plain}
\newtheorem{thm}{\protect\theoremname}[section]
  \theoremstyle{plain}
  \newtheorem{cor}[thm]{\protect\corollaryname}
  \newtheorem{prop}[thm]{\protect\propositionname}
  \theoremstyle{definition}
  \newtheorem{defn}[thm]{\protect\definitionname}
  \theoremstyle{remark}
  \newtheorem{rem}[thm]{\protect\remarkname}
  \theoremstyle{plain}
  \newtheorem{lem}[thm]{\protect\lemmaname}
  \theoremstyle{definition}
  \newtheorem{example}[thm]{\protect\examplename}
  \theoremstyle{plain}
  \newtheorem{conj}[thm]{\protect\conjecturename}
  \theoremstyle{plain}
  \providecommand{\definitionname}{Definition}
  \providecommand{\examplename}{Example}
  \providecommand{\lemmaname}{Lemma}
  \providecommand{\propositionname}{Proposition}
  \providecommand{\remarkname}{Remark}
  \providecommand{\corollaryname}{Corollary}
\providecommand{\theoremname}{Theorem}
\providecommand{\conjecturename}{Conjecture}
\newcommand*{\triple}[2][.1ex]{%
  \mathrel{\vcenter{\offinterlineskip%
  \hbox{$#2$}\vskip#1\hbox{$#2$}\vskip#1\hbox{$#2$}}}}
\newcommand*{\triplerightarrow}{\triple{\rightarrow}}
\begin{document}

\title{Derived deformation theory of algebraic structures}

\author{Grégory Ginot, Sinan Yalin}

\begin{abstract}
The main purpose of this article is to develop an explicit derived deformation theory of algebraic structures at a high level of generality, encompassing in a common framework various kinds of algebras (associative, commutative, Poisson...) or bialgebras (associative and coassociative, Lie, Frobenius...), that is algebraic structures parametrized by props. 

A central aspect is that we define and study moduli spaces of deformations of algebraic structures \emph{up to quasi-isomorphisms} (and not only up to isomorphims or $\infty$-isotopies).
To do so, we implement methods coming from derived algebraic geometry, by encapsulating these deformation theories as classifying (pre)stacks with good infinitesimal properties and 
derived formal groups. In particular, we prove that the Lie algebra describing the deformation theory of an object in a given $\infty$-category of dg algebras can be obtained equivalently as the tangent complex of loops on a derived quotient of this moduli space by the homotopy automorphims of this object.

Moreover, we provide explicit formulae for such derived deformation problems of algebraic structures up to quasi-isomorphisms and relate them in a precise way to other standard deformation problems of algebraic structures. This relation is given by a fiber sequence of the associated dg-Lie algebras of their deformation complexes. 
 Our results  provide simultaneously a vast generalization of standard deformation theory of algebraic structures which is suitable (and needed) to set up algebraic deformation theory both at the $\infty$-categorical level and at a higher level of generality than algebras over operads.

 In addition, we study a general criterion to compare formal moduli problems of different algebraic structures and apply our formalism to $E_n$-algebras and bialgebras. 

\end{abstract}
\maketitle

\pagebreak

\tableofcontents{}

\pagebreak

\section*{Introduction}

Deformations of algebraic structures of various kind, both classical and homotopical, have played a central role in mathematical physics and algebraic topology since the pioneering work of Drinfeld~\cite{Dri, Dri2} in the 80s as well as the work of Kontsevich~\cite{Ko1,Ko2} or Chas-Sullivan~\cite{CS} in the late 90s.  For instance, in classical deformation quantization, a star-product is a deformation of the commutative algebra of functions to an associative algebra while a quantum group is a deformation of the cocommutative bialgebra structure of a universal envelopping algebra.

In most applications,  one consider deformations of algebraic structures up to some equivalence relations, usually called gauge equivalences. In particular, different gauge equivalences on the same algebraic structure lead to \emph{different} deformation theories. This data is organized into a moduli \emph{space} of deformations whose connected components are the gauge equivalence classes of the deformed structure. Their higher homotopy groups encode (higher) symmetries which are becoming increasingly important in modern applications. By the Deligne philosophy, now a deep theorem by Lurie~\cite{Lur02} and Pridham~\cite{Pri} 
such a moduli space is equivalent to the data of a homotopy Lie algebra.

The emergence of 
derived/higher structures techniques allows not only to consider general moduli spaces of deformations (derived formal moduli problems), but also to consider deformations of algebraic structures more general than those given by Quillen model categories of algebras over operads. In particular, it allows to consider  bialgebraic structures, that is algebras over \emph{props}, in \emph{high generality}. 

The main goal of this paper is to exploit these techniques to  prove several new results about deformation theory of algebraic structures. In particular, we seek to   provide  appropriate extension of classical algebraic deformation theory simultaneously in two directions:
\begin{itemize}
\item[(1)] By considering very general kinds of algebraic structures parametrized by props, which are of crucial importance in various problems of topology, geometry and mathematical physics where such structures appear;

\item[(2)] By considering derived formal moduli problems controlling the deformation theory of algebras \emph{in the $\infty$-category of algebras}, that is \emph{up to quasi-isomorphism}, contrary to the setting of standard operadic deformation theory which considers deformations \emph{up to $\infty$-isotopies} (see \S~\ref{SS:MainresultsIntro} below and section~\ref{S:Examples},\ref{SS:DefOperad}, \ref{SS:DefOperad2} as well for detailed comparison and examples)\footnote{for instance, algebraic structures up to quasi-isomorphisms  form precisely Kontsevich setting encompassing deformation of functions   into star-products in the analytic or algebraic geometry context as well as for smooth manifolds, where it boils down to \rq\rq{}up to isomorphism\lq\lq{} 
}.
\end{itemize}
Both directions require to work out new methods:
\begin{itemize}
\item[(1)] by getting rid of the standard use of Quillen model structures to describe model categories of algebras, which does not make sense anymore for algebras over props: one \emph{has to work directly at an $\infty$-categorical level}.

\item[(2)] by replacing the classical gauge group action and classical deformation functors by appropriate derived moduli spaces of algebraic structures and \emph{derived formal groups} of homotopy automorphisms.
\end{itemize}

We now explain in more details the motivations and historical setting for our work in \S~\ref{SS:IntroMotiv} and then our contributions and main results in \S~\ref{SS:MainresultsIntro}.

\subsection{Motivations}\label{SS:IntroMotiv}
As already mentioned,  many algebraic structures of various types play a key role in algebra, topology, geometry and mathematical physics. This is the case of  associative algebras, commutative algebras, Lie algebras, and Poisson algebras to name a few. All these kinds of algebras share a common feature, being defined by operations with several inputs and one single output (the associative product, the Lie bracket, the Poisson bracket).
The notion of operad is a unifying approach to encompass all these structures in a single formalism, and has proven to be a very powerful tool to study these structures, both from a combinatorial perspective and in a topological or dg-context\footnote{where the strict algebraic structure are no longer invariant under the natural equivalence of the underlying object and need to be replaced by their homotopy enhancement}. The first historical examples, of topological nature, are the operads of little $n$-disks discovered in the study of iterated loop spaces in the sixties. 
Algebras governed by (versions of) these operads, as well as their dg-cousin formed by (shifted) Poisson algebras and their deformation theory play a prominent role in a variety of topics such as the study 
of iterated loop spaces, Goodwillie-Weiss calculus for embedding spaces, deformation quantization of Poisson manifolds and Lie bialgebras, factorization homology and derived symplectic/Poisson geometry~\cite{Ko1, Ko2, Lur0, Lur2, CPTVV, FG, Fra, Fre5, GTZ, Hin, Kap-TFT, KoSo, May, Preygel, Tam1, Toen-ICM}. 

However, algebraic structure governed by operations with several inputs \emph{and several outputs} also appear naturally in a variety of topics related to the same fields of mathematics. Standard example are associative and coassociative bialgebras and Lie bialgebras, which 
 are central in various topics of algebraic topology, representation theory and mathematical physics~\cite{Dri, Dri2, Baues, EK1, EK2, GS, Mer1, Mer2}. 
 Here the formalism of props,  which actually goes back to \cite{MLa}, is the convenient unifying framework to handle such structures. Props plays a crucial role in the deformation quantization process for Lie bialgebras, as shown by Etingof-Kazdhan (\cite{EK1}, \cite{EK2}), and more generally in the theory of quantization functors \cite{EE, MW}.  Props also appear naturally in topology, for example the Frobenius bialgebra structure on the cohomology of compact oriented manifolds coming from Poincaré duality, and the involutive Lie bialgebra structure on the equivariant homology of loop spaces on manifolds, which lies at the heart of string topology (\cite{CS1},\cite{CS2}) and are also central  in symplectic field theory and Lagrangian Floer theory by the work of Cielebak-Fukaya-Latsheev \cite{CFL}. Props also provide a concise way to encode various field theories such as topological quantum field theories and conformal field theories, and have recently proven to be the kind of algebraic structure underlying the topological recursion phenomenom, as unraveled by Kontsevich and Soibelman in \cite{KS} (see \cite{Bor} for  connections with mathematical physics and algebraic geometry).

A meaningful idea to understand the behavior of these various structures and, accordingly, to get more information about the mathematical objects on which they act, is to organize all the possible deformations of a given structure into a single \emph{geometric object} which encapsulates not only the deformations but also an equivalence relation between these deformations. That is, to define a \emph{formal moduli problem}. Such ideas goes back to the pioneering work of Kodaira-Spencer in geometry and the work of Gerstenhaber on associative algebras and 
Hochschild cohomology. In the eighties supported by Deligne and Drinfeld, 
a groundbreaking principle emerged, asserting that any formal moduli problem corresponds to a certain differential graded Lie algebra which parametrizes algebraically the corresponding deformation theory. The deformations correspond to special elements of this Lie algebra called the Maurer-Cartan elements, and equivalences of deformations are determined by a quotient under the action of a gauge group.
This principle had major applications among which one can pick deformation theory of complex manifolds, representation spaces of fundamental groups of projective varieties in Goldman-Millson's theory, and Kontsevich deformation quantization of Poisson manifolds.

The theory of ``classical'' or ``underived'' formal moduli problems was not sufficient to made this principle completely precise and had several limitations (like impossibility to consider weak equivalences or getting non equivalent dg-Lie algebras describing the same moduli problem). 


These difficulties were solved by considering higher structured geometric moduli problem 
using \emph{$\infty$-category theory} and \emph{derived algebraic geometry}.
The appropriate formalism is then the theory of \emph{derived formal moduli problems}, which are simplicial presheaves over the opposite category of augmented artinian cdgas satisfying some extra properties with respect to homotopy pullbacks (a derived version of the Schlessinger condition). Precisely,  Lurie and Pridham  \cite{Lur0,Pri} proved that (derived) formal moduli problems and dg Lie algebras are equivalent as $\infty$-categories. 
In fact, a given formal moduli problem controlling the infinitesimal neighbourhood  of a point on a moduli space corresponds to a dg Lie algebra called the deformation complex of this point.

In this paper, we use rather systematically these ideas of derived formal moduli problems and derived techniques to study   deformation theory of \emph{algebraic structures}.   In particular, we give a conceptual explanation of the differences between various deformation complexes appearing in the literature by explaining which kind of derived moduli problem each of these complexes controls. 
A key part of our study is that we study algebras over \emph{very general props} and that we consider moduli spaces of deformations of algebraic structures up to \emph{quasi-isomorphisms}.

\subsection{Main results}\label{SS:MainresultsIntro}


We study moduli spaces of algebraic structures and formal moduli problems controlling their deformations. In the differential graded setting, algebraic structures are deformed as algebraic structures \emph{up to homotopy}.  A convenient formalism to deal with such at a high level of generality, encompassing not only algebras but also bialgebras, is the notion of (dg-)properad \cite{Val}. 
Briefly, to any complex $X$, on can associate its endomorphism properad $End_X(m,n)=Hom(X^{\otimes m},X^{\otimes n})$. Then,
 given a properad $P$,  a $P$-algebra structure on $X$ is given by a properad morphism
\[
P\rightarrow End_X.
\]

There are several possible natural notions for defining deformations of (possibly homotopy) algebraic structures and we consider and compare several of them.

A standard \emph{(pr)operadic approach} to define a deformation complex of those structure is as follows. Given a properad $P$, the notion of 
homotopy $P$-algebra (or $P$-algebra up to homotopy) can be defined properly by considering cofibrant resolutions of properads. That is, by considering $P_{\infty}$-algebra where $P_{\infty}$ is a \emph{cofibrant resolution} of $P$ in the model category of properads. To any  
$P_{\infty}$-algebra structure $\varphi:P_{\infty}\rightarrow End_X$ on a complex $X$, there is a formal moduli problem $\underline{P_{\infty}\{X\}}^{\varphi}$ controlling the deformation theory of the properad morphism $\varphi$.  The associated deformation complex is an explicit dg Lie algebra noted $g_{P,X}^{\varphi}$. This is a rather standard approach. Let us note that, even though we use properads in the explanation above, there is a well-defined model structure on the category of props such that homotopy $P$-algebras make sense for $P$ a general prop. Homotopy $P$-algebras do form an $\infty$-category which is homotopy invariant under choices of resolution (see \ref{Thm:Yalbis}), and the formal moduli problem $\underline{P_{\infty}\{X\}}^{\varphi}$ is well defined for $P$ a prop as well. The restriction to properads comes in when one wants to use explicit models for the dg Lie algebra $g_{P,X}^{\varphi}$ (and its variants introduced throughout the text). We explain more precisely in Sections $1$ and $2$ where the restriction to properads is needed.

\smallskip

However, we can also construct a derived formal moduli problem controling the deformation theory of a $P_{\infty}$-algebra $A$ directly \emph{in the} $\infty$\emph{-category} $P_{\infty}-Alg$ of $P_\infty$-algebras (with quasi-isomorphisms as weak equivalences). This is not the same as deforming the morphism $\varphi$ (in a way precised below, the Maurer-Cartan elements are the same in both cases but \emph{the gauge equivalence relation differs}).

To set up the appropriate framework for such a deformation theory, we introduce  in Section~\ref{S:formalgeom} the notion of derived prestack group, which can be thought as a family of homotopy formal groups parametrized by a base space  and apply  this formalism to the deformation theory of algebras over properads. Briefly, one associates to $A$ its \emph{derived prestack group of homotopy automorphisms} which is  the $\infty$-functor
\begin{eqnarray*}
G_P(A):cdga_{\mathbb{K}} & \rightarrow & E_1\mathrm{-Alg}^{gp}(\mathrm{Spaces}) \\
R & \longmapsto & haut_{P_{\infty}-Alg(Mod_R)}(A\otimes R)
\end{eqnarray*}
where $haut_{P_{\infty}-Alg(Mod_R)}(A\otimes R)$ is the $\infty$-group of self equivalences of $A\otimes R$ in the $\infty$-category of $R$-linear $P_{\infty}$-algebras.
Taking homotopy fibers over augmented Artinian cdgas, we obtain a derived formal group (see Section~\ref{SS:DerivedPreStackGp}):
\[
\widehat{G_P(A)_{id}}(R)=hofib(G_P(A)(R)\rightarrow G_P(A)(\mathbb{K}))
\]
whose values at an augmented Artinian cdga $R$ is the space of $R$-deformations of $A$. Precisely, we prove
\begin{thm}[See Theorem~\ref{T:hautderivedgroup}]
The simplicial presheaf $G_P(A)$ defines 
a grouplike $E_1$-monoid object in the $\infty$-category of infinitesimally cohesive simplicial $\infty$-presheaves. 
In particular $\widehat{G_P(A)_{id}}$ is a derived formal group.
\end{thm}
By the 
\emph{equivalence between derived formal groups and derived formal moduli problems}, these deformations are parametrized by a dg Lie algebra $Lie(\widehat{G_P(A)_{id}})$.

Two natural questions arise from these constructions. 
\begin{itemize} \item First, can we relate the classical deformation theory of the morphism $\varphi:P_{\infty}\rightarrow End_X$, controled by $g_{P,X}^{\varphi}$, to the deformation theory of $(X,\varphi)$ in $P_{\infty}-Alg$, controled by $Lie(\widehat{G_P(X,\varphi)})$ ? \item Second, is there an explicit formula computing $Lie(\widehat{G_P(X,\varphi)})$ for general $P$ and $(X,\varphi)$ ?
\end{itemize}
The answer to the first question is the following natural homotopy fiber sequence relating these two deformation complexes :
\begin{thm}[See Theorem~\ref{P: hofibgroups}]
There is a  fiber sequence of $L_{\infty}$-algebras
\[
g_{P,X}^{\varphi}\longrightarrow Lie(\widehat{G_P(X,\varphi)})\longrightarrow Lie(\underline{haut}(X))
\]
where $Lie(\underline{haut}(X))$ is the Lie algebra of homotopy automorphisms of $X$ as a complex.
\end{thm}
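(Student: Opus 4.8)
The plan is to deduce the stated fiber sequence of $L_{\infty}$-algebras from a fiber sequence of derived formal groups, via the equivalence between derived formal groups and $L_{\infty}$-algebras. The fiber sequence of formal groups is produced by the forgetful functor that remembers only the underlying complex of a $P_{\infty}$-algebra. Concretely, the $\infty$-functor $P_{\infty}\text{-}\mathrm{Alg}(Mod_A)\to Mod_A$ sends equivalences to equivalences and preserves composition, so by functoriality of homotopy automorphisms it induces, naturally in the Artinian parameter $R$, a morphism of derived prestack groups
\[
G_P(A)\longrightarrow G_{\mathrm{Mod}}(A),\qquad G_{\mathrm{Mod}}(A)(R)=haut_{Mod_A}(A\otimes R),
\]
where $G_{\mathrm{Mod}}(A)$ is the derived prestack group of homotopy automorphisms of the underlying complex $X$ (that is, the instance of the construction for the trivial properad, whose algebras are complexes). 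This is a morphism of grouplike $E_1$-monoids because the forgetful functor is compatible with composition and with the base change $-\otimes R$.

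By Theorem~\ref{T:hautderivedgroup}, applied both to $P_{\infty}$ and to the trivial properad, the two prestack groups are grouplike $E_1$-monoids in infinitesimally cohesive simplicial presheaves, so taking homotopy fibers over the identity produces a morphism of derived formal groups
\[
\widehat{G_P(X,\psi)}\longrightarrow\widehat{G_{\mathrm{Mod}}(X)},
\]
whose target is, by construction, the derived formal group with $L_{\infty}$-algebra $Lie(\underline{haut}(X))$. Since infinitesimally cohesive presheaves are stable under homotopy pullbacks, this morphism admits a homotopy fiber, which is again a derived formal group.

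The heart of the argument is to identify this homotopy fiber. Over an augmented Artinian cdga $R$, it is the space of self-equivalences of $A\otimes R$ in $P_{\infty}\text{-}\mathrm{Alg}(Mod_A)$ whose underlying $A$-module self-equivalence is coherently trivialized, i.e. equipped with a path to $\mathrm{id}_{X\otimes R}$. By transport of structure such data is the same as an $R$-linear deformation of the structure morphism $\psi\colon P_{\infty}\to End_X$ together with an $\infty$-isotopy back to $\psi$, which is precisely the data classified by the operadic formal moduli problem $\underline{P_{\infty}\{X\}}^{\psi}$. I would make this precise by comparing both descriptions with the Maurer-Cartan space $\mathrm{MC}_\bullet(g_{P,X}^{\psi}\otimes\mathfrak{m}_R)$ of the relevant convolution dg Lie algebra, matching the fibered self-equivalences with its gauge $\infty$-groupoid. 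It then follows that the fiber is the derived formal group associated to $\underline{P_{\infty}\{X\}}^{\psi}$, whose $L_{\infty}$-algebra is the deformation complex $g_{P,X}^{\psi}$.

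Finally, since the functor $Lie$ is an equivalence of $\infty$-categories between derived formal groups and $L_{\infty}$-algebras (Lurie-Pridham), it preserves homotopy fiber sequences, and applying it to
\[
\widehat{\mathrm{Fib}}\longrightarrow\widehat{G_P(X,\psi)}\longrightarrow\widehat{G_{\mathrm{Mod}}(X)}
\]
yields the asserted sequence $g_{P,X}^{\psi}\to Lie(\widehat{G_P(X,\psi)})\to Lie(\underline{haut}(X))$. The main obstacle is the fiber identification of the previous paragraph: one must show that the homotopy fiber of the forgetful map on homotopy automorphism groups is genuinely equivalent, as a formal moduli problem, to the deformation problem of $\psi$, and not merely that the two have the same Maurer-Cartan elements (which is comparatively easy). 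This forces a comparison of the two gauge relations — $\infty$-isotopies of the morphism $\psi$ against self-equivalences trivialized on $X$ — at the level of the full mapping spaces, tracking all higher homotopies coherently and compatibly with the infinitesimal parameter $R$. Once this equivalence and Theorem~\ref{T:hautderivedgroup} are granted, the remainder of the argument is formal.
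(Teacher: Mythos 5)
Your overall architecture matches the paper's: produce a fiber sequence of derived (prestack) groups from the forgetful functor, complete at the identity, and transport it through the Lurie--Pridham equivalence; the formal bookkeeping at the end (stability of infinitesimal cohesiveness under homotopy limits, $\Omega\widehat{F_x}\simeq\widehat{\Omega_xF}$, $B_{fmp}\Omega\simeq\mathrm{Id}$) is exactly what the paper does. The difference, and the genuine gap, is the step you yourself flag as ``the heart of the argument'': identifying the homotopy fiber of $\underline{haut}_{P_{\infty}}(X,\psi)\to\underline{haut}(X)$ with (the loop space at $\psi$ of) $\underline{P_{\infty}\{X\}}$. You propose to do this by a direct ``transport of structure'' comparison of trivialized self-equivalences with the gauge $\infty$-groupoid of $\mathrm{MC}_\bullet(g_{P,X}^{\psi}\otimes\mathfrak{m}_R)$, phrased in terms of $\infty$-isotopies. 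This route breaks down in the stated generality: for algebras over a genuine prop(erad) there is no homotopy-invariant notion of $\infty$-morphism or $\infty$-isotopy (the paper is explicit about this in its concluding section), there is no model structure on $P_\infty$-algebras, and the mapping spaces of the hammock localization defining $\underline{haut}_{P_{\infty}}(X,\psi)$ are not directly comparable to a Maurer--Cartan gauge groupoid. So the comparison you sketch cannot be carried out as stated beyond the operadic case, and even there it is essentially the content of a nontrivial theorem rather than a routine check.

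The paper fills this hole by running the argument in the opposite direction. It first establishes the presheaf version of Rezk's homotopy pullback theorem (Proposition~\ref{P: Rezkgen}, resting on \cite[Theorem 0.1]{Yal3}): the moduli space of structures $\underline{P_{\infty}\{X\}}$ is the homotopy fiber of the map of Dwyer--Kan classification spaces $\underline{\mathcal{N}wP_{\infty}\text{-}Alg}\to\underline{\mathcal{N}wCh_{\mathbb{K}}}$. Looping this once at the base points $(X,\psi)$ and $X$ produces, via Lemma~\ref{L:hautasLoop}, the sequence $\underline{haut}_{P_{\infty}}(X,\psi)\to\underline{haut}(X)\to\underline{P_{\infty}\{X\}}$, and looping again yields $\Omega_{\psi}\underline{P_{\infty}\{X\}}\to\underline{haut}_{P_{\infty}}(X,\psi)\to\underline{haut}(X)$ with the fiber identified \emph{for free}; the identification of $Lie$ of the completed fiber with $g_{P,X}^{\psi}$ is then done on the mapping space of properads via Theorem~\ref{T:Yal2} and Proposition~\ref{P:TangentLinfty}, which is tractable precisely because it avoids the localized automorphism spaces. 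To repair your proof you should replace the transport-of-structure paragraph by this Rezk-type fiber sequence (or prove an equivalent statement); without it, the claim that the fiber of the forgetful map on automorphism groups is the deformation problem of $\psi$ remains unestablished.
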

To illustrate concretely how this fiber sequence explains the difference between $g_{P,X}^{\varphi}$ and $Lie(\widehat{G_P(X,\varphi)})$, let us start with the following observation.
One should note that the deformation complex 
$g_{P,X}^{\varphi}$ does \emph{not} give exactly the usual cohomology theories of algebras. 
As a motivating example, let us consider the case of the Hochschild cochain complex of a dg associative algebra $A$ which can be written as $\mathrm{Hom}( A^{\otimes \bullet}, A)$.  
This Hochschild complex is bigraded, with a cohomological grading induced by the grading of $A$  and a weight grading given by the tensor powers $A^{\otimes \bullet}$. 
It turns out that the classical deformation complex $g_{Ass,X}^{\varphi}$, where $X$ is the underlying complex of the algebra $A$ and $\varphi:Ass\rightarrow End_X$ its associative algebra structure, is $\mathrm{Hom}( A^{\otimes >1}, A)$ and in particular misses the summand $\mathrm{Hom}(A,A)$ of weight $1$; which is precisely the one allowing to consider algebras up to (quasi-)isomorphisms. 

The Lie algebra $g_{P,X}^{\varphi}$ can be described very explicitly in terms of a convolution algebra associated to the properad $P_{\infty}$ (Proposition~\ref{P:TangentLinfty}). In section~\ref{S:Plus}, we provide a similar properadic description of the Lie algebra of the formal moduli of homotopy automophisms $\widehat{G_P(X,\varphi)}$. To do so, we use  the ``plus'' construction $g_{P^+,X}^{\varphi^+}$, which is a functorial construction modifying any dg properad to get a new properad encoding both a $P$-algebra structure on $X$ and a compatible differential.
This gives us an explicit model of the deformation complex of $(X,\varphi)$ in the $\infty$-category of $P_{\infty}$-algebras \emph{up to quasi-isomorphisms} and thus answers the second question:
\begin{thm}[See~Theorems \ref{T:Def+=hAut} and \ref{T:twistedsemidirprod}]\label{T:MainThmIntro2}
There are equivalences of $L_\infty$-algebras
\[
Lie(\widehat{G_P(X,\varphi)}) \simeq  g_{P,X}^{\varphi}\rtimes_{\xi} End(X) \simeq g_{P^+,X}^{\varphi^+}.
\]
\end{thm}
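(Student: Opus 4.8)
The plan is to prove the two equivalences separately, obtaining the right-hand one by an explicit comparison of convolution $L_\infty$-algebras and the left-hand one from the homotopy fiber sequence of Theorem~\ref{P: hofibgroups}. I would begin with the right-hand equivalence $g_{P,X}^{\psi}\rtimes^h End(X) \simeq g_{P^+,X}^{\psi^+}$, which is where the homotopy semidirect product is produced concretely. Recalling from \S~\ref{S:Plus} that $g_{P,X}^{\psi}$ has a model as a properadic convolution $L_\infty$-algebra built from the (co)generators of $P$ against the endomorphism properad $End_X$, graded by weight, and that the plus construction $P^+$ is obtained by adjoining to $P$ precisely the weight-one generators responsible for the missing summand $\mathrm{Hom}(X,X)$, I would check at the level of these explicit complexes that $g_{P^+,X}^{\psi^+}$ splits as $g_{P,X}^{\psi}\oplus End(X)$, with the weight-one part $End(X)=\mathrm{Hom}(X,X)$ carrying the commutator bracket and with the pairing of $End(X)$ against $g_{P,X}^{\psi}$ given by the infinitesimal transport-of-structure action. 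These are exactly the structure maps of a homotopy semidirect product, so the identification reduces to a careful inspection of the twisted differential and of the brackets on $g_{P^+,X}^{\psi^+}$.

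For the left-hand equivalence $Lie(\widehat{G_P(X,\psi)}) \simeq g_{P,X}^{\psi}\rtimes^h End(X)$, I would start from the homotopy fiber sequence of $L_\infty$-algebras
\[
g_{P,X}^{\psi}\longrightarrow Lie(\widehat{G_P(X,\psi)})\longrightarrow Lie(\underline{haut}(X))
\]
provided by Theorem~\ref{P: hofibgroups}, together with the identification $Lie(\underline{haut}(X)) \simeq End(X)$ of the tangent Lie algebra of homotopy automorphisms of $X$ with its endomorphism dg Lie algebra. To upgrade this extension to a semidirect product, I would construct a section $End(X) \to Lie(\widehat{G_P(X,\psi)})$ of the projection coming from transport of structure: an infinitesimal self-equivalence of $X$ transports $\psi$ to an equivalent $P_\infty$-structure and therefore lifts canonically to the formal group $\widehat{G_P(X,\psi)}$. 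This uses crucially that we work \emph{up to quasi-isomorphism}, so the accompanying coherence homotopy is always available — in contrast with the up-to-$\infty$-isotopy setting, where such lifts do not exist and the extra summand $\mathrm{Hom}(X,X)$ is absent. The existence of this section as a map of derived formal groups, hence of $L_\infty$-algebras after the equivalence between derived formal groups and dg Lie algebras, is guaranteed by the grouplike, infinitesimally cohesive structure furnished by Theorem~\ref{T:hautderivedgroup}, and it exhibits $Lie(\widehat{G_P(X,\psi)})$ as $g_{P,X}^{\psi}\rtimes^h End(X)$ with the same action as in the first step.

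The main obstacle I anticipate is the construction of this splitting together with the verification that the two routes to the semidirect product agree. Producing the section at the level of formal groups, and not merely of underlying complexes, requires making the transport-of-structure lift homotopy-coherent and compatible with the $E_1$-monoid structure on $G_P(X,\psi)$, and then showing that the induced $End(X)$-action on $g_{P,X}^{\psi}$ coincides with the convolution action appearing in $g_{P^+,X}^{\psi^+}$. Matching these actions, while controlling the higher $L_\infty$-brackets generated by the homotopy twisting in both models, is the delicate computational heart of the argument; the formal-moduli-problem/dg-Lie-algebra equivalence then lets me conclude from an equivalence of the associated deformation functors rather than from a strict morphism of complexes.
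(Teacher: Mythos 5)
Your proposal is essentially correct, and for the left-hand equivalence it follows the paper's own route: the paper also starts from the fiber sequence of Theorem~\ref{P: hofibgroups}, identifies $Lie(\underline{haut}(X))$ with $End(X)$ (Proposition~\ref{P:hautcplx}, via Lurie's $ObjDefo_X$), and produces a section of the forgetful map of derived formal groups (Lemma~\ref{L:sectionofhautPinfty}, built from the zigzag $End(X\otimes R)\leftarrow End_{id}\rightarrow End(X\otimes R)$), which by Lemma~\ref{L:splityieldssemidirect} exhibits the middle term as a homotopy semidirect product. Where you diverge is the identification with $g_{P^+,X}^{\psi^+}$: you propose a direct inspection of the convolution $L_\infty$-algebra, splitting $g_{P^+,X}^{\psi^+}$ as $g_{P,X}^{\psi}\oplus End(X)$ and matching brackets by hand, whereas the paper argues geometrically. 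It introduces the quasi-free operad $Di$ of differentials, proves $P_\infty^+\cong P_\infty\vee Di$ and that the square $Di\to P_\infty^+$, $I\to P_\infty$ is a homotopy pushout of properads (Lemma~\ref{L:pushoutDi}), deduces the fiber sequence $g_{P,X}^{\psi}\to g_{P^+,X}^{\psi^+}\to g_{Di,X}^{triv}$ with $g_{Di,X}^{triv}\cong End(X)$ (Proposition~\ref{P:Di=haut}), and then compares this fiber sequence with the one for homotopy automorphisms via a map $\Omega_{triv}\underline{Di\{X\}}\to\underline{haut}(X)$ that is an isomorphism onto the connected component of the identity. The payoff of the paper's route is precisely that it sidesteps the step you correctly identify as the delicate heart of your argument — verifying that the convolution-theoretic $End(X)$-action on $g_{P,X}^{\psi}$ agrees, including all higher $L_\infty$-brackets, with the transport-of-structure action coming from the section of the derived formal group; in the paper this agreement is extracted from a commutative square of presheaves and the two-out-of-three property at the tangent level, rather than checked term by term. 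Your route is more explicit and would yield concrete formulas (it is the one the paper effectively carries out in special cases, e.g.\ Lemma~\ref{L:gPoiss=Tam} for $Pois_n$ and Example~\ref{ex:strictassocalgebra} for associative algebras), but as written the bracket-matching step is still a plan rather than a proof, and you should expect it to be the place where most of the work concentrates.
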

The middle term of this equivalence exhibits  $Lie(\widehat{G_P(X,\varphi)})$ as a twisted semi-direct product of $g_{P,X}^{\varphi}$ with the Lie algebra $End(X)$ of endomorphisms of $X$ (equipped with the commutator of the composition product as Lie bracket). The twist $\xi$ is a degree $-1$ map $End(X)[-1]\rightarrow g_{P,X}^{\varphi}$ added to the differential of $g_{P,X}^{\varphi}\rtimes End(X)$ (see Definition \ref{D:semidirectMCtwist}, Lemmas \ref{L:LiesemidirectMCtwist} and \ref{L:semidirectMCtwist} for the construction of the twist and its properties). Let us note that twisted semi-direct products appeared originally in \cite{Tan} and were used further in \cite{Ber} to construct rational Lie models of fibrations and automorphisms of fiber bundles. To prove this Theorem, we reinterpret the deformation complex $Lie(\underline{haut}_{P_{\infty}-Alg}(X,\varphi))$ as the tangent Lie algebra of a homotopy quotient of $\underline{P_{\infty}\{X\}}$ by the $\infty$-action of $\underline{haut}(X)$ in Section ~\ref{S:TgtLie} and relate it to the action of $(-)^+$ on the properadic side in Section \ref{S:Plus}.

\smallskip

To summarize, the conceptual explanation behind this phenomenon is as follows. On the one hand, the $L_{\infty}$-algebra $g_{P,X}^{\varphi}$ controls the deformations of the $P_{\infty}$-algebra structure over a \emph{fixed complex} $X$, that is, the deformation theory of the properad morphism $\varphi$. On the other hand, we built a \emph{derived formal group} $\widehat{\underline{haut}_{P_{\infty}}(X,\varphi)}_{id}$ whose corresponding $L_{\infty}$-algebra $Lie(\widehat{\underline{haut}_{P_{\infty}}(X,\varphi)}_{id})$ describes \emph{another derived deformation problem}: an $R$-deformation of a $P$-algebra $A$ in the $\infty$-category of $P_{\infty}$-algebras \emph{up to quasi-isomorphisms} is a an $R$-linear $P_{\infty}$-algebra $\tilde{A}\simeq A\otimes R$ with a $\mathbb{K}$-linear $P_{\infty}$-algebra quasi-isomorphism $\tilde{A}\otimes_R\mathbb{K}\stackrel{\sim}{\rightarrow}A$. 
The later $L_{\infty}$-algebra admits two equivalent descriptions
\[
Lie(\widehat{\underline{haut}_{P_{\infty}}(A)}_{id})\simeq g_{P,X}^{\varphi}\ltimes_{hol}End(X)\simeq g_{P^+,X}^{\varphi^+}
\]
where the middle one exhibits this moduli problem as originating from the homotopy quotient of the space of $P_{\infty}$-algebra structures on $X$ by the homotopy action of self-quasi-isomorphisms $haut(X)$, that is, deformations of the $P_{\infty}$-algebra structure \emph{up to self quasi-isomorphisms} of $X$, and the right one encodes this as simultaneous compatible deformations of the $P_{\infty}$-algebra structure \emph{and} of the differential of $X$. We will go back to this in full details in Sections~\ref{S:TgtLie} and \ref{S:Concluding}.

For algebras over cofibrant operads, there is a further characterization of this $L_{\infty}$-algebra as a twisted semi-direct product of the twisted $L_{\infty}$-algebra of coderivations with $End(X)$ (see Corollary \ref{T:identificationBhautQFreeresol}):
\begin{cor}
Let $P_\infty=(\mathcal{F}(s^{-1}\overline{C}),\partial)\stackrel{\sim}{\rightarrow}P$ be 
a cofibrant quasi-free resolution of an operad $P$ where $C$ is a cooperad and $(X,\varphi)$ be a $P_\infty$-algebra.  
One has an equivalence of $L_{\infty}$-algebras
\[
Lie(\underline{haut}_{P_\infty}(X,\varphi))\cong Coder( \overline{C}(X[1]))^{D_{\varphi}} \rtimes_{\xi} End(X,X)
\]
where the last term is the dg Lie algebra of coderivations of the cofree coalgebra on $X[1]$ 
twisted by the Maurer Cartan element $D_{\varphi}$ (the coderivation of square zero corresponding to the $P_{\infty}$-algebra structure $\varphi$) and the action of $End(X,X)$ is given by the composition of coderivations of $C(X[1])$. The twist $\xi$ is defined by Lemma \ref{L:LiesemidirectMCtwist}.
\end{cor}

Returning to the Hochschild complex example (which we also explain in Example \ref{ex:strictassocalgebra}), we now see  the role of the weight $1$ part $\mathrm{Hom}(A,A)$. Indeed, in the case of a an associative dg algebra $A$, the complex $g_{Ass^+,X}^{\varphi^+}\cong Hom ( A^{\otimes >0}, A) [1]$  computes the reduced Hochschild cohomology of $A$, where the right hand side is a sub-complex of the standard Hochschild cochain complex shifted down by $1$ equipped with its standard Lie algebra structure.
The complex $g_{Ass,X}^{\varphi} \cong \mathrm{Hom} ( A^{\otimes >1}, A)[1]$ is the one controlling the formal moduli problem of deformations of $A$ with fixed differential\footnote{Thus, when $A$ is an ordinary,  non dg, vector space, the complex $g_{Ass,X}^{0}$ parametrizes  the moduli space of associative algebra structures on $A$, while 
$g_{Ass^+,X}^{0^+}$  parametrizes  the moduli space of asociative algebra structures up to isomorphism of algebras}, where the right hand side is the subcomplex of the previous shifted Hochschild cochain complex 
where we have removed the $Hom(A,A)$ component\footnote{there is also a third complex, the full shifted Hochschild complex 
$\mathrm{Hom} ( A^{\otimes\geq 0}, A)[1)$, which controls not the deformations of $A$ itself but the linear deformations of its dg category 
of modules $\mathrm{Mod}_A$~\cite{KellerLowen, Preygel}}.

\smallskip

In addition, in Section~\ref{S:formalgeom}, we prove a general criterion to compare formal moduli problems induced by algebras :
\begin{thm}[see Theorems \ref{T:equivhautLie} and \ref{T:equivfiberseq}]
\begin{itemize}
Let $F$ be an equivalence of presheaves of $\infty$-categories
\[
F:\underline{P_{\infty}-Alg}\stackrel{\sim}{\longrightarrow}\underline{Q_{\infty}-Alg}.
\]
Then
\item[(1)] $F$ induces an equivalence of derived formal moduli problems
\[
B_{fmp}\widehat{\underline{haut}_{P_{\infty}-Alg}(X,\varphi)}_{Id_{(X,\varphi)}}\stackrel{\sim}{\rightarrow} B_{fmp}\widehat{\underline{haut}_{Q_{\infty}-Alg}(F(X,\varphi))}_{Id_{(X,\varphi)}},
\]
equivalently an equivalence of the associated $L_{\infty}$-algebras
\[
Lie(\underline{haut}_{P_{\infty}-Alg}(X,\varphi))\stackrel{\sim}{\rightarrow}Lie(\underline{haut}_{Q_{\infty}-Alg}(F(X,\varphi))),
\]
where $F(\varphi)$ is the $Q_{\infty}$-algebra structure on the image of  $(X,\varphi)$  under $F$.

\item[(2)] if moreover $F$ commutes with the maps induced by the forgetful functors of these algebras, then it induces an equivalence of fiber sequences of derived formal moduli problems
\[
\xymatrix{
\underline{P_{\infty}\{X\}}^{\varphi}\ar[d]^-{\sim}\ar[r] & B_{fmp}\widehat{\underline{haut}_{P_{\infty}-Alg}(X,\varphi)}_{Id_{(X,\varphi)}}\ar[d]^-{\sim}\ar[r] & B_{fmp}\widehat{\underline{haut}(X)}_{Id_X}\ar[d]^-=\\
\underline{Q_{\infty}\{F(X)\}}^{F(\varphi)}\ar[r]  & B_{fmp}\widehat{\underline{haut}_{Q_{\infty}-Alg}(F(X,\varphi))}_{Id_{(X,\varphi)}}\ar[r] & B_{fmp}\widehat{\underline{haut}(X)}_{Id_X}
}.
\]
\end{itemize}
\end{thm}

In Section~\ref{S:Examples}, we apply our machinery to   derived deformation theory of $n$-shifted Poisson algebras (that is Poisson algebras with a Poisson bracket of degree $1-n$) and $E_n$-algebras:
\begin{thm}[See Corollary~\ref{C:Tamarkin=Defo}]

(1) The \emph{Tamarkin deformation complex\footnote{which we denote $CH_{Pois_n}^{(\bullet>0)}(A)[n]$  since it is the part of positive weight in the full Poisson complex~\cite{CaWi}}}~\cite{Ta-deformationofd-algebra}  \emph{controls deformations
of $A$ in $Pois_{n,\infty}-Alg[W_{qiso}^{-1}]$, that is, in homotopy dg-$Pois_n$-algebras up to quasi-isomorphisms}. It is thus equivalent to the tangent Lie algebra $g_{Pois_n^+,A}^{\varphi^+}$ of $G_{Pois_n}(A)$.

(2) For $n\geq 2$ the Tamarkin deformation complex of $A$ is equivalent, as an $L_{\infty}$-algebra, to the $E_n$-tangent complex of $A$ seen as an $E_n$-algebra via the formality of $E_n$-operads.
\end{thm}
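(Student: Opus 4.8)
The plan is to deduce both statements from the general machinery established earlier, specialized to the Poisson operad, the genuinely computational heart being the identification of the relevant ``plus'' deformation complex with Tamarkin's complex.

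For part (1), I would first apply Theorem~\ref{T:MainThmIntro2} with $P=Pois_n$ and $(X,\psi)=A$ a $Pois_n$-algebra (equivalently a $Pois_{n,\infty}$-algebra via a cofibrant resolution). This immediately yields the equivalence of $L_\infty$-algebras
\[
Lie(\widehat{G_{Pois_n}(A)}) \simeq g_{Pois_n,A}^{\psi} \rtimes^h End(A) \simeq g_{Pois_n^+,A}^{\psi^+},
\]
so that the tangent Lie algebra of $G_{Pois_n}(A)$ controls deformations of $A$ in $Pois_{n,\infty}-Alg[W_{qiso}^{-1}]$. The remaining step is to identify $g_{Pois_n^+,A}^{\psi^+}$ with the Tamarkin complex $CH_{Pois_n}^{(\bullet>0)}(A)[n]$. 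For this I would unwind the explicit description of the plus construction from \S\ref{S:Plus} and track the weight grading exactly as in the motivating Hochschild example recalled in the introduction: there $g_{Ass^+,A}^{\psi^+}\cong Hom(A^{\otimes>0},A)[1]$ (weight $\geq 1$) and $g_{Ass,A}^{\psi}\cong Hom(A^{\otimes>1},A)[1]$ (weight $\geq 2$). Here the full Poisson complex carries a weight grading starting at $0$; the classical complex $g_{Pois_n,A}^{\psi}$ is its part of weight $>1$, and passing to $Pois_n^+$ precisely restores the missing weight-one summand $End(A)=Hom(A,A)$, leaving the part of positive weight $\bullet>0$, which is Tamarkin's complex. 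The matching of $L_\infty$-brackets is then inherited from the homotopy semidirect product description, establishing (1).

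For part (2), I would invoke the formality of the $E_n$-operads, which for $n\geq 2$ over a field of characteristic zero furnishes an equivalence of operads $C_*(E_n;\mathbb{K})\simeq Pois_n$, hence an equivalence of $\infty$-categories of algebras. The crucial point is that this can be promoted to an equivalence of presheaves of $\infty$-categories
\[
\underline{E_n-Alg} \stackrel{\sim}{\longrightarrow} \underline{Pois_{n,\infty}-Alg}
\]
over $CDGA_{\mathbb{K}}$, since formality is an operadic statement compatible with base change and thus holds in families of $A$-modules. I would then apply the comparison criterion Theorem~\ref{T:equivfiberseq} to $F$ equal to this formality equivalence: it identifies the fiber sequence computing the $Pois_n$-deformation theory of $A$ with the one computing the $E_n$-deformation theory of the corresponding $E_n$-algebra, whence an equivalence of the associated tangent $L_\infty$-algebras. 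Combined with part (1), this exhibits the $E_n$-tangent complex of $A$ — defined as the tangent Lie algebra of the formal moduli problem of deformations of $A$ in $E_n-Alg$ up to quasi-isomorphism — as equivalent, as an $L_\infty$-algebra, to the Tamarkin complex.

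The main obstacle I anticipate lies in part (1): making the weight-graded identification of $g_{Pois_n^+,A}^{\psi^+}$ with Tamarkin's complex fully precise, including the correct degree shift by $n$ and the matching of the \emph{$L_\infty$-structure}, not merely the underlying complex. One must check that the plus construction on the Poisson operad introduces no extra generators beyond the weight-one endomorphisms, and that Tamarkin's bracket agrees with the one coming from $g_{Pois_n,A}^{\psi}\rtimes^h End(A)$. A secondary, more technical obstacle in part (2) is ensuring that the formality equivalence is genuinely functorial in the cdga parameter $R$ so that Theorem~\ref{T:equivfiberseq} applies; in particular, the dependence on a choice of Drinfeld associator must be shown not to affect the resulting equivalence class of tangent $L_\infty$-algebras.
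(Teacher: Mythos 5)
Your proposal is correct and follows essentially the same route as the paper: part (1) is Theorem~\ref{T:Def+=hAut} combined with the identification $g_{Pois_n^+,A}^{\psi^+}=CH_{Pois_n}^{(\bullet>0)}(A,A)$, which the paper carries out in Lemma~\ref{L:gPoiss=Tam} by exactly the weight-graded computation you sketch (the plus construction adds only the arity-one generator, restoring the weight-one summand $Hom(A,A)$, and the convolution brackets then agree on the nose). Part (2) is likewise obtained, as you propose, by transporting the fiber sequence along the formality equivalence via Theorem~\ref{T:equivfiberseq} and invoking the identification of $Lie(\underline{haut}_{E_n}(A))$ with the tangent complex $T_A$ from Corollary~\ref{L:gE2+=TA}.
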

To the best of the authors knowledge, the proof that this complex is indeed a deformation complex in the precise meaning of formal derived moduli problems is new, as well as the concordance with the $L_{\infty}$-structure induced by the higher Deligne conjecture (which provides an $E_{n+1}$-algebra structure on the $E_n$-tangent complex of an $E_n$-algebra).
We also prove that the deformation complex $g_{Pois_n,A}^{\varphi}$ of the formal moduli problem $\underline{{Pois_{n}}_{\infty} \{A \}}^{\varphi}$ of homotopy $n$-Poisson algebra structures deforming $\varphi$ is given by the $L_\infty$-algebra  $CH_{Pois_n}^{(\bullet>1)}(A)[n]$, which is a further truncation of $CH_{Pois_n}(A)[n]$.

Concerning bialgebras, we obtain the first theorem describing precisely why (a suitable\footnote{there are several closely related versions of the Gerstenhaber-Schack, depending on how we truncate them} version of) the Gerstenhaber-Schack complex 
\[C_{GS}^*(B,B)\cong \prod_{m,n\geq 1}Hom_{dg}(B^{\otimes m},B^{\otimes n})[-m-n]\] is the appropriate deformation complex of a dg bialgebra \emph{up to quasi-isomorphisms}  in terms of derived moduli problems:
\begin{thm}[See Theorem~\ref{T:GSIdentific}]
The Gerstenhaber-Schack complex is quasi-isomorphic to the $L_{\infty}$-algebra controlling the deformations of dg bialgebras up to quasi-isomorphisms:
$$C_{GS}^*(B,B) \cong  g_{Bialg_{\infty}^+,B}^{\varphi^+} \simeq Lie(\widehat{\underline{haut}_{Bialg_{\infty}}(B)}_{id}).$$
\end{thm}
Note that our theorem~\ref{T:MainThmIntro2} implies that the $L_\infty$-algebra structure induced on $C_{GS}^*(B,B)$ contains as a sub $L_\infty$-algebra the Merkulov-Vallette deformation complex~\cite{MV2}.

\smallskip 

Finally, in Section~\ref{S:Concluding}, we give an overview and comparison of  various (derived or not) deformation problems of algebraic structures arising in our work and the litterature.

\begin{rem}
A natural candidate for the deformation $\infty$-functor of a $P_{\infty}$-algebra $A$ in the $\infty$-category of $P_{\infty}$-algebras (localized with respect to quasi-isomorphisms) is defined as follows. One associate, to any augmented artinian dg algebra $R$, the simplicial nerve $\mathcal{N}wP_{\infty}-Alg(Mod_R)$ of the subcategory of weak equivalences of $P_{\infty}$-algebras in $R$-modules. The augmentation $R\rightarrow\mathbb{K}$ induces a simplicial map
\[
\mathcal{N}wP_{\infty}-Alg(Mod_R)\rightarrow \mathcal{N}wP_{\infty}-Alg(Ch_{\mathbb{K}}).
\]
The evaluation of the classifying presheaf of deformations of $A$ on an augmented artinian dg algebra $R$ is the homotopy fiber of the map above at the base point $A$. In other words, it is the formal completion $\widehat{\underline{\mathcal{N}wP_{\infty}-Alg}}_A$ of the functor $R\mapsto \mathcal{N}wP_{\infty}-Alg(Mod_R)$ at $A$. We detail its construction and properties in Section \ref{SS:DKSpace}.  In the \emph{operadic} setting, such a functor has been studied by Hinich in \cite{Hin3}.
 
We have
\[
\widehat{\underline{\mathcal{N}wP_{\infty}-Alg}}_A(R)\simeq P_{\infty}-Alg(Mod_R)[W_{qiso}^{-1}]\times^h_{P_{\infty}-Alg(Ch_{\mathbb{K}})[W_{qiso}^{-1}]}\{A\}
\]
It is the space of $R$-linear $P_{\infty}$-algebras $B$ such that $B\otimes_R\mathbb{K}\simeq A$, so it encapsulates the whole deformation theory of $A$ in the $\infty$-category $P_{\infty}-Alg[W_{qiso}^{-1}]$ as we can think of it, that is, an $R$-deformation of $A$ is an $R$-linear $P_{\infty}$-algebra whose restriction modulo $R$ is quasi-isomorphic to $A$, and equivalences between $R$-deformations are defined by compatible $R$-linear quasi-isomorphisms whose restriction modulo $R$ is homotopic to $Id_A$. However, in general, such a simplicial presheaf \emph{\textbf{does not provide a derived formal moduli problem}}. Even in the operadic case, one needs $A$ and $P$ to be non-positively graded to describe it as the nerve of dg Lie algebra (see \cite[Section 4.3]{Hin3}).

The relationship between this classifying presheaf of algebras and the derived formal group of homotopy automorphisms in the neighbourhood of the identity is given by
\[
\widehat{\underline{haut}_{P_{\infty}}(A)_{id}} = \Omega_*\widehat{\underline{\mathcal{N}wP_{\infty}-Alg}}_{A}
\]
where $\Omega_*$ is the loop space for pointed functors as explained in Section~\ref{S:formalgeom}.
By the general formalism explained in Section~\ref{S:formalgeom} we have
\[
\mathbb{T}_{\widehat{\underline{haut}_{P_{\infty}}(A)_{id}}} = Lie(L(\widehat{\underline{\mathcal{N}wP_{\infty}-Alg}}_{A}))
\]
where $L$ is the completion of $\widehat{\underline{\mathcal{N}wP_{\infty}-Alg}}_{A}$ in a derived formal moduli problem.
Another way to state this is that in general \emph{$\widehat{\underline{\mathcal{N}wP_{\infty}-Alg}}_{A}$ is a $1$-proximate moduli functor in the sense of \cite{Lur0}}.
\end{rem}

\subsection{Further applications and perspectives}

A first major application appeared earlier in our preprint \cite{GY}, where some of the results of the present article were announced. Our article provides complete proofs of these results and add some new ones as well. In \cite{GY},
we use them crucially to prove longstanding conjectures in deformation theory of bialgebras and $E_n$-algebras as well as in deformation quantization. We prove a conjecture stated by Gerstenhaber and Schack (in a wrong way) in 1990 \cite{GS}, whose correct version is that the Gerstenhaber-Schack complex forms an $E_3$-algebra, hence unraveling the full algebraic structure of this complex which remained mysterious for a while. It is a ``differential graded bialgebra version'' of the famous Deligne conjecture for associative differential graded algebras (see for instance \cite{Tam1} and \cite{Ko1}). The second one, enunciated by Kontsevich in his celebrated work on deformation quantization of Poisson manifolds \cite{Ko2} in 2000, is the formality, as an $E_3$-algebra, of the deformation complex of the symmetric bialgebra which should imply as a corollary Drinfeld's and Etingof-Kazdhan's deformation quantization of Lie bialgebras (see \cite{Dri}, \cite{EK1} and \cite{EK2}). We solve both conjectures actually at a greater level of generality than the original statements. Moreover, we deduce from it a generalization of Etingof-Kadhan's celebrated deformation quantization in the homotopical and differential graded setting.

The new methods developed here to approach deformation theory and quantization problems have several possible continuations.
In particular, we aim to investigate in future works how our derived algebraic deformation theory could be adapted to provide new deformation theoretic approach, formality statements and  deformation quantization of shifted Poisson structures in derived algebraic geometry. This problem can help understand quantum invariants of various moduli spaces of $G$-bundles over algebraic varieties and topological manifolds, which are naturally shifted Poisson stacks.

Moreover, our framework for derived algebraic deformation theory shall also be useful to study deformation problems related to the various kinds of (bi)algebras structures mentionned in this introduction, occuring in mathematical physics, algebraic topology, string topology, symplectic topology and so on.

\medskip
\noindent \textbf{Acknowledgement.} The authors wish to thank V. Hinich, S. Merkulov, P. Safranov and T. Willwacher for their useful comments. 
They were also partially supported by ANR grants CHroK and CatAG and the first author benefited from the support of Capes-Cofecub project 29443NE and Max Planck Institut fur Mathematik in Bonn as well.

\section*{Notations and conventions}

The reader will find below a list of the main notations used at several places in this article.
\begin{itemize}
\item We work over a field of characteristic zero denoted $\mathbb{K}$.

\item We work with cochain complexes and a cohomological grading and denote
 $Ch_{\mathbb{K}}$  the category of $\mathbb{Z}$-graded \emph{cochain} complexes over $\mathbb{K}$.

\item Let $(\mathcal{C},W_{\mathcal{C}})$ be a relative category, also called a category with weak equivalences. Meaning $\mathcal{C}$ is a category and $W_{\mathcal{C}}$ its subcategory of weak equivalences. The \emph{hammock localization} (see \cite{DK2}) of such a category with respect to its weak equivalences is denoted $L^H(\mathcal{C},W_{\mathcal{C}})$, and the \emph{mapping spaces} of this simplicial localization are noted $L^H(\mathcal{C},W_{\mathcal{C}})(X,Y)$.

\item Given a relative category $(M,W)$, we denote by $M[W^{-1}]$ its $\infty$-categorical localization.
Further, we will write $\mathcal{N}W$ for the coherent nerve of the subcategory of weak-equivalences $W$.

\item We will write ``cdga'' to shorten ``commutative differential graded algebra''.

\item Given a cdga $A$, the category of $A$-modules is noted $Mod_A$. More generally, if $\mathcal{C}$ is a symmetric monoidal category tensored over $Ch_{\mathbb{K}}$, the category of $A$-modules in $\mathcal{C}$ is noted $Mod_A(\mathcal{C})$.

\item Several categories of algebras and coalgebras over $\mathbb{K}$ will have a dedicated notation: 
$cdga_{\mathbb{K}}$ for the category of commutative differential graded algebras, $dgArt_{\mathbb{K}}$ for the category of Artinian cdgas, $dgCog_{\mathbb{K}}$ for the category of dg coassociative coalgebras and $dgLie_{\mathbb{K}}$ for the category of dg Lie algebras.

\item We will us the upper script ``aug'' for categories of augmented algebras.

\item Given a dg Lie algebra $g$, its Chevalley-Eilenberg algebra is noted $C^*_{CE}(g)$ and its Chevalley-Eilenberg coalgebra is noted $C_*^{CE}(g)$.

\item Semi-direct products of dg Lie algebras as well as homotopy semi-direct products of $L_{\infty}$-algebras will be denoted by $g\rtimes h$, where $h$ is acting on $g$. Twisted semi-direct products will be denoted by $\rtimes_{\xi}$.

\item More general categories of algebras and coalgebras over operads or prop(erad)s will have the following generic notations:
given a prop(erad) $P$, we will note $P-Alg$ the category of dg $P$-algebras and given an operad $P$ we will denote by $P-Cog$ the category of dg $P$-coalgebras.

\item Given a prop(erad) $P$, a \emph{cofibrant resolution} of $P$ is noted $P_{\infty}$.

\item The bar and cobar functors for (pr)operads will be respectively denoted by $\mathcal{B}$ and $\mathcal{B}^{co}$ (although the notation $\Omega$ is more often used for the cobar functor in the litterature, it will be already used in this paper for various loop space objects).

\item When the base category is a symmetric monoidal category $\mathcal{C}$ other than $Ch_{\mathbb{K}}$, we denote by $P-Alg(\mathcal{C})$ the category of $P$-algebras in $\mathcal{C}$ and $P-Cog(\mathcal{C})$ the category of $P$-coalgebras in $\mathcal{C}$.

\item Algebras over properads form a relative category for the weak equivalences defined by chain quasi-isomorphisms. The subcategory of \emph{weak equivalences of $P-Alg$} is noted $wP-Alg$.

\item Given a properad $P$ and a complex $X$, we will consider an associated convolution Lie algebra noted $g_{P,X}$ which will give rise to two deformation complexes: the deformation complex $g_{P,X}^{\varphi}$ controling the formal moduli problem of deformations of a $P$-algebra structure $\varphi$ on $X$, and a variant $g_{P^+,X}^{\varphi^+}$ whose role will be explained in Section 3.

\item We will consider various moduli functors in this paper, defined as simplicial presheaves over the opposite category of artinian augmented cdgas: the simplicial presheaf of $P_{\infty}$-algebra structures on $X$ noted $\underline{P_{\infty}\{X\}}$, the formal moduli problem of deformations of a given $P_{\infty}$-algebra structure $\varphi$ on $X$ noted $\underline{P_{\infty}\{X\}}^{\varphi}$, and the derived prestack group of homotopy automorphisms of $(X,\varphi)$ noted $\underline{haut}_{P_{\infty}}(X,\varphi)$.
The derived prestack group of automorphisms of $X$ as a chain complex will be denoted $\underline{haut}(X)$.
More generally, when we extend a simplicial set $K$ into a simplicial presheaf over Artinian cdgas, we will use the underlined notation $\underline{K}$ for that extension.




\end{itemize}

\section{Formal moduli problems and algebraic structures}
Formal moduli problems arise when one wants to study the deformation theory of an object in a category, of a structure on a given object,
of a point in a given moduli space (variety, scheme, stack, derived stack). The general principle of moduli problems is that the deformation theory
of a given point in its formal neighbourhood (that is, the formal completion of the moduli space at this point) is controlled by a certain tangent
dg Lie algebra.



However, if one does not work in a derived geometric/higher categorical context, there are several well known issues with this principle: 
\begin{itemize}
\item[$\bullet$] 
 Lie algebras which are not quasi-isomorphic can nevertheless describe the same moduli problem (a famous example is the deformation theory of a closed subscheme, seen either as a point of a Hilbert scheme or as a point of a Quot scheme). Even worse, their is no systematic recipe to build a Lie algebra out of a moduli problem;

\item[$\bullet$] Deformation problems for which the equivalence relation is given by weak equivalences (say, quasi-isomorphisms between two deformations of a dg algebra) do not fit in the framework of classical algebraic geometry (that is, deformations which manifests a non trivial amount of homotopy theory);

\item[$\bullet$] There is no natural interpretation of the obstruction theory in terms of the corresponding moduli problem.
\end{itemize}
To overcome these difficulties encountered when working in underived deformation theory, in particular in the correspondence between deformation functors and dg Lie algebras, one has to consider moduli problems in a derived setting.
The rigorous statement of an equivalence between (derived) formal moduli problems and dg Lie algebras was proved independently by Lurie in \cite{Lur0}
and by Pridham in \cite{Pri}.
In this paper, what \emph{we will call moduli problems are actually derived moduli problems}.


\subsection{Formal moduli problems and (homotopy) Lie algebras}

We start by an overview of the notion of formal moduli problem and relevant $L_\infty$-algebras.


\begin{defn} \label{D:FMP}Formal moduli problems are functors $F:dgArt_{\mathbb{K}}^{aug}\rightarrow sSet$ 
from augmented Artinian commutative differential graded algebras to simplicial sets satisfying the following conditions:
\begin{itemize}
\item[(1)] The functor $F$ preserves weak equivalences (that is, quasi-isomorphisms of cdgas are sent to weak equivalences of simplicial sets);

\item[(2)] There is a weak equivalence $F(\mathbb{K})\simeq pt$; 

\item[(3)] The functor $F$ is \textbf{infinitesimally cohesive}: 
Given any (homotopy) pullback diagram $\xymatrix{ A' \ar[r] \ar[d] & A \ar[d] \\ 
B' \ar[r] & B}  $ in $dgArt_{\mathbb{K}}$ such that the induced maps $\pi_0(A) \to \pi_0(B)$ and $\pi_0(B') \to \pi_0(B)$ are surjective, the induced 
diagram \[ \xymatrix{ F(A') \ar[r] \ar[d] & F(A) \ar[d] \\ 
F(B') \ar[r] & F(B)} \] is a (homotopy) pullback in $sSet$.
\end{itemize}
\end{defn}

\begin{rem} Condition (3) is 
 a derived version of the classical \emph{Schlessinger condition} introduced in \cite{Lur0} and developped in~\cite{Lur02} 
 The notion of (infinitesimally) cohesive generalizes to any functor from connective dg-commutative algebras to $sSet$. In that general setting functors 
 satisfying condition (3) in Definition~\ref{D:FMP} are called cohesive, while infinitesimally cohesive stands for those functors 
 satisfying this condition only when the 
 the maps   $\pi_0(A) \to \pi_0(B)$ and $\pi_0(B') \to \pi_0(B)$ are further required to  have nilpotent kernels. 
 For Artinian cdgas, the latter condition is automatic and therefore infinitesimally cohesive and cohesive are the same. We stick to the longer name 
 to recall that special property of the Artinian context.
\end{rem}
\begin{rem}
Often throughout the paper, we call our functors from artinian cdgas or cdgas to simplicial sets \emph{simplicial presheaves over the opposite category of...} as a reminiscence of the functor of points approach in algebraic and derived algebraic geometry, where geometric objects represent presheaves on a category of (possibly affine, possibily derived) $\mathbb{K}$-schemes (which in the affine case is equivalent to the opposite category of commutative $\mathbb{K}$-algebras).
\end{rem}
The value $F(\mathbb{K})$ corresponds to the point of which we study the formal neighbourhood, the evaluation $F(\mathbb{K}[t]/(t^2))$ 
on the algebra of dual numbers encodes infinitesimal deformations of this point, and the $F(\mathbb{K}[t]/(t^n))$ 
are polynomial deformations of a higher order, for instance.

\smallskip

Formal moduli problems form a full sub-$\infty$-category noted $FMP_{\mathbb{K}}$ 
of the $\infty$-category of simplicial presheaves over the opposite category of augmented Artinian cdgas. By \cite[Theorem 2.0.2]{Lur0}, 
this $\infty$-category is equivalent to the $\infty$-category $dgLie_{\mathbb{K}}$ of dg Lie algebras. 
Moreover, one side of the equivalence is made explicit, and is equivalent to the nerve construction of dg Lie algebras studied thoroughly 
by Hinich in \cite{Hin0}. The homotopy invariance of this nerve relies on nilpotence conditions on the dg Lie algebra.
In the case of formal moduli problems, this nilpotence condition is always satisfied because one tensors the Lie algebra with the maximal ideal 
of an augmented Artinian cdga.

\smallskip

In practice it is often convenient to work with homotopy Lie algebras, that is, $L_{\infty}$-algebras, rather than strict dg-Lie algebras:
\begin{defn}\label{D:Linfty}
(1) An $L_{\infty}$-algebra is a graded vector space $g=\{g_n\}_{n\in\mathbb{Z}}$ equipped with maps
$l_k:g^{\otimes k}\rightarrow g$ of degree $2-k$, for $k\geq 1$, satisfying the following properties:
\begin{itemize}
\item $l_k(...,x_i,x_{i+1},...)=-(-1)^{|x_i||x_{i+1}|}l_k(...,x_{i+1},x_i,...)$
\item for every $k\geq 1$, the generalized Jacobi identities
\[
\sum_{i=1}^k\sum_{\sigma\in Sh(i,k-i)}(-1)^{\epsilon(i)}l_k(l_i(x_{\sigma(1)},...,x_{\sigma(i)}),x_{\sigma(i+1)},...,x_{\sigma(k)})=0
\]
where $\sigma$ ranges over the $(i,k-i)$-shuffles and
\[
\epsilon(i) = i+\sum_{j_1<j_2,\sigma(j_1)>\sigma(j_2)}(|x_{j_1}||x_{j_2}|+1).
\]
\end{itemize}
It is standard that the above definition is equivalent to the following:

(2) An $L_{\infty}$-algebra structure on a graded vector space $g=\{g_n\}_{n\in\mathbb{Z}}$ is exactly the data of a
coderivation $Q:Sym^{\bullet\geq 1}(g[1])\rightarrow Sym^{\bullet\geq 1}(g[1])$ of degree $1$ of the cofree cocommutative coalgebra 
$ Sym^{\bullet\geq 1}(g[1])$ such that $Q^2=0$.
\end{defn}
The bracket $l_1$ is in particular a differential that makes $g$ a cochain complex. 
The dg-coalgebra of (2) is called the reduced \emph{Chevalley-Eilenberg chain complex} of the $L_\infty$-algebra $g$, denoted  $\overline{C}^{CE}_*(g)$.
The dg-algebra $\overline{C}_{CE}^*(g)$ obtained by dualizing the dg coalgebra of (2) is called the (reduced) \emph{Chevalley-Eilenberg cochain algebra} 
of $g$.

\smallskip

\begin{defn}\label{D:filteredandcomplteLinfty} A $L_{\infty}$ algebra $g$ is \emph{filtered} if it admits a decreasing filtration
\[
g=F_1g\supseteq F_2g\supseteq...\supseteq F_rg\supseteq ...
\]
compatible with the brackets: for every $k\geq 1$,
\[
l_k(F_rg,g,...,g)\in F_rg.
\]
We require moreover that for every $r$, there exists an integer $N(r)$ such that $l_k(g,...,g)\subseteq F_rg$
for every $k>N(r)$.

A filtered $L_{\infty}$ algebra $g$ is \emph{complete} if the canonical map $g\rightarrow lim_rg/F_rg$ is an isomorphism.
\end{defn}
In particular a nilpotent $L_\infty$-algebra is complete and, if $\mathfrak{m}$ is the augmentation ideal of an Artinian cdga, then $g\otimes \mathfrak{m}$ is also complete for any $L_\infty$-algebra $g$.  

\smallskip

The completeness of a $L_{\infty}$ algebra allows to define properly the notion of Maurer-Cartan element:
\begin{defn}
(1) Let $g$ be a complete $L_{\infty}$-algebra and $\tau\in g^1$, we say that $\tau$ is a Maurer-Cartan element of $g$ if
\[
\sum_{k\geq 1} \frac{1}{k!} l_k(\tau,...,\tau)=0.
\]
The set of Maurer-Cartan elements of $g$ is noted $MC(g)$.

(2) The simplicial Maurer-Cartan set is then defined by
\[
MC_{\bullet}(g)=MC(g\hat{\otimes}\Omega_{\bullet}),
\],
where $\Omega_{\bullet}$ is the Sullivan cdga of de Rham polynomial forms on the standard simplex $\Delta^{\bullet}$ (see~\ref{SS:CDGA} and \cite{Sul})
and $\hat{\otimes}$ is the completed tensor product with respect to the filtration induced by $g$.
\end{defn}
The simplicial Maurer-Cartan set is a Kan complex, functorial in $g$ and preserves quasi-isomorphisms of complete $L_{\infty}$-algebras.
The \emph{Maurer-Cartan moduli set} of $g$ is $\mathcal{MC}(g)=\pi_0MC_{\bullet}(g)$: it is the quotient of the set
of Maurer-Cartan elements of $g$ by the homotopy relation defined by the $1$-simplices.
When $g$ is a \emph{complete dg Lie algebra}, it turns out that this homotopy relation is equivalent to the action of the gauge
group $exp(g^0)$ (a prounipotent algebraic group acting on Maurer-Cartan elements), so in this case
this moduli set coincides with the one usually known for Lie algebras.
We refer the reader to \cite{Yal2} for more details about all these results.

Let us explain briefly why Lurie's equivalence \cite[Theorem 2.0.2]{Lur0}  lifts from the $\infty$-category of dg Lie algebras $dgLie_{\mathbb{K}}$ 
to the $\infty$-category of $L_{\infty}$-algebras $L_{\infty}-Alg$.
Let $p:L_{\infty}\stackrel{\sim}{\rightarrow}Lie$ be the cofibrant resolution of the operad $Lie$ encoding $L_{\infty}$-algebras. 
This morphism induces a functor $p^*:dgLie_{\mathbb{K}}\rightarrow L_{\infty}-Alg$ which associates to any dg Lie algebra the $L_{\infty}$-algebra 
with the same differential, the same bracket of arity $2$ and trivial higher brackets in arities greater than $2$. 
This functor is a right Quillen functor belonging to a Quillen equivalence
\[
p_{!}:L_{\infty}-Alg\leftrightarrows dgLie_{\mathbb{K}} :p^*,
\]
since $p$ is a quasi-isomorphism of $\Sigma$-cofibrant operads (see \cite[Theorem 16.A]{Fre3}).
Quillen equivalences induce equivalences of the $\infty$-categories associated to these model categories. Therefore, 
we get a commutative triangle of equivalences of $\infty$-categories (the model of quasicategories is used in \cite{Lur1}, but actually any model works)
\[
\xymatrix{
L_{\infty}-Alg\ar[dr]^-{\tilde{\psi}}\ar[d]_-{p_{!}} & \\
dgLie_{\mathbb{K}} \ar[r]_-{\psi} & FMP_{\mathbb{K}}
}
\]
where $\psi$  maps a dg Lie algebra $g$ to the $\infty$-functor $Map_{cdga^{aug}}(C^*_{CE}(g),-)$, which is the derived mapping space in the $\infty$-category of augmented cdgas whose source is the (completed) Chevalley-Eilenberg algebra of $g$. Note that one can also consider $Map_{dgLie}(\mathcal{D}(-),g)$, where $\mathcal{D}:cdga^{aug}\rightarrow dgLie^{op}$ is the left adjoint of $C^*_{CE}(-)$ \emph{as an $\infty$-functor} (the \emph{strict} functor $CE$ does not admit such an adjoint).
\begin{cor}\label{C:LieFMP}
The $\infty$-functor $\tilde{\psi}$ admits an inverse which associates to a formal moduli problem $F$ an $L_{\infty}$-algebra that we denote by $\mathfrak{L}_F$.
\end{cor}
The Maurer-Cartan space functor $dgArt_{\mathbb{K}}^{aug} \ni R\mapsto MC_\bullet(g\otimes \mathfrak{m}_R)$ (where $\mathfrak{m}_R$ is the maximal ideal of $R$) is not sufficiently well behaved to be used in Lurie's proof, however, it is pointwise weakly equivalent to the aforementioned one.
Moreover, for any $L_{\infty}$-algebra $g$ we have
\begin{eqnarray*}
MC_\bullet(g\otimes \mathfrak{m}_R) & \simeq & MC_\bullet(p^*\mathbb{L}p_{!}g\otimes \mathfrak{m}_R)\\
 & = & MC_\bullet(\mathbb{L}p_{!}g\otimes \mathfrak{m}_R) \\
 & \simeq & MC_\bullet(p_{!}g\otimes \mathfrak{m}_R) \\
 & \simeq & Map_{cdga^{aug}}(CE(p_{!}g),R) \\
 & = & \tilde{\psi}(g).
\end{eqnarray*}
The first equivalence holds because the derived unit of the Quillen equivalence $(p_{!},p^*)$ is a quasi-isomorphism (notice here that we do not need to derive $p^*$ because all dg Lie algebras are already fibrant), the second because the Maurer-Cartan equation for an $L_{\infty}$-algebra with trivial higher brackets gives back the Maurer-Cartan equation of a dg Lie algebra, the third because the cofibrant resolution is a quasi-isomorphism.

The underlying complex of $\mathfrak{L}_F$ can be computed up to quasi-isomorphism by the shifted tangent complex of $F$. To define it, one starts by considering, for any integer $n$, a homotopy pullback of augmented Artinian cdgas
\[
\xymatrix{
\mathbb{K}\oplus\mathbb{K}[n]\ar[d]\ar[r] & \mathbb{K}\ar[d] \\
\mathbb{K}\ar[r] & \mathbb{K}\oplus\mathbb{K}[n+1]
}
\]
where $\mathbb{K}\oplus\mathbb{K}[n]$ is the square zero extension of $\mathbb{K}$ by $\mathbb{K}[n]$.
Such a square satisfies the conditions required to apply the infinitesimal cohesiveness property of $F$, and moreover $F(\mathbb{K})$ is contractible, hence inducing a weak equivalence of simplicial sets
\[
F(\mathbb{K}\oplus\mathbb{K}[n])\overset{\sim}{\rightarrow}pt\times_{\mathbb{K}\oplus\mathbb{K}[n+1]}^hpt\simeq \Omega_*F(\mathbb{K}\oplus\mathbb{K}[n+1]).
\]
We recognize here the structure of an $\Omega$-spectrum $\mathcal{T}_F$.
\begin{defn} \label{D:TgtcplxFMP}
The tangent complex of a formal moduli problem $F$ is the complex $\mathbb{T}_F$ associated to the $\Omega$-spectrum $\mathcal{T}_F$.
\end{defn}
The second part of \cite[Theorem 2.0.2]{Lur0} reads:
\begin{prop}
For any formal moduli problem $F$, we have a natural (in $F$) equivalence of complexes $\mathfrak{L}_F\simeq \mathbb{T}_F[-1]$.
\end{prop}

Let us say a word about \emph{formal deformations}. Although the ring of formal power series in one variable $\mathbb{K}[[t]]$ is not Artinian, given a formal moduli problem $F$, one can properly define the notion of formal deformation, or deformation over $\mathbb{K}[[t]]$, by setting
\[
F(\mathbb{K}[[t]]) \; := \;\mathop{lim}\limits_{i}\, F(\mathbb{K}[t]/t^i).
\]
(where we consider a homotopy limit in the $\infty$-category of simplicial sets).
By~\cite[Corollaire 2.11]{To-Bourbaki} (or \cite{Lur0}), there is an natural weak-equivalence
\[
F(\mathbb{K}[[\hbar]])  \; \simeq \; Map(\mathbb{K}[-1], \mathfrak{L}_F)
\]
where $\mathbb{K}[-1]$ is the one dimensional Lie algebra concentrated in degree $1$ with trivial Lie bracket. 
Here $Map$ denotes the derived mapping space in the $\infty$-category of dg Lie algebras, which can be explicited 
in the corresponding model category by taking a cofibrant resolution of $\mathbb{K}[-1]$. We refer the reader to \cite[Section 1.1]{To-Bourbaki} 
for example, to see an explicit construction of such a cofibrant resolution. The main point of interest for us here, 
is that the space of Lie morphisms
from such a resolution is equivalent to the space of Maurer-Cartan elements in formal power series without constant terms, that is
\[
F(\mathbb{K}[[t]])\simeq MC_{\bullet}(t \mathfrak{L}_F[[t]]).
\]
This means that formal deformations of a point can be explicitely described in terms of the corresponding Lie algebra. 

To conclude, we also recall  the notion of 
\begin{defn}[Twisting by a Maurer-Cartan element]\label{D:twistingMC}
The twisting of a complete $L_{\infty}$ algebra $g$ by a Maurer-Cartan element $\tau$ is the complete $L_{\infty}$ algebra $g^{\tau}$
with the same underlying graded vector space and new brackets $l_k^{\tau}$ defined by
\[
l_k^{\tau}(x_1,...,x_k)=\sum_{i\geq 0}\frac{1}{i!}l_{k+i}(\underbrace{\tau,...,\tau}_i,x_1,...,x_k)
\]
where the $l_k$ are the brackets of $g$.\end{defn}
Morphisms of $L_{\infty}$-algebras can be twisted as well: for any such morphism $f:g\rightarrow h$ and any Maurer-Cartan element $\tau$ of $g$, $f$ induces a morphism of $L_{\infty}$-algebras $f:g^{\tau}\rightarrow h^{f(\tau)}$. Moreover, if $f$ is a filtered quasi-isomorphism then so does its twisted version, see for example \cite[Proposition 3.8]{Yal1}.

\subsection{Moduli spaces of algebraic structures and their formal moduli problems}\label{SS:DefALgStructClassic}
We now explain  a first prop(erad)ic approach to moduli of algebraic structures. 

Moduli spaces of algebraic structures were originally defined by Rezk as simplicial sets, in the setting of simplicial \emph{operads} \cite{Rez}.
This notion can be extended to algebras over differential graded \emph{props} as follows (see \cite{Yal2}):
\begin{defn}\label{D:mappingspace}
Let $P_{\infty}$ be a cofibrant prop and $X$ be a complex. The \emph{moduli space of $P_{\infty}$-algebra structures} on $X$ is the simplicial set $P_{\infty}\{X\}$ defined by
\[
P_{\infty}\{X\} = Mor_{Prop}(P_{\infty},End_X\otimes\Omega_{\bullet}),
\]
where the prop $End_X\otimes\Omega_{\bullet}$ is defined by $$(End_X\otimes\Omega_{\bullet})(m,n)=End_X(m,n)\otimes\Omega_{\bullet}$$ and $\Omega_{\bullet}$ 
is the Sullivan cdga of the standard simplex $\Delta^{\bullet}$ (see~\ref{SS:CDGA}).

Given a \emph{cofibrant} prop $P_\infty$ and any  prop $Q$, we will denote
\[
Map_{Prop}(P,Q):= Mor_{Prop}(P,Q \otimes \Omega_\bullet)
\]
the \emph{mapping space of prop morphisms}. The same definition holds for mapping spaces of properads.
\end{defn}
Indeed, the aritywise tensor product $(-)\otimes\Omega_{\bullet}$ forms a functorial simplicial resolution in the model category of dg props \cite[Proposition 2.5]{Yal2}.
This simplicial set enjoys the following key properties, see~\cite{Yal2}:
\begin{prop}\label{P: classif}
(1) The simplicial set $P_{\infty}\{X\}$ is a Kan complex and
\[
\pi_0P_{\infty}\{X\} = [P_{\infty},End_X]_{Ho(Prop)}
\]
is the set of homotopy classes of $P_{\infty}$-algebra structures on $X$.

(2) Any weak equivalence of cofibrant props $P_{\infty}\stackrel{\sim}{\rightarrow}Q_{\infty}$ induces a weak equivalence of Kan complexes $Q_{\infty}\{X\}\stackrel{\sim}{\rightarrow}P_{\infty}\{X\}$.
\end{prop}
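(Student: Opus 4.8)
The plan is to identify the simplicial set $P_{\infty}\{X\} = Mor_{Prop}(P_{\infty},End_X\otimes\Omega_{\bullet})$ with a homotopy function complex $Map_{Prop}(P_{\infty},End_X)$ in the model category of dg props, and then to deduce both assertions from the standard invariance properties of such mapping spaces. The essential input is that the Sullivan simplicial cdga $\Omega_{\bullet}$ is a \emph{simplicial resolution} of $\mathbb{K}$: each augmentation $\Omega_n\to\mathbb{K}$ is a quasi-isomorphism (the polynomial de Rham complex of a simplex is acyclic), while the matching maps $\Omega_n\to M_n\Omega$ are degreewise surjections, so that $\Omega_{\bullet}$ is Reedy fibrant. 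This is the classical Sullivan / Bousfield--Gugenheim fact recalled in \S\ref{SS:CDGA}.

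First I would check that $End_X\otimes\Omega_{\bullet}$ is a simplicial prop and that it constitutes a simplicial resolution of $End_X$. Levelwise, $(End_X\otimes\Omega_n)(m,n')=End_X(m,n')\otimes\Omega_n$ inherits horizontal and vertical composition products by tensoring those of $End_X$ with the commutative multiplication of $\Omega_n$, the simplicial operators being induced by those of $\Omega_{\bullet}$. Since tensoring the acyclic augmentation $\Omega_n\to\mathbb{K}$ with a fixed complex preserves quasi-isomorphisms, the canonical map from the constant simplicial prop $End_X$ into $End_X\otimes\Omega_{\bullet}$ is a levelwise weak equivalence; and because fibrations in $Prop$ are componentwise and tensoring over a field is exact and preserves the surjectivity of the matching maps of $\Omega_{\bullet}$, the object $End_X\otimes\Omega_{\bullet}$ is Reedy fibrant. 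Hence it is a simplicial resolution of $End_X$ in the sense of the homotopy function complex formalism.

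Granting this, $Mor_{Prop}(P_{\infty},End_X\otimes\Omega_{\bullet})$ is a right homotopy function complex $Map_{Prop}(P_{\infty},End_X)$, computed with $P_{\infty}$ cofibrant and $End_X$ fibrant (every dg prop is fibrant, fibrations being componentwise surjections and every complex being fibrant in the projective structure on $Ch_{\mathbb{K}}$). Mapping a cofibrant object into a simplicial resolution always yields a Kan complex, which gives the first part of (1). For the identification of connected components, $\pi_0$ of a homotopy function complex between $P_{\infty}$ and $End_X$ is by definition $[P_{\infty},End_X]_{Ho(Prop)}$; and since a $P_{\infty}$-algebra structure on $X$ is exactly a prop morphism $P_{\infty}\to End_X$, this set is precisely the set of homotopy classes of $P_{\infty}$-algebra structures on $X$, proving the remainder of (1). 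For (2), a weak equivalence $f:P_{\infty}\stackrel{\sim}{\rightarrow}Q_{\infty}$ between cofibrant props induces, by precomposition, the map $Q_{\infty}\{X\}\to P_{\infty}\{X\}$; by the homotopy invariance of derived mapping spaces (an incarnation of Ken Brown's lemma, a weak equivalence between cofibrant sources inducing a weak equivalence of mapping spaces into a fixed fibrant target), this map is a weak equivalence of Kan complexes, which is (2).

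The main obstacle is the first step: verifying rigorously that $End_X\otimes\Omega_{\bullet}$ is a genuine simplicial resolution, i.e. that $\Omega_{\bullet}$ transports its Reedy fibrancy through the levelwise tensor and the componentwise model structure on props. This reduces to the classical fact that $\Omega_{\bullet}$ is fibrant with acyclic levels and surjective matching maps; the remaining verifications (that the tensor defines a simplicial prop and preserves these two properties) are then formal once one unwinds that fibrations and weak equivalences in $Prop$ are detected componentwise in $Ch_{\mathbb{K}}$, where tensoring with a fixed complex is exact.
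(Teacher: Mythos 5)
Your proof is correct and follows essentially the same route as the paper's source for this statement (the paper defers to \cite{Yal2}, which, building on \cite{Frep}, establishes exactly that $End_X\otimes\Omega_{\bullet}$ is a simplicial resolution of $End_X$ in the componentwise model structure on dg props and then invokes the standard properties of homotopy function complexes). All the ingredients you flag — Reedy fibrancy of $\Omega_{\bullet}$, exactness of $-\otimes V$ over a field so that matching objects commute with the tensor, fibrancy of every dg prop, and Ken Brown's lemma for part (2) — are the right ones and the verifications go through as you describe.
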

We can extend the moduli space of $P_\infty$-structure to a simplicial presheaf by base change from $\mathbb{K}$ to any Artinian cdga. 
\begin{defn}\label{D:spresheafofAlgstructure} Let $P_\infty$ be a cofibrant prop and $X$ be a complex.
 We define a simplicial presheaf $ \underline{P_{\infty}\{X\}}: dgArt_{\mathbb{K}}^{aug} \to sSet$ by the formula
$$
 \underline{P_{\infty}\{X\}}: A\in dgArt_{\mathbb{K}}^{aug}\mapsto P_{\infty}\otimes A\{X\otimes A\}_{Mod_A}
$$
where $P_{\infty}\otimes A\{X\otimes A\}_{Mod_A}$ is the mapping space of dg props in $A$-modules $Map(P_{\infty}\otimes A,End_{X\otimes A}^{Mod_A})$ 
and $End_{X\otimes A}^{Mod_A}$ is the endomorphism prop of $X\otimes A$ taken in the category of $A$-modules.
\end{defn}
In other words, $\underline{P_{\infty}\{X\}}(A)$ is the \emph{simplicial moduli space} of $P_{\infty}$-algebra structures on $X\otimes A$
in the category of $A$-modules. 
Indeed, since $Mod_A$ is tensored over $Ch_{\mathbb{K}}$, one can make $P_{\infty}$ act on $A$-modules either by morphisms of dg props in $A$-modules 
from $P_{\infty}\otimes A$ to the endomorphism prop defined by the internal hom of $Mod_A$, 
or by morphisms of dg props from $P_{\infty}$ to the endomorphism prop defined by the external hom of $Mod_A$. See for instance \cite[Lemma 3.4]{Yal2}.

By Proposition~\ref{P: classif}, the simplicial set $\underline{P_{\infty}\{X\}}(A)$ classifies $P_{\infty}\otimes A$-algebra structures on $X\otimes A$. 
However, the simplicial presheaf $\underline{P_{\infty}\{X\}}$ is not a formal moduli problem, since $\underline{P_{\infty}\{X\}}(\mathbb{K})$ 
is in general not contractible.

\begin{defn}\label{D:fmpofAlgStruct} The \emph{formal moduli problem $\underline{P_{\infty}\{X\}}^{_{\mathbb{K}}}$
controlling (a certain type of) formal deformations of a $P_{\infty}$-algebra structure $_{\mathbb{K}}:P_{\infty}\rightarrow End_X$ on $X$ is} defined, 
on any augmented Artinian cdga $R$, as \emph{the homotopy fiber}
\begin{equation}\label{eq:defFMPAlgStructure}
\underline{P_{\infty}\{X\}}^{\psi}(R)=hofib(\underline{P_{\infty}\{X\}}(R)\rightarrow \underline{P_{\infty}\{X\}}(\mathbb{K}))
\end{equation}
taken over the base point $\psi$, the map being induced by the augmentation $R\rightarrow\mathbb{K}$.
\end{defn}
The moduli spaces of algebraic structures  and its associated formal moduli problem are encoded by $L_\infty$-algebras according to
Lurie - Pridham Theorem.
We now explain how those $L_\infty$-structures can be described explicitly using \emph{dg-properads} following \cite{MV1}
and~\cite{Yal2}. From now and until the end of Section $1$, we restrict to properads instead of props. 

\smallskip

Cofibrant resolutions of a properad $P$ can always be obtained as a cobar construction $\mathcal{B}^{co}(C)$
on some coproperad $C$ (which is usually the bar construction or the Koszul dual if $P$ is Koszul).
Given a cofibrant resolution $P_{\infty}:=\mathcal{B}^{co}(C)\stackrel{\sim}{\rightarrow}P$ of $P$ and another properad $Q$,
one constructs the convolution dg Lie algebra $Hom_{\Sigma}(\overline{C},Q)$:
\begin{defn}\label{D:convolutionLieofProperad}
 Let $C$ be a coaugmented coproperad and $Q$ be a properad. Their associated \emph{convolution} dg Lie algebra is the dg $\mathbb{K}$-module
  $$Hom_{\Sigma}(\overline{C},Q)$$ of morphisms
of $\Sigma$-biobjects from the augmentation ideal of $C$ to $Q$ endowed with the differential induced by the internal ones of $C$ and $Q$.
It is equipped with the  Lie bracket given by the antisymmetrization
of the convolution product.
\end{defn}
 This convolution product is defined similarly to the convolution product of
morphisms from a coalgebra to an algebra, using the infinitesimal coproduct of $C$ and the infinitesimal
product of $Q$. 

The total complex $Hom_{\Sigma}(\overline{C},Q)$ is a complete dg Lie algebra.
More generally, if $P$ is a properad with minimal model $(\mathcal{F}(s^{-1}\overline{C}),\partial)\stackrel{\sim}{\rightarrow}P$
for a certain homotopy coproperad $C$ (see \cite[Section 4]{MV1} for the definition of homotopy coproperads), and $Q$ is any properad, 
then \emph{the complex $Hom_{\Sigma}(\overline{C},Q)$ is a complete dg $L_{\infty}$ algebra} (which is not a dg-Lie algebra in general).

The  simplicial mapping space of morphisms $P_{\infty}\rightarrow Q$ is computed by the convolution 
$L_{\infty}$-algebra $Hom_{\Sigma}(\overline{C},Q)$ thanks to the following theorem:
\begin{thm}(cf. \cite[Theorem 2.10,Corollary 4.21]{Yal2})\label{T:Yal2}
Let $P$ be a dg properad equipped with a semi-free resolution $P_{\infty}:=(\mathcal{F}(s^{-1}\overline{C}),\partial)\stackrel{\sim}{\rightarrow}P$ and $Q$ be 
a dg properad. The simplicial presheaf
\[
\underline{Map}(P_{\infty},Q):R\in dgArt_{\mathbb{K}}^{aug}\mapsto Map_{Prop}(P_{\infty},Q\otimes R)
\]
is equivalent to the simplicial presheaf
\[
\underline{MC_{\bullet}}(Hom_{\Sigma}(\overline{C},Q)):R\in dgArt_{\mathbb{K}}^{aug}\mapsto MC_{\bullet}(Hom_{\Sigma}(\overline{C},Q)\otimes R)
\]
associated to the complete $L_{\infty}$-algebra $Hom_{\Sigma}(\overline{C},Q)$.
\end{thm}
Note that by \cite[Corollary 2.4]{Yal1}, the tensor product $MC_{\bullet}(Hom_{\Sigma}(\overline{C},Q)\otimes R)$ does not need to be completed because $R$  is Artinian. In order to get a formal moduli problem, we also consider the simplicial presheaf
\[
\underline{MC_{\bullet}^{fmp}}(Hom_{\Sigma}(\overline{C},Q)):R\in dgArt_{\mathbb{K}}^{aug}\mapsto MC_{\bullet}(Hom_{\Sigma}(\overline{C},Q)\otimes m_R),
\]
where $m_R$ is the maximal ideal of $R$. This presheaf is a formal moduli problem associated to $Hom_{\Sigma}(\overline{C},Q)$. 
In the case where $Q=End_X$, Theorem~\ref{T:Yal2} implies that the 
 the simplicial presheaf $\underline{MC_{\bullet}}(Hom_{\Sigma}(\overline{C}, End_X))$  is equivalent to $\underline{P_{\infty}\{X\}}$
 (definition~\ref{D:spresheafofAlgstructure}).
 
This theorem applies in particular to the case of a Koszul properad, 
which includes for instance Frobenius algebras, Lie bialgebras and their variants such as involutive Lie bialgebras in equivariant string topology \cite{CS2}.
It applies also to more general situations such as the properad $Bialg$ encoding associative and coassociative bialgebras, 
which is homotopy Koszul \cite[Proposition 41]{MV1}.

 \smallskip

We now describe the $L_\infty$-algebra structure encoding this formal moduli problem. It is given by twisting the convolution Lie algebra as follows.
The \emph{twisting} of $Hom_{\Sigma}(\overline{C},End_X)$ by a properad morphism $\varphi:P_{\infty}\rightarrow End_X$
is often called the \emph{deformation complex}\footnote{Proposition~\ref{P:TangentLinfty} belows justifies the name, although of course one has to  be careful
about which kind of deformation it encodes} of $\varphi$, and we have an isomorphism
\[
g_{P,X}^{\varphi} = Hom_{\Sigma}(\overline{C},End_X)^{\varphi} \cong Der_{\varphi}(\Omega(C),End_X)
\]
where the right-hand term is the complex of derivations with respect to $\varphi$ \cite[Theorem 12]{MV2}.
\begin{prop}\label{P:TangentLinfty}
The tangent $L_{\infty}$-algebra of the formal moduli problem $\underline{P_{\infty}\{X\}}^{\varphi}$ is given by
\[
g_{P,X}^{\varphi} = Hom_{\Sigma}(\overline{C},End_X)^{\varphi}.
\]
\end{prop}
\begin{proof}
Let $R$ be an augmented Artinian cdga.
By Theorem~\ref{T:Yal2}, we have the homotopy equivalences
\begin{eqnarray*}
\underline{P_{\infty}\{X\}}^{\varphi}(R) & \simeq & hofib(\underline{MC_{\bullet}}(g_{P,X})(R)\rightarrow \underline{MC_{\bullet}}(g_{P,X})(\mathbb{K})) \\
 & = & hofib(MC_{\bullet}(g_{P,X}\otimes R)\rightarrow MC_{\bullet}(g_{P,X}))
\end{eqnarray*}
where $hofib(MC_{\bullet}(g_{P,X}\otimes R)\rightarrow MC_{\bullet}(g_{P,X}))$ is the homotopy fiber, over the Maurer-Cartan element $\varphi$, of the simplicial map induced by the $L_{\infty}$-algebra morphism $p:g_{P,X}\otimes R\rightarrow g_{P,X}$ given by the tensor product of the augmentation $R\rightarrow\mathbb{K}$ with $g_{P,X}$. The unit of $R$ induces a morphism $i:g_{P,X}\rightarrow g_{P,X}\otimes R$ such that $p\circ i=Id_{g_{P,X}}$ (the composite of the unit of $R$ by its augmentation is $id_R$). According to the proof of \cite[Lemma 1.18]{MS1}, this homotopy fiber is $MC_{\bullet}(ker(\tilde{p}))$, where $\tilde{p}:(g_{P,X}\otimes R)^{\varphi\otimes 1_R}\rightarrow g_{P,X}^{\varphi}$ is the morphism of $L_{\infty}$-algebras still defined by $p$ but between the $L_{\infty}$-algebras twisted respectively by $\varphi\otimes 1_R$ and $p(\varphi\otimes 1_R)=\varphi$ (note that the enunciate of \cite[Lemma 1.18]{MS1} is a bit misleading as written, since they consider $ker(p)^{i(x)}$ even though $i(x)$ is not an element of $ker(p)$ for a non trivial $x$, hence the little correction here). We have $(g_{P,X}\otimes m_R)^{\varphi\otimes 1_R}=g_{P,X}^{\varphi}\otimes m_R$ by definition of the $L_{\infty}$-algebra structure on $g_{P,X}\otimes m_R$ (the reader may notice that given any $L_{\infty}$-algebra $\mathfrak{g}$, any Maurer-Cartan element $\varphi$ of $g$ and any cdga $R$, one has $(\mathfrak{g}\otimes R)^{\varphi\otimes 1_R}=\mathfrak{g}^{\varphi}\otimes R$). The kernel of $\tilde{p}$ is the tensor product of $g_{P,X}^{\varphi}$ with the kernel of the augmentation, which is the maximal ideal $m_R$ of $R$, so $ker(\tilde{p})=g_{P,X}^{\varphi}\otimes m_R$.

Our homotopy fiber is $MC_{\bullet}(g_{P,X}^{\varphi}\otimes m_R)$, so there is an equivalence of formal moduli problems
\[
\underline{P_{\infty}\{X\}}^{\varphi} \simeq \underline{MC_{\bullet}^{fmp}}(g_{P,X}^{\varphi}).
\]
By Lurie's equivalence theorem, this means that $g_{P,X}^{\varphi}$ is the Lie algebra of the formal moduli problem $\underline{P_{\infty}\{X\}}^{\varphi}$.
\end{proof}

\section{Derived formal groups of algebraic structures and associated formal moduli problems}\label{S:formalgeom}

In this section, we explain how the theory of  formal moduli problems is related to derived formal groups,
and how this allows to state the correspondence between formal groups and  Lie algebras at a higher (and derived) level of generality. 
This correspondence is suitable for us to define a natural deformation problem of homotopy $P$-algebras structures on a complex $X$ up to quasi-isomorphisms and to understand how it relates to those associated moduli space of algebraic structures from section~\ref{SS:DefALgStructClassic}.

\begin{defn}\label{D:derivedgpobkects} Let $\mathcal{C}$ be a stable $\infty$-category. We denote by 
  $Mon_{E_1}^{gp}(\mathcal{C})$ the $\infty$-category of grouplike $E_1$-monoids in $\mathcal{C}$, that is the subcategory of 
  grouplike objects in the $\infty$-category of $E_1$-algebras in $\mathcal{C}$ equipped with the cartesian monoidal structure. 
  here an $E_1$-monoid $G$ is said to be \emph{grouplike} if the two canonical maps $ (\mu, \pi_i):G\times G \to G\times G$  (induced by the multiplication $\mu:G\times G\to G$
  and the two canonical projections $\pi_1,\, \pi_2:G\times G\to G$) are equivalences.
  
A \emph{group object} of $\mathcal{C}$ is an object of   $Mon_{E_1}^{gp}(\mathcal{C})$.
\end{defn}

\begin{example}
Loop spaces provide the main source of examples of group objects in topology (which are also called $H$-groups in this particular setting). A topological monoid $M$ is said to be grouplike if $\pi_0M$ is a group, and since any grouplike topological monoid is equivalent to a loop space, grouplike topological monoids model group objects in the $\infty$-category of topological spaces in the sense of Definition~\ref{D:derivedgpobkects}. The same holds true for grouplike simplicial monoids, which model group objects in the $\infty$-category of simplicial sets and will be especially useful for us to study homotopy automorphisms of algebras.
\end{example}

\subsection{Generalities on derived formal groups}

First, let us remark that the category of  formal moduli problems is pointed. 
In fact, we have :
\begin{lem}\label{L:pointedSPsh}
Let $SPsh^{pt}((dgArt_{\mathbb{K}}^{aug})^{op})$ be the full sub-$\infty$-category of $SPsh((dgArt_{\mathbb{K}}^{aug})^{op})$ consisting of those $\infty$-functors $F$ 
from augmented dg Artinian algebras to simplicial sets 
such that $F(\mathbb{K})$ is contractible. This $\infty$-category is pointed.
\end{lem}
\begin{proof}
Let us 
first note $pt$ the $\infty$-functor sending any augmented Artinian cdga to the simplicial set generated by a single vertex. 
Now let $R$ and $R'$ be two augmented Artinian cdgas, let us write $\eta_R,\eta_{R'}$ for their respective unit morphisms and $\epsilon_R,\epsilon_{R'}$ their respective augmentations. Let $f:R\rightarrow R'$ be a morphism of augmented Artinian cdgas. A morphism of augmented Artinian cdgas commutes with units, so the diagram
\[
\xymatrix{
pt(R)\ar[r]^-{\sim}\ar[d]_-= & F(\mathbb{K})\ar[d]^-=\ar[r]^-{F(\eta_R)} & F(R)\ar[d]^-f \\
pt(R')\ar[r]^-{\sim} & F(\mathbb{K})\ar[r]^-{F(\eta_{R'})} & F(R')
}
\]
commutes as well, hence we get a unique morphism of $\infty$-functors $pt\rightarrow F$.
A morphism of augmented Artinian cdgas commute with augmentations, so the diagram
\[
\xymatrix{
F(R)\ar[d]^-f\ar[r]^-{F(\epsilon_R)} & F(\mathbb{K})\ar[d]^-=\ar[r]^-{\sim} & pt(R)\ar[d]_-=  \\
F(R')\ar[r]^-{F(\epsilon_{R'})} & F(\mathbb{K})\ar[r]^-{\sim} & pt(R')
}
\]
commutes as well, hence a unique morphism $F\rightarrow pt$.
\end{proof}
Consequently, one can form the \emph{pointed loop space functor} as the homotopy pullback \begin{equation}\label{eq:defLoopofFMP} \Omega_*F \,:=\, pt\times^h_F pt \end{equation} in $SPsh^{pt}((dgArt_{\mathbb{K}}^{aug})^{op})$.
Let us note that since $dgArt_{\mathbb{K}}^{aug}$ and $sSet$ are presentable, 
the $\infty$-category of pointed $\infty$-functors is presentable as well.
Therefore $SPsh^{pt}((dgArt_{\mathbb{K}}^{aug})^{op})$ is a presentable pointed $\infty$-category, and $FMP_{\mathbb{K}}$ is a presentable pointed sub-$\infty$-category of it. Therefore the universal property of homotopy pullbacks makes $\Omega_*F$ into a group-like $E_1$-monoid in simplicial presheaves (see~\ref{P:PropertiesofOmega}.(1) below). 

Moreover, the inclusion 
\[
i:FMP_{\mathbb{K}}\hookrightarrow  SPsh^{pt}((dgArt_{\mathbb{K}}^{aug})^{op})
\]
commutes with small homotopy limits, and since small homotopy limits in $SPsh^{pt}((dgArt_{\mathbb{K}}^{aug})^{op})$ are determined pointwise,   we have proved
\begin{lem}\label{L:LoopsofFMPisloops}
For any derived formal moduli problem $F$ and any augmented Artinian cdga $R$, we have $(\Omega_*F)(R) \, \cong \,\Omega_{\eta_R}F(R)$ where the base point of $F(R)$ is given by the morphism $F(\eta_R):pt\simeq F(\mathbb{K})\rightarrow F(R)$ induced by the unit $\eta_R$ of $R$.
\end{lem}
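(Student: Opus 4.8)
The plan is to reduce the statement to the single genuine input that homotopy limits of simplicial presheaves are computed objectwise, and then to track the basepoints carefully. First I would recall that $\Omega_* F$ is defined in~\eqref{eq:defLoopofFMP} as the homotopy pullback $pt\times^h_F pt$, formed in $SPsh^{pt}((dgArt_{\mathbb{K}}^{aug})^{op})$, where both legs are the canonical basepoint morphism $pt\to F$ constructed in the diagrams just above. Since this is a homotopy \emph{limit} and small homotopy limits of simplicial presheaves over the fixed site $dgArt_{\mathbb{K}}^{aug}$ are computed pointwise, evaluation at any augmented Artinian cdga $R$ commutes with the pullback. I would therefore write
\[
(\Omega_* F)(R)\;\simeq\; pt(R)\times^h_{F(R)} pt(R).
\]

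Next I would identify the two legs of this evaluated cospan. By construction $pt(R)$ is the simplicial set on a single vertex, so $pt(R)\simeq *$, and the induced map $pt(R)\to F(R)$ is, by the construction of $pt\to F$ recalled in the commuting diagram preceding the statement, the composite $pt(R)\simeq F(\mathbb{K})\xrightarrow{F(\eta_R)} F(R)$; that is, it selects the basepoint $\eta_R\in F(R)$. Both legs are this same morphism. Substituting gives
\[
(\Omega_* F)(R)\;\simeq\; *\times^h_{F(R)} *,
\]
whose right-hand side is, by definition, the based loop space $\Omega_{\eta_R}F(R)$ of the pointed simplicial set $(F(R),\eta_R)$, which is the desired equivalence.

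To keep the bookkeeping consistent I would note that the inclusion $i\colon FMP_{\mathbb{K}}\hookrightarrow SPsh^{pt}((dgArt_{\mathbb{K}}^{aug})^{op})$ preserves small homotopy limits, so it is immaterial whether the defining pullback is taken inside $FMP_{\mathbb{K}}$ or in the ambient presheaf $\infty$-category---the two agree---which is what legitimizes applying the pointwise formula to $\Omega_* F$. The only point requiring care is the correct identification of the basepoint: one must check that evaluating the canonical map $pt\to F$ at $R$ yields precisely $\eta_R$ and not some other vertex, which is exactly the content of that commuting diagram. I expect no substantive obstacle here; the lemma is formal once the objectwise nature of homotopy limits and the construction of the basepoint morphism $pt\to F$ are in hand.
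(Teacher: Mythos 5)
Your argument is correct and is essentially the paper's own: both reduce the statement to the facts that the defining homotopy pullback $pt\times^h_F pt$ may be computed in the ambient $\infty$-category of pointed simplicial presheaves (since the inclusion of $FMP_{\mathbb{K}}$ preserves small homotopy limits) and that such limits are determined pointwise, with the basepoint identified as $F(\eta_R)$ via the commuting diagrams constructed just before the lemma. No substantive difference from the paper's proof.
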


The base point of $F(R)$ corresponds to the \lq\lq{}trivial $R$-deformation\rq\rq{} of the unique point of $F(\mathbb{K})$. It is important to mention that $FMP_{\mathbb{K}}$ is presentable \cite[Remark 1.1.17]{Lur0} and that the inclusion of $FMP_{\mathbb{K}}$ in pointed $\infty$-functors admits a left adjoint (applying the $\infty$-categorical adjoint functor theorem)
\[
L: SPsh^{pt}((dgArt_{\mathbb{K}}^{aug})^{op})\rightarrow FMP_{\mathbb{K}}
\]
making a simplicial presheaf canonically into a formal moduli problem.
When $F$ is a formal moduli problem, then $LF \cong F$, otherwise $LF$ is the (best) formal moduli problem approximating the pointed $\infty$-functor $F$.
The functor $L$ is hard to understand explicitely in general, but is related to the (standard) pointwise classifying space functor, in the sense that we have a natural equivalence \begin{equation}L(B\Omega_*F) \, \cong \, LF \end{equation}
where $B$ is given by applying objectwise the classifying space functor from $E_1$-monoids in spaces to spaces. 

\smallskip

The loop space functor enjoys the following properties (as a consequence of Lurie's work \cite{Lur0}, see for example \cite[Proposition 2.15]{BKP} for a proof):
\begin{prop}\label{P:PropertiesofOmega}
\begin{itemize}
\item[(1)] Let $\mathcal{C}$ be a pointed presentable $\infty$-category. The pointed loop space $\infty$-functor lifts to a ($\infty$-categorical) limit preserving functor
\[
\Omega_*:\mathcal{C}\rightarrow Mon_{E_1}^{gp}(\mathcal{C})
\]
where $Mon_{E_1}^{gp}(\mathcal{C})$ (\ref{D:filteredandcomplteLinfty}) is the $\infty$-category of grouplike $E_1$-monoids in $\mathcal{C}$.

\item[(2)] In the case $\mathcal{C}=FMP_{\mathbb{K}}$, the loop space functor is an equivalence.
\end{itemize}
\end{prop}
\begin{defn}\label{D:derivedformalgroup}
 A \emph{derived formal group} is an object of $Mon_{E_1}^{gp}(FMP_{\mathbb{K}})$, that is a group object in the pointed $\infty$-category of formal moduli problems.
\end{defn}
By proposition~\ref{P:PropertiesofOmega}.(2), the functor $\Omega_*$
has a left adjoint \begin{equation}
                  \label{eq:defBfmp}   
                  B_{fmp}: Mon_{E_1}^{gp}(FMP_{\mathbb{K}}) \longrightarrow FMP_{\mathbb{K}}.
                   \end{equation}
The functor  $B_{fmp}$ is obtained as a generalized bar construction given by the realization of a simplicial object in derived formal moduli problems, 
hence a homotopy colimit corresponding to a classifying space $\infty$-functor for derived formal groups (see \cite[Lemma 2.16]{BKP} and \cite[Remark 5.2.2.8]{Lur2}). 
Composing equivalence (2) with Lurie's equivalence theorem \cite{Lur0} result into the equivalence 
\[ Mon_{E_1}^{gp}(FMP_{\mathbb{K}}) \, \cong \, FMP_{\mathbb{K}} \, \cong \, L_\infty-Alg\]
 between the $\infty$-category of dg-Lie algebras and derived formal groups.

 This equivalence is an  analogue to the classical correspondence between formal/algebraic/Lie groups and Lie algebras. This equivalence holds true not only in the commutative case but also for iterated loop spaces and noncommutative moduli problems, see \cite[Proposition 2.15]{BKP}.
\begin{rem}\label{R:connectfmp}
Note that $B_{fmp}$ is \emph{not} defined pointwise by the standard classifying space. If it was so, then, given a formal moduli problem $F$, for any Artinian cdga $R$, there would be an equivalence $B\Omega_{\eta_R}F(R)\simeq F(R)$, which would imply that $F(R)$ is connected. This is not the case, since $F(R)$ is equivalent to the nerve of the dg Lie algebra $\mathfrak{L}_F\otimes m_R$ (where $\mathfrak{L}_F$ is the dg Lie algebra of $F$ via Lurie-Pridham correspondence), and the connected components of the later are the equivalence classes of Maurer-Cartan elements of $\mathfrak{L}_F$.
\end{rem}

Let us recall from Definition \ref{D:TgtcplxFMP} the structure of the $\Omega$-spectrum $\mathcal{T}_F$ whose associated complex is $\mathbb{T}_F$:
\[
F(\mathbb{K}\oplus\mathbb{K}[n])\overset{\sim}{\rightarrow}pt\times_{\mathbb{K}\oplus\mathbb{K}[n+1]}^hpt\simeq \Omega_*F(\mathbb{K}\oplus\mathbb{K}[n+1]).
\]
Now, recall that the pointed loop space functor for formal moduli problems is determined pointwise by the standard pointed loop space, 
so $\mathcal{T}_{\Omega_*F}\simeq \Omega \mathcal{T}_F$, which means that  
\begin{equation}\label{eq:TofOmega}\mathbb{T}_{\Omega_*F}\;\cong \; \mathbb{T}_F[-1]\end{equation} for the corresponding complexes.

\begin{rem}\label{R:nontrivdef}
Since $(\Omega_*F)(R)\simeq \Omega_{\eta_R}F(R)$ (lemma~\ref{L:LoopsofFMPisloops}), the derived formal group of a formal moduli problem $F$ seems to retain, for any $R$, only the informations about the connected component of the trivial $R$-deformation. However, all the information of the deformation problem is in fact contained here, since its tangent complex gives the dg Lie algebra controling it. To understand  how this is possible, let us remind that by infinitesimal cohesiveness of $F$ we have, for example, equivalences
\[
F(\mathbb{K}\oplus\mathbb{K}[n])\simeq \Omega_*F(\mathbb{K}\oplus\mathbb{K}[n+1])
\]
which means that the space of $\mathbb{K}\oplus\mathbb{K}[n]$-deformations is equivalent to the space of self-equivalences of the trivial $\mathbb{K}\oplus\mathbb{K}[n+1]$-deformation. For example, deformations over the algebra of dual numbers $\mathbb{K}[t]/(t^2)$ are recovered as loops over the trivial $\mathbb{K}[\epsilon]/(\epsilon^2)$-deformation with $\epsilon$ of degree $1$.
\end{rem}
More generally, if $F$ is a pointed $\infty$-functor such that $\Omega_*F$ is a derived formal group (e.g. a $1$-proximate moduli problem in the sense of \cite[Definition 5.1.5]{Lur0}, see also \cite[Lemma 2.11]{BKP}), then $$\mathbb{T}_{\Omega_*F}\,\simeq \,\mathbb{T}_{LF}[-1].$$ This comes from \cite[Lemma 5.1.12]{Lur0}. In other words, \emph{The derived formal group $\Omega_*F$ controls the deformations parametrized by the formal moduli completion of $F$}.

Note  that many functors are not representable by a derived stack via Lurie's representability theorem \cite{Lur02}, but produce nethertheless derived formal moduli problems when restricted to Artinian cdgas, so one can associate a Lie algebra to them without any representability condition. 

\begin{example}\label{Ex:infcohesive}A case of interest for us is when $F$ is an infinitesimally cohesive (in the sense of \cite[Definition 2.1.1]{Lur02}) simplicial $\infty$-presheaf over $(dgArt_{\mathbb{K}}^{aug})^{op}$.
That is a  simplicial presheaf preserving weak equivalences and satisfying the derived Schlessinger condition (~\ref{D:FMP}),
but such that $F(\mathbb{K})$ is not (necessarily) contractible. Then one can nethertheless attach to any $\mathbb{K}$-point $x\in F(\mathbb{K})$ a derived formal moduli problem $\widehat{F_x}$ by setting
\[
\widehat{F_x}(R) = hofib_x(F(R)\rightarrow F(\mathbb{K})),
\]
where the map is induced by the augmentation $R\rightarrow \mathbb{K}$ of the Artinian cdga $R$ (see the proof of~\ref{L:formalofderivedGp}).
Thus, one attaches to any $x\in F(\mathbb{K})$ a derived formal group by taking the pointed loop space of the construction above. 
Hence, such a $F$ parametrizes a family of derived formal moduli problems over $F(\mathbb{K})$. 
\end{example}

\subsection{Derived prestack group and their tangent $L_\infty$-algebras}\label{SS:DerivedPreStackGp}

We will now study families of derived formal groups, which we call derived prestack groups. These are analogues of Lie groups but in the context 
of  infinitesimally cohesive prestacks instead of manifolds. 
In particular, they have an associated $L_\infty$-algebra given by their tangent space at the 
neutral element.

\smallskip 

Let us denote by $SPsh^{infcoh}_{\infty}((dgArt_{\mathbb{K}}^{aug})^{op})$ 
the $\infty$-category of infinitesimally cohesive $\infty$-functors on $dgArt_{\mathbb{K}}^{aug}$ 
with values in simplicial sets. Here we denote by $SPsh$ the simplicial presheaves and $infcoh$ the infinitesimal cohesiveness of the corresponding $\infty$-functors.
We can consider  its $\infty$-category of group objects (Definition~\ref{D:derivedgpobkects}); that is we introduce the following definition:
\begin{defn}\label{D:derivedgroup}
A \emph{derived prestack group} is a group object in the $\infty$-category $SPsh^{infcoh}_{\infty}((dgArt_{\mathbb{K}}^{aug})^{op})$.
More precisely, the $\infty$-category 
of derived prestack groups is $Mon_{E_1}^{gp}(SPsh^{infcoh}_{\infty}((dgArt_{\mathbb{K}}^{aug})^{op}))$.
\end{defn}
The relevance of the definition is given by the following
\begin{lem}\label{L:formalofderivedGp}
Let $G$ be a derived prestack group. For any $x\in G(\mathbb{K})$, the completion \[\widehat{G_x}:=\Big(R\mapsto hofib_x\big(G(R)\rightarrow G(\mathbb{K})\big)\Big)\] is a formal derived group. 
\end{lem}
\begin{proof}The map $G(R)\rightarrow G(\mathbb{K})$ is induced by the augmentation $R\to \mathbb{K}$ of $R$.
 Since the homotopy fiber is an $\infty$-limit, it preserves the infinitesimally cohesive condition and weak equivalences. 
By definition, the homotopy fiber computed for $R=\mathbb{K}$ is a point and therefore $\widehat{G_x}$ is a formal moduli problem according to definition~\ref{D:FMP}.
\end{proof}
In other words, a derived prestack group $G$ is a family of derived formal groups parametrized by $G(\mathbb{K})$. 
In what follows, we will by especially interested in the formal completion at the neutral element.
The pointed loop space construction commutes with homotopy fibers, so for any $F\in SPsh^{infcoh}_{\infty}((dgArt_{\mathbb{K}}^{aug})^{op})$ 
and any $x\in F(\mathbb{K})$, we have
\[
\widehat{(\Omega_xF)_e} = \Omega_*\widehat{F_x}.
\]
Hence the derived formal group associated to the derived prestack group $\Omega_xF$ by completion at the constant loop
is the derived formal group corresponding 
to the formal moduli problem $\widehat{F_x}$.
Moreover, since the loop space functor preserves weak equivalences as a homotopy limit, it leads to a homotopy invariance of derived formal groups with respect to their underlying derived prestacks:
\begin{lem}\label{L:invarianceformalgroups}
Let $f:F\rightarrow G$ be a morphism in $SPsh^{infcoh}_{\infty}((dgArt_{\mathbb{K}}^{aug})^{op})$. If $f$ is a weak equivalence, then for any $x\in F(\mathbb{K})$ it induces:
\begin{itemize}
\item[-] a weak equivalence of derived prestack groups $\Omega_xF\stackrel{\sim}{\rightarrow} \Omega_{f(x)}G$;
\item[-] a weak equivalence of derived formal moduli problems $\widehat{F_x}\stackrel{\sim}{\rightarrow}\widehat{G_{f(x)}}$;
\item[-] a weak equivalence of derived formal groups  $\Omega_*\widehat{F_x}\stackrel{\sim}{\rightarrow}\Omega_*\widehat{G_{f(x)}}$.
\end{itemize}
\end{lem}
\begin{rem}
One cannot expect the formal completion of any derived stack at a point to produce a derived formal group and a corresponding tangent Lie algebra, 
because of the lack of cohesiveness. However, any derived Artin stack (that is, geometric for smooth morphisms) is in particular cohesive, 
see for instance \cite[Corollary 6.5]{Lur01} and \cite[Lemma 2.1.7]{Lur02}.
\end{rem}
To legitimate constructions we are going to use in the next section, it is  worth mentionning the following properties 
of infinitesimally cohesive simplicial presheaves:
\begin{lem}\label{L:infcohppties}

(1) The $\infty$-category $SPsh^{infcoh}_{\infty}((dgArt_{\mathbb{K}}^{aug})^{op})$ is stable under small limits.

(2) If $\mathcal{C}$ and $\mathcal{D}$ are two equivalent $\infty$-categories,  the $\infty$-categories $SPsh^{infcoh}_{\infty}(\mathcal{C}^{op})$ 
and $SPsh^{infcoh}_{\infty}(\mathcal{D}^{op})$ are equivalent as well.
\end{lem}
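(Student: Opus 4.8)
The plan is to reduce both assertions to formal statements about $\infty$-categorical limits, since the two conditions defining an object of $SPsh^{infcoh}_{\infty}((dgArt_{\mathbb{K}}^{aug})^{op})$ are, by Definition~\ref{D:FMP} and Example~\ref{Ex:infcohesive}, both preservation-of-limit conditions: such a presheaf $F$ (i) preserves weak equivalences and (ii) sends each homotopy pullback square of Artinian cdgas satisfying the $\pi_0$-surjectivity hypotheses of Definition~\ref{D:FMP}(3) to a homotopy pullback square of simplicial sets.

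For part (1), I would first invoke the fact that limits in the $\infty$-category of simplicial presheaves are computed objectwise, so that for a small diagram $\{F_i\}_{i\in I}$ of cohesive presheaves and any Artinian cdga $R$ one has $(\lim_i F_i)(R)\simeq \lim_i F_i(R)$. Condition (i) for $F:=\lim_i F_i$ is then immediate, a homotopy limit of weak equivalences of simplicial sets being a weak equivalence. For condition (ii) I would fix an admissible homotopy pullback square of Artinian cdgas; applying $F$ and using the objectwise formula turns its image into the limit over $i$ of the squares obtained by applying each $F_i$. Each of the latter is a homotopy pullback by hypothesis, and since a homotopy pullback is itself a finite homotopy limit (over the cospan diagram), the fact that homotopy limits commute with one another shows that the limit square is again a homotopy pullback. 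Hence $F$ is infinitesimally cohesive, and as these limits are created by the inclusion into all simplicial presheaves, the subcategory is stable under small limits.

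For part (2), I would use that an equivalence of $\infty$-categories $\Phi:\mathcal{C}\stackrel{\sim}{\rightarrow}\mathcal{D}$ induces by precomposition an equivalence $\Phi^{*}:SPsh(\mathcal{D}^{op})\stackrel{\sim}{\rightarrow}SPsh(\mathcal{C}^{op})$, with quasi-inverse induced by a homotopy inverse of $\Phi$. It then remains to check that $\Phi^{*}$ and its inverse carry infinitesimally cohesive presheaves to infinitesimally cohesive presheaves, so that the equivalence restricts to the full subcategories in question. Since $\Phi$ is an equivalence it preserves and reflects all homotopy (co)limits, in particular homotopy pullback squares, and it induces an equivalence of homotopy categories, so the ancillary $\pi_0$-surjectivity conditions attached to the admissible squares --- being conditions formulated on the underlying homotopy category --- are transported along $\Phi$ as well. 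Consequently $F$ is cohesive if and only if $\Phi^{*}F$ is, and $\Phi^{*}$ restricts to the desired equivalence.

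The only delicate point, entirely located in part (2), is the bookkeeping needed to see that the extra $\pi_0$-surjectivity hypotheses decorating the admissible squares are intrinsic enough to be preserved by an equivalence of the source $\infty$-categories; once one grants that they are formulated at the level of the homotopy category, the remainder is the purely formal observation that a natural equivalence restricts to any full subcategory cut out by preservation of a class of limit diagrams which is itself stable under the equivalence.
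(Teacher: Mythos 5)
Your proposal is correct and follows essentially the same route as the paper: part (1) is the standard observation that pointwise limits of functors preserving weak equivalences and the relevant pullback squares again preserve them (the paper simply cites Lurie's remark on infinitesimally cohesive functors for this), and part (2) is the restriction of the precomposition equivalence of presheaf $\infty$-categories to the cohesive full subcategories (the paper phrases this as an equivalence of $\infty$-sites with the discrete topology). Your explicit attention to the transport of the $\pi_0$-surjectivity hypotheses is a reasonable elaboration of a point the paper leaves implicit.
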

\begin{proof}

(1) Follows from the definition of infinitesimally cohesive $\infty$-functors \cite[Remark 2.1.11]{Lur02}.

(2) This is just a particular case of an equivalence of $\infty$-categories of sheaves induced by an equivalence of their $\infty$-sites, here with the discrete Grothendieck topology.
\end{proof}

\begin{defn} \emph{(Tangent Lie algebra of derived groups)}\label{D:TgtofGpPreStack}
\begin{itemize}
 \item Let $\widehat{G}$ be a derived formal group (\ref{D:derivedformalgroup}). Its \emph{tangent homotopy Lie algebra} is 
 \[ Lie\big(\widehat{G}\big):= \mathfrak{L}_{B_{fmp}\big(\widehat{G}\big)} \in Lie_\infty-Alg\]
 where $B_{fmp}$ is the equivalence~\eqref{eq:defBfmp} and $\mathfrak{L}_{(-)}$ the one of~\ref{C:LieFMP}.
 \item 
Let $G$ be a derived prestack group (\ref{D:derivedgroup}). Its \emph{tangent homotopy Lie algebra} is 
\[ Lie(G):= Lie( \widehat{G}_{1} )\] where  $\widehat{G}_1$ is the formal completion at the unit of $G$ (\ref{L:formalofderivedGp}).
\end{itemize}
\end{defn}
The following result shows that  the tangent at the identity of a derived prestack group inherits a canonical structure of homotopy Lie algebra (which completely determines it if it is actually a derived formal group).
\begin{prop}\label{P:LieofGroup} Let $G$ be a derived prestack group. 
\begin{enumerate}\item There is an equivalence of underlying complexes $Lie(G)  \, \cong \, (\mathbb{T}_{G})_1$ between its Lie algebra and its tangent space at $1$. 
\item If $F$ is a formal moduli problem and $G\cong \Omega F$, then $Lie(G) = \mathfrak{L}_F$.
\item For any point $x$ in $G(\mathbb{K})$, $(\mathbb{T}_{G})_x \, \cong \, (\mathbb{T}_{G})_1$.\end{enumerate}
\end{prop}
\begin{proof}
By Proposition~\ref{P:PropertiesofOmega} and~\eqref{eq:TofOmega} we  have equivalences of complexes
\[\mathbb{T}_{\widehat{G}_{1}} \, \cong \, \mathbb{T}_{\Omega B_{fmp}\widehat{G}_{1}} \, \cong\, \mathbb{T}_{B_{fmp}\widehat{G}_{1}}[-1]\, \cong\, 
\mathfrak{L}_{B_{fmp}\widehat{G}_{1}}\]
where the first equivalence follows from the fact that $\Omega B_{fmp}$ is equivalent to the identity, the second equivalence from \ref{eq:TofOmega} and the third equivalence from Lurie's result ~\cite{Lur02} asserting that the underlying complex of the Lie algebra $\mathfrak{L}_F$ of a formal moduli problem  $F$ is equivalent to $\mathbb{T}_F[-1]$. The first claim follows then from Definition~\ref{D:TgtofGpPreStack}.

The second claim follows from the fact that $\Omega B_{fmp}$ is the identity and Lemma~\ref{L:LoopsofFMPisloops}, using the sequence of equivalences
\[
Lie(\Omega F) \, = \, \mathfrak{L}_{B_{fmp}\Omega F} \, \cong \, \mathfrak{L}
\]
and that $B_{fmp}\Omega$ is equivalent to the identity.

To conclude, since $G$ is a grouplike monoid object, the map $G\to G$ induces by multiplication by $x$ is an equivalence which proves the last statement.
\end{proof}

Moreover, by Lemma \ref{L:invarianceformalgroups}:
\begin{lem}\label{L:invariancetangentLie}
Let $f:F\rightarrow G$ be a morphism in $SPsh^{infcoh}_{\infty}((dgArt_{\mathbb{K}}^{aug})^{op})$. If $f$ is a weak equivalence, then for any $x\in F(\mathbb{K})$ it induces a weak equivalence of tangent homotopy Lie algebras
\[
Lie(\Omega_xF)\stackrel{\sim}{\rightarrow} Lie(\Omega_{f(x)}G).
\]
\end{lem}

\begin{example}\label{ex:OmegaSpaces}Easy examples of  derived prestack groups $G$ are given  by infinitesimally cohesive $\infty$-functors
\[
G:dgArt_{\mathbb{K}}^{aug}\rightarrow \Omega\textrm{-}Spaces
\]
where $\Omega\textrm{-}Spaces$ is the $\infty$-category $Mon_{E_1}^{gp}(Top)$ of grouplike $E_1$-monoids in spaces, i.e., group objects in topological spaces. 
 By May's recognition principle, the latter are (weakly) equivalent to loop spaces, hence the terminology. 
 Our examples of interests will take place in the $\infty$-category of grouplike simplicial monoids $sMon^{gl}$ as a model for $Mon_{E_1}^{gp}(Top)$ (i.e. we use 
 the equivalence between the model categories of topological spaces and simplicial sets and strictification to model $\Omega\textrm{-}Spaces$).
 As we explained, a derived prestack group $G$ gives rises to a family of derived formal groups parametrized by $G(\mathbb{K})$. 
 \end{example}

 In the next section we will focus on the formal neighourhood of the identity in homotopy automorphism groups, 
 and see how this formalism applies to homotopy automorphisms of algebras over props.

\subsection{Prestacks of algebras and derived groups of homotopy automorphisms}\label{SS:DKSpace}
We now define our second type of moduli of algebraic structures build on automorphisms of the structure.

\smallskip

First we recall that the self equivalences of an object in an $\infty$-category are canonically a group object in spaces (as in example~\ref{ex:OmegaSpaces}).
When the $\infty$-category comes from a model category, strict models for those self equivalences are given by simplicial monoids of homotopy automorphisms.
We refer the reader to \cite[Section 2.2]{Fre5} for a detailed account on simplicial monoids of homotopy automorphisms in model categories 
and to \cite{DK1,DK2,DK3} for the generalization to homotopy automorphisms in the simplicial localization of any relative category. 
\begin{defn}\label{D:haut} Let $X$ be a chain complex. Let $P$ be a prop, 
 $P_{\infty}$  a cofibrant resolution of $P$, and  $(X,\varphi: P_\infty\to End_X)$ be a $P_{\infty}$-algebra structure on $X$. 
\begin{itemize}\item We denote $\underline{haut}(X)$  
the derived prestack group of homotopy automorphisms of the underlying complex $X$ 
taken in the model category of chain complexes\footnote{Precisely we consider the projective model structure}. It is defined by
\[
dgArt_{\mathbb{K}}^{aug} \ni \,R \,\mapsto \, haut_{Mod_R}(X\otimes R),
\]
where $haut_{Mod_R}$ is the simplicial monoid of homotopy automorphisms in the category of $R$-modules.
\item 
We define $\underline{haut}_{P_{\infty}}(X,\varphi)$ to be the derived prestack group associated 
to the automorphisms of $(X,\varphi)$\footnote{that is, the  automorphisms or weak self-equivalences of $(X,\varphi)$ in this $\infty$-category}
in the $\infty$-category $P_{\infty}-Alg[W_{qiso}^{-1}]$: 
\[dgArt_{\mathbb{K}}^{aug} \ni \,R \,\mapsto \, \textrm{Iso}_{P_{\infty}-Alg(Mod_R) [W_{qiso}^{-1}]}\big(X\otimes R, X\otimes  R\big) \]
where, for any $\infty$-category $\mathcal{C}$, we write $\textrm{Iso}_{\mathcal{C}}$ for the space of maps in the underlying maximum $\infty$-groupoid of 
$\mathcal{C}$. 
\end{itemize}
\end{defn}
Note that, since $X$ is cofibrant (like any chain complex over a field) and $(-)\otimes R$ is a left Quillen functor, 
the homotopy automorphisms $\underline{haut}(X)$   above are exactly the self quasi-isomorphisms of $X\otimes R$. 
We prove in Theorem~\ref{T:hautderivedgroup} below that $\underline{haut}_{P_{\infty}}(X,\varphi)$ is indeed a derived prestack 
group.

Let us describe more precisely this derived group: 
consider the presheaf of $\infty$-categories 
over $cdga_{\mathbb{K}}^{op}$ defined by
\begin{eqnarray*}
\underline{P_{\infty}-Alg}:cdga_{\mathbb{K}} & \rightarrow & Cat_{\infty} \\
R & \longmapsto & P_{\infty}-Alg(Mod_R^{cof})[W_{qiso}^{-1}]
\end{eqnarray*}
where $Cat_{\infty}$ is the $\infty$-category of $\infty$-categories. Here $Mod_R^{cof}$ is the subcategory of cofibrant $R$-modules in the projective model structure. 
Let us take then the maximal sub-$\infty$-groupoid of 
$P_{\infty}-Alg(Mod_R^{cof})[W_{qiso}^{-1}]$ for each $R$, getting an $\infty$-groupoid valued presheaf. 
Then, the based loop space at a point $(X,\varphi)$ is exactly $\underline{haut}_{P_{\infty}}(X,\varphi)$. 
An explicit construction for this is given by, for any cdga $R$, the Dwyer-Kan simplicial loop groupoid \cite{DK4} of the quasi-category 
$P_{\infty}-Alg(Mod_R)[W_{qiso}^{-1}]$. Then the Kan complex of paths from $(X\otimes R,\varphi\otimes R)$ to itself in this simplicial loop groupoid is a model 
for  $\underline{haut}_{P_{\infty}}(X,\varphi)(R)$ (this is similar to example~\ref{ex:OmegaSpaces}).

\smallskip

 We now describe a \lq\lq{}point-set\rq\rq{} model for the construction of  those ($\infty$-categorical) 
derived groups of 
$P_\infty$-algebras automorphisms. 
First, we introduce a related and useful construction. 

\smallskip

\noindent
\textit{The presheaf of Dwyer-Kan classification spaces.} 
The assignment
\[
R\mapsto wP_{\infty}-Alg(Mod_R^{cof}),
\]
where the $w(-)$ stands for the subcategory of weak equivalences and $cof$ for cofibrant $R$-modules, defines a weak presheaf of categories in the sense of 
\cite[Definition I.56]{Ane}. It sends a morphism $A\rightarrow B$ to the functor $-\otimes_A B$. which is symmetric monoidal,
hence lifts at the level of $P_{\infty}$-algebras. This is not a strict presheaf, since the composition of morphisms $A\rightarrow B\rightarrow C$ 
is sent to the functor $(-)\otimes_AB\otimes_BC$, which is naturally isomorphic (but not equal) to $(-)\otimes_AC$. 
This weak presheaf can be strictified into a presheaf of categories. 
Applying the nerve functor to this  then defines an $\infty$-groupoid.
\begin{defn}\label{D:NwP} We denote
\begin{equation}\underline{\mathcal{N}wP_{\infty}-Alg}: R\mapsto \mathcal{N}wP_{\infty}-Alg(Mod_R^{cof}) \end{equation} for the \emph{simplicial presheaf of Dwyer-Kan classification spaces} given by the above construction, that is the coherent nerve of the presheaf of categories induced by the subcategory of weak-equivalences in $P_{\infty}-Alg$.

We also denote
\begin{equation}
 \underline{\mathcal{N}wCh_{\mathbb{K}}}: R\mapsto Mod_R^{cof}
\end{equation}
the simplicial presheaf  of quasi-coherent modules of \cite[Definition 1.3.7.1]{TV}.
\end{defn}
The loop space on $\underline{\mathcal{N}wP_{\infty}-Alg}$ based at a $P_{\infty}$-algebra $(X,\varphi)$ 
is then the strictification of the weak simplicial presheaf
\[
\Omega_{(X,\varphi)}\underline{\mathcal{N}wP_{\infty}-Alg}:R\mapsto \Omega_{(X\otimes R,\varphi\otimes R)}\mathcal{N}wP_{\infty}-Alg(Mod_R^{cof}).
\]
\begin{lem}
The pointwise loop space defined above is pointwise equivalent to the loop space functor in the projective model category of simplicial presheaves, 
where we consider simplicial presheaves with values in pointed simplicial sets. 
\end{lem}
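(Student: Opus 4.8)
The plan is to reduce the statement to the standard fact that, in the projective model structure, homotopy limits of simplicial presheaves are computed objectwise. Recall from~\eqref{eq:defLoopofFMP} that in any pointed model category the loop space functor is modelled, up to weak equivalence, by the homotopy pullback $\Omega_* F\simeq pt\times^h_F pt$ over the chosen base point. Here the ambient category is that of pointed simplicial presheaves over $CDGA_{\mathbb{K}}$ with its projective model structure, and the base point is the global section $A\mapsto(X\otimes A,\psi\otimes A)$ obtained by base change of the fixed $P_{\infty}$-algebra structure $\psi$ on $X$. So the first thing I would do is rewrite the model-categorical loop space as this homotopy pullback, turning the whole assertion into a comparison between forming a homotopy pullback in the presheaf category and forming it pointwise.

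Next I would invoke the defining property of the projective model structure, namely that weak equivalences and fibrations are detected objectwise. From this I would deduce that for every $A$ the evaluation functor $\mathrm{ev}_A$ from pointed simplicial presheaves to $sSet_*$ preserves fibrations and acyclic fibrations; since it is also a right adjoint (to the left Kan extension along $A$), it is right Quillen and hence preserves homotopy limits, in particular homotopy pullbacks, carrying the zero object to the point. Evaluating $pt\times^h_F pt$ at $A$ would then yield $pt\times^h_{F(A)}pt=\Omega_{(X\otimes A,\psi\otimes A)}\mathcal{N}wP_{\infty}-Alg(Mod_A^{cof})$, which is exactly the value of the pointwise loop space. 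Equivalently, and more concretely, I would compute the homotopy pullback by replacing one leg $pt\rightarrow F$ by an objectwise fibration and taking the strict objectwise pullback; as objectwise fibrations and weak equivalences are precisely the projective ones, this objectwise recipe genuinely computes the homotopy pullback in the presheaf category.

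The step I expect to require the most care --- and the main potential obstacle --- is that $\underline{\mathcal{N}wP_{\infty}-Alg}$ is \emph{a priori} only a weak presheaf, so that both the model-categorical loop space and the comparison above only make sense after strictifying it into a genuine simplicial presheaf, as in \cite[Definition I.56]{Ane}. I would check that the strictification is objectwise weakly equivalent to the original weak presheaf and that the chosen base point section survives strictification; then the pointwise loop space of the strictification is objectwise equivalent to $\Omega_{(X,\psi)}\underline{\mathcal{N}wP_{\infty}-Alg}$, and combining this with the objectwise computation of the homotopy pullback gives the desired pointwise equivalence. I expect the real subtlety to lie entirely in this bookkeeping of strictification and base points rather than in the homotopy-pullback comparison, which is formal once objectwise detection of weak equivalences and fibrations is available.
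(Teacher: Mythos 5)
Your argument is correct and lands on the same underlying fact as the paper's proof --- that the homotopy pullback $pt\times^h_F pt$ in the projective model structure on simplicial presheaves is computed objectwise --- but you justify it differently. The paper computes the homotopy pullback as the limit of a fibrant resolution of the cospan diagram in $SPsh(\mathcal{C})^I_{inj}$, uses the identity Quillen equivalence $SPsh(\mathcal{C})^I_{proj}\leftrightarrows SPsh(\mathcal{C})^I_{inj}$ to see that such a resolution is in particular a projective (hence objectwise) fibrant resolution, and concludes because limits of presheaves are objectwise. You instead observe that each evaluation functor $\mathrm{ev}_A$ is right Quillen (its left adjoint, left Kan extension along $\{A\}\hookrightarrow\mathcal{C}$, sends (trivial) cofibrations to projective (trivial) cofibrations) and preserves all weak equivalences, so it coincides with its derived functor and therefore commutes with homotopy pullbacks; your alternative concrete recipe (replace one leg $pt\to F$ by a projective, hence objectwise, fibration and take the strict objectwise pullback) is also valid, using right properness of $sSet$ to see that the objectwise strict pullback along a fibration is the objectwise homotopy pullback. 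Your route is arguably cleaner, since it avoids the slightly delicate comparison of injective versus projective fibrant diagrams that the paper relies on. One remark on your flagged subtlety: the strictification bookkeeping you worry about is not really part of this lemma in the paper's setup, because $\underline{\mathcal{N}wP_{\infty}-Alg}$ is \emph{defined} to be the strictification of the weak presheaf (objectwise equivalent to it by construction), and the base point is a genuine global section of the strictified presheaf; so while your caution is well placed, it does not create an additional gap to close here.
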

\begin{proof}
The pointed loop space functor on the projective model category of simplicial presheaves $SPsh(\mathcal{C})$ on a model category $\mathcal{C}$ is defined on any simplicial presheaf $F$ as the homotopy pullback $pt\times_F^hpt$.
In the model category setting, a homotopy pullback is computed as the limit of a fibrant resolution of the pullback diagram in $SPsh(\mathcal{C})^I_{inj}$, where $I$ is the small category $\{\bullet\rightarrow\bullet\leftarrow\bullet\}$ and $inj$ means that we consider this diagram category equipped with the injective model structure. Moreover, we have a Quillen equivalence
\[
SPsh(\mathcal{C})^I_{proj}\leftrightarrows SPsh(\mathcal{C})^I_{inj}
\]
where $SPsh(\mathcal{C})^I_{proj}$ is the projective model category of $I$-diagrams and the adjunction is given by the identity functors. In particular, this implies that every fibrant resolution in $SPsh(\mathcal{C})^I_{inj}$ is a fibrant resolution in $SPsh(\mathcal{C})^I_{proj}$. In the projective model structure $SPsh(\mathcal{C})^I_{proj}$, fibrations are the same as in the projective model category of functors $Fun(\mathcal{C}\times I, sSet)_{proj}$. So a fibrant resolution in $SPsh(\mathcal{C})^I_{inj}$ is pointwise a fibrant resolution in $sSet^I_{inj}$. Moreover, limits in $SPsh(\mathcal{C})$ are determined pointwise. This implies that the pullback of a fibrant resolution of a pullback diagram in simplicial presheaves is given, pointwise, by the pullback of a fibrant resolution of a pullback diagram in simplicial sets. That is, the homotopy pullback defining the loop space functor for simplicial presheaves, when valued at a given object of $\mathcal{C}$, gives the homotopy pullback defining the loop space functor for pointed simplicial sets.
\end{proof}

\smallskip

\noindent
\textit{Homotopy automorphism presheaves as loops over the presheaf of Dwyer-Kan classification spaces.} 
In the case of an operad $O$, there is an easy model for $\underline{haut}_{O_{\infty}}$. Indeed, in that case, $O_{\infty}$-algebras inherits a canonical
model category structure and $\underline{haut}_{O_{\infty}}$
 is the $\infty$-functor associated to a simplicial presheaf given  by 
 the simplicial monoid of homotopy automorphisms of $(X,\varphi)$ in the model category of $O_{\infty}$-algebras. That is
  the simplicial sub-monoid of self weak equivalences in the usual homotopy mapping space $Map_{O_{\infty}-Alg}(X,X)$ 
  (see for instance \cite[Chapter 17]{Hir}). Thus
this weak simplicial presheaf is 
\[
R\longmapsto haut_{O_{\infty}}(X\otimes R,\varphi\otimes R)_{Mod_R}
\]
where $haut_{O_{\infty}}(X\otimes R,\varphi\otimes R)_{Mod_R}$ is the simplicial monoid of homotopy automorphisms
of $(X\otimes R,\varphi\otimes R)\in O_{\infty}-Alg(Mod_R^{cof})$. Note that by definition, this homotopy automorphism are computed by taking 
a cofibrant resolution of
$(X\otimes R,\varphi\otimes R)$ to get a cofibrant-fibrant object (all algebras are fibrant), and then considering weak self-equivalences of it.
Our simplicial presheaf is then its strictification (see \cite[Section I.2.3.1]{Ane}). 

In the case of a general prop $P$ (and for properads as well), 
there is no model category structure anymore on the category of $P_{\infty}$-algebras. However, 
we can still define the simplicial monoid $\underline{L^HwP_{\infty}-Alg}(X,\varphi)$ of homotopy automorphisms 
in the simplicial or hammock localization (with respect to quasi-isomorphisms) of $P_{\infty}$-algebras, following Dwyer-Kan \cite{DK2,DK3}.
Note that by \cite{DK3}, in the case when $P_{\infty}-Alg$ is a model category (that is, $P$ is an operad), we have a homotopy equivalence 
\[\underline{haut}_{P_{\infty}}(X,\varphi)\, \simeq \, \underline{L^HwP_{\infty}-Alg}(X,\varphi)\]
(taking the model category construction for the left side of this equivalence), so the two constructions agree.
In both cases, these are models of the pointed loop space $\Omega_{(X,\varphi)}\underline{\mathcal{N}wP_{\infty}-Alg}$
on the simplicial presheaf of Dwyer-Kan classification spaces:
\begin{lem}\label{L:hautasLoop}
 Let $P_\infty$ be a cofibrant prop.  Then  $\underline{haut}_{P_{\infty}}(X,\varphi)$ is equivalent to \[
R\in dgArt_{\mathbb{K}}^{aug}\longmapsto \Omega_{(X\otimes R,\varphi\otimes R)}\Big(\mathcal{N}wP_{\infty}-Alg(Mod_R^{cof})\Big).
\]
Further, $\underline{haut}(X)$ is equivalent to \[
R\in dgArt_{\mathbb{K}}^{aug}\longmapsto \Omega_{X\otimes R} \Big(\mathcal{N}wMod_{R}^{cof}\Big).
\]
\end{lem}
\begin{proof}
This comes from the fact that, for any relative category $(C,W)$
and any object $X$ of $C$, the connected component of $X$ in $\mathcal{N}W$ is equivalent to the classifying space $BLW(X,X)$. Therefore
there is an equivalence $LW(X,X)\simeq \Omega_X\mathcal{N}W$ of simplicial monoids.
Hence we can define the presheaf of homotopy automorphisms, or self-weak equivalences, $\underline{haut}_{P_{\infty}}(X,\varphi)$ is equivalent to the 
following simplicial presheaf
\[
\underline{haut}_{P_{\infty}}(X,\varphi):R\in dgArt_{\mathbb{K}}^{aug}\longmapsto \Omega_{(X\otimes R,\varphi\otimes R)}\mathcal{N}wP_{\infty}-Alg(Mod_R^{cof}).
\]
The proof for $\underline{haut}(X)$ is similar.
\end{proof}

\smallskip

\noindent
\textit{Prestacks of algebras.} We will now prove that what we called the derived  group of automorphisms of an algebra is indeed a derived prestack group. 
As a first step, we need the following version of Rezk's homotopy pullback theorem \cite{Rez} :
\begin{prop}\label{P: Rezkgen}
Let $P_{\infty}$ be a cofibrant prop and $X$ be a chain complex. The forgetful functor $P_{\infty}-Alg\rightarrow Ch_{\mathbb{K}}$ induces a homotopy fiber sequence
\[
\underline{P_{\infty}\{X\}}\rightarrow \underline{\mathcal{N}wP_{\infty}-Alg} \rightarrow \underline{\mathcal{N}wCh_{\mathbb{K}}}
\]
of simplicial presheaves over augmented Artinian cdgas (see~\ref{D:NwP} for the notations).
\end{prop}

\begin{proof}
We explain briefly how \cite[Theorem 0.1]{Yal3} can be transposed in the context of simplicial presheaves of cdgas. The identification of the homotopy fiber of the forgetful map
\[
\underline{\mathcal{N}wP_{\infty}-Alg}\rightarrow \underline{\mathcal{N}wCh_{\mathbb{K}}}
\]
with the simplicial presheaf $\underline{P_{\infty}\{X\}}$ follows from the two following facts. 
First, we can identify it pointwise, for any cdga $A$, with $Map(P_{\infty}\otimes A, End_{X\otimes A}^{Mod_A})$, 
where $End_{X\otimes A}^{Mod_A}$ is the endomorphism prop of $X\otimes A$ in the category of $A$-modules.
This comes from the extension of \cite[Theorem 0.1]{Yal3} to $A$-linear $P_{\infty}$-algebras, 
which holds true trivially by replacing chain complexes of $\mathbb{K}$-modules by dg $A$-modules 
as target category in the universal functorial constructions of \cite[Section 2.2]{Yal3} 
($A$-modules are equipped with exactly the same operations than chain complexes which are needed in this construction: 
directs sums, suspensions, twisting cochains). 

Second, for any morphism of cdgas $f:A\rightarrow B$, the tensor product $(-)\otimes_A B$ induces a morphism of simplicial sets
\[
Map(P_{\infty}\otimes A, End_{X\otimes A}^{Mod_A}) \rightarrow Map(P_{\infty}\otimes B, End_{X\otimes B}^{Mod_B})
\]
fitting in a commutative square
\[
\xymatrix{
Map(P_{\infty}\otimes A, End_{X\otimes A}^{Mod_A})\ar[r]^-{\cong}\ar[d]_-{(-)\otimes_AB} & \underline{P_{\infty}\{X\}}(A)\ar[d]^-f \\
Map(P_{\infty}\otimes B, End_{X\otimes B}^{Mod_B})\ar[r]^-{\cong} & \underline{P_{\infty}\{X\}}(B) }
\]
(see for instance \cite[Section 3]{Yal2}) so that we get a morphism of homotopy fiber sequences
\[
\xymatrix{
\underline{P_{\infty}\{X\}}(A)\ar[r]\ar[d] & \underline{\mathcal{N}wP_{\infty}-Alg}(Mod_A)\ar[r]\ar[d]_-{(-)\otimes_AB} & \mathcal{N}wMod_A\ar[d]_-{(-)\otimes_AB} \\
\underline{P_{\infty}\{X\}}(B)\ar[r] & \underline{\mathcal{N}wP_{\infty}-Alg}(Mod_B)\ar[r] & \mathcal{N}wMod_B .}
\]
The right commutative squares shows that we have a morphism of simplicial presheaves $\underline{\mathcal{N}wP_{\infty}-Alg} \rightarrow \underline{\mathcal{N}wCh_{\mathbb{K}}}$. In the projective model structure, a model for the homotopy fiber of this morphism is given by the strict fiber of a fibration resolving this morphism. Fibrations are determined componentwise and the model category of simplicial sets is right proper, so this is a homotopy fiber componentwise. So, by the argument of the first paragraph, we know the homotopy fiber is given componentwise by $\underline{P_{\infty}\{X\}}(A)$. The left vertical map above is then given by construction by the base change $(-)\otimes_AB$, so the homotopy fiber is indeed the simplicial presheaf $\underline{P_{\infty}\{X\}}$.
\end{proof}

\begin{thm}\label{T:hautderivedgroup}
The simplicial presheaf $\underline{haut}_{P_{\infty}}(X,\varphi)$ is a derived prestack group in the sense of Definition~\ref{D:derivedgroup}\footnote{that
is an object of $Mon_{E_1}^{gp}(SPsh^{infcoh}_{\infty}((dgArt_{\mathbb{K}}^{aug})^{op}))$}.

In particular $\widehat{\underline{haut}_{P_{\infty}}(X,\varphi)_{id}}$ is a derived formal group.
\end{thm}
\begin{proof}
First, recall that $\underline{haut}_{P_{\infty}}(X,\varphi)$ is equivalent to $\Omega_{(X,\varphi)}\underline{\mathcal{N}wP_{\infty}-Alg}$, and that we already know it is a presheaf with values in grouplike simplicial monoids, hence a group object in simplicial presheaves.
Second, we use the simplicial presheaf version of Rezk's pullback theorem \cite{Rez} for algebras over properads, that is, 
the  homotopy fiber sequence
\[
\underline{P_{\infty}\{X\}}\rightarrow \underline{\mathcal{N}wP_{\infty}-Alg} \rightarrow \underline{\mathcal{N}wCh_{\mathbb{K}}}
\]
of simplicial presheaves over augmented Artinian cdgas, taken over the base point $X$ given by Proposition~\ref{P: Rezkgen}.
Considering the associated long fibration sequence of (iterated) loop spaces, this homotopy fiber sequence induces in particular a homotopy fiber sequence
\[
\Omega_{(X,\varphi)}\underline{\mathcal{N}wP_{\infty}-Alg} \rightarrow \Omega_X\underline{\mathcal{N}wCh_{\mathbb{K}}}\rightarrow  \underline{P_{\infty}\{X\}}
\]
hence the fiber sequence 
\[
\underline{haut}_{P_{\infty}}(X,\varphi) \rightarrow \underline{haut}(X)\rightarrow  \underline{P_{\infty}\{X\}}.
\] (by Lemma \ref{L:hautasLoop}).
Now we combine this result with Lemma~\ref{L:infcohppties}(1) to deduce that $\underline{haut}_{P_{\infty}}(X,\varphi)$ preserves weak equivalences 
and is infinitesimally cohesive. 
For this, we just have to check that the two right-hand terms of the fiber sequence satisfy these properties 
and use that this homotopy fiber sequence is in particular a pointwise homotopy fiber sequence. 
Concerning $\underline{haut}(X)$ this is already known from see \cite[Section 5.2]{Lur0}, 
and concerning $\underline{P_{\infty}\{X\}}$ this follows from its isomorphism with the Maurer-Cartan simplicial presheaf in Theorem~\ref{T:Yal2}.

In particular, $\widehat{\underline{haut}_{P_{\infty}}(X,\varphi)_{id}}$ is a derived formal group. 
Note that we could have directly proved this last statement by taking the formal completion of the fiber sequence above 
(that is, the componentwise homotopy fiber of this diagram over each appropriate base point), and then apply Lemma~\ref{L:infcohppties}(1) 
to $\widehat{\underline{haut}(X)}_{id}$ (which is a formal derived group) and $\widehat{\underline{P_{\infty}\{X\}}^{\varphi}}$ 
(which is a derived formal moduli problem).
\end{proof}
\begin{rem}\label{R:homotopyaction}
The classification space $\underline{\mathcal{N}wP_{\infty}-Alg}$ decomposes as a coproduct of the classifying spaces of homotopy automorphisms of $P_{\infty}$-algebras
\[
\underline{\mathcal{N}wP_{\infty}-Alg} \cong \coprod_{[Y,\phi]\in\pi_0\mathcal{N}wP_{\infty}-Alg}B\underline{haut}_{P_{\infty}-Alg}(Y,\phi)
\]
where $[Y,\phi]$ ranges over quasi-isomorphism classes of $P_{\infty}$-algebras. Restricting the homotopy pullback of Proposition~\ref{P: Rezkgen} to the connected component $B\underline{haut}(X)$ of the base space, we get a homotopy pullback
\[
\xymatrix{
\underline{P_{\infty}\{X\}}\ar[d]\ar[r] & \coprod_{[Y,\phi],Y\simeq X}B\underline{haut}_{P_{\infty}-Alg}(Y,\phi) \ar[d] \\
pt\ar@{^{(}->}[r] & B\underline{haut}(X)
}
\]
where the coproduct $\coprod_{[Y,\phi],Y\simeq X}$ ranges over $P_{\infty}$-algebras so that $Y\simeq X$ as complexes. So Rezk homotopy pullback theorem and its version above tells us that $\coprod_{[Y,\phi],Y\simeq X}B\underline{haut}_{P_{\infty}-Alg}(Y,\phi)$ can be seen as a homotopy quotient of $\underline{P_{\infty}\{X\}}$ by the action of $\underline{haut}(X)$. From a deformation theoretic perspective, this means that at a ``tangent level'', the deformation theory of $\varphi:P_{\infty}\rightarrow End_X$ corresponds to deformations of the $P_{\infty}$-algebra $(X,\varphi)$ which preserves the differential of the underlying complex $X$, whereas the deformations associated to $\underline{haut}_{P_{\infty}-Alg}(X,\varphi)$ deform the differential as well (that is, it takes into account the action of $\underline{haut}(X)$ on $\underline{P_{\infty}\{X\}}$). We are going to see in Section 3 how to formalize properly this idea.
\end{rem}
\begin{rem}\label{R:proximatemoduli}
Let us explain the relationship between the classifying presheaf of algebras and the derived formal group of homotopy automorphisms in the neighbourhood of the identity. Recall the construction
\[
\underline{haut}_{P_{\infty}}(X,\varphi):R\in dgArt_{\mathbb{K}}^{aug}\longmapsto \Omega_{(X\otimes R,\varphi\otimes R)}\mathcal{N}wP_{\infty}-Alg(Mod_R^{cof}),
\]
from which we deduce
\[
\widehat{\underline{haut}_{P_{\infty}}(X,\varphi)_{id}} = \Omega_*\widehat{\underline{\mathcal{N}wP_{\infty}-Alg}}_{(X,\varphi)}
\]
where $\Omega_*$ is the loop space for pointed functors as explained in Section~\ref{S:formalgeom}. Using the decomposition of the nerve of weak equivalences into classifying spaces of homotopy automorphisms pointed out in Remark~\ref{R:homotopyaction}, we see moreover that for any augmented Artinian cdga $R$, there is a decomposition
\[
\widehat{\underline{\mathcal{N}wP_{\infty}-Alg}}_{(X,\varphi)}(R) \cong \coprod_{[Y,\phi]|(Y,\phi)\otimes_R\mathbb{K}\simeq (X,\varphi)}Bhaut_{P_{\infty}-Alg(Mod_R)}(Y,\phi).
\]
Equivalently, $\widehat{\underline{\mathcal{N}wP_{\infty}-Alg}}_{(X,\varphi)}(R)$ is homotopy equivalent to the maximal $\infty$-subgroupoid of the $\infty$-category $P_{\infty}-Alg(Mod_R)[W_{qiso}^{-1}]$ generated by $R$-linear $P_{\infty}$-algebras $(Y,\phi)$ such that $(Y,\phi)\otimes_R\mathbb{K}\simeq (X,\varphi)$, that is
\[
\widehat{\underline{\mathcal{N}wP_{\infty}-Alg}}_{(X,\varphi)}(R) \cong P_{\infty}-Alg(Mod_R)[W_{qiso}^{-1}]\times^h_{P_{\infty}-Alg(Ch_{\mathbb{K}})[W_{qiso}^{-1}]}\{(X,\varphi)\}.
\]

The space $\widehat{\underline{\mathcal{N}wP_{\infty}-Alg}}_{(X,\varphi)}$ encapsulates the whole deformation theory of $(X,\varphi)$ in the $\infty$-category $P_{\infty}-Alg[W_{qiso}^{-1}]$ as we can think of it, that is, an $R$-deformation of $(X,\varphi)$ is an $R$-linear $P_{\infty}$-algebra whose restriction modulo $R$ is quasi-isomorphic to $(X,\varphi)$, and equivalences between $R$-deformations are defined by compatible $R$-linear quasi-isomorphisms whose restriction modulo $R$ is homotopic to $Id_{(X,\varphi)}$. This is the natural generalization, to the differential graded setting, of classical deformations of degree zero algebras. Although it is not clear that such a construction provides a derived formal moduli problem, one can however associates to it the derived formal group 
$\underline{haut}_{P_{\infty}}(X,\varphi)$ via a loop space construction, and by the general formalism explained in Section~\ref{S:formalgeom} we have
\[
\mathbb{T}_{\widehat{\underline{haut}_{P_{\infty}}(X,\varphi)_{id}}} = Lie(L(\widehat{\underline{\mathcal{N}wP_{\infty}-Alg}}_{(X,\varphi)}))
\]
where $L$ is the completion of $\widehat{\underline{\mathcal{N}wP_{\infty}-Alg}}_{(X,\varphi)}$ in a formal moduli problem.
Another way to state this is that in general $\widehat{\underline{\mathcal{N}wP_{\infty}-Alg}}_{(X,\varphi)}$ is $1$-proximate in the sense of \cite{Lur0}.
\end{rem}
\begin{rem}\label{R:VectStack}
In the special case of operads acting on algebras concentrated in degree $0$, we can say more.
Let $A$ be a $P$-algebra in vector spaces whose underlying vector space is of finite dimension, then by \cite[Prop.2.2.6.8]{TV},
the classifying presheaf $\underline{\mathcal{N}wP-Alg}$ is actually a derived $1$-geometric stack, 
which implies by  \cite[Corollary 6.5]{Lur01} and \cite[Lemma 2.1.7]{Lur02} that its restriction to $dgArt_{\mathbb{K}}^{aug}$ is infinitesimally cohesive.
Consequently $\widehat{\underline{\mathcal{N}wP-Alg}}_A$ is already a derived formal moduli problem in this case 
and \[Lie \big({\widehat{\underline{haut}_P(A)_{id}}}\big) \, \cong \, Lie(\widehat{\underline{\mathcal{N}wP-Alg}}_A)\] by Proposition~\ref{P:LieofGroup}.

In the special case where $P$ is a non-positively graded dg operad and $A$ a non-positively graded dg algebra, this classifying presheaf is not known to be a derived geometric stack, nevertheless it is homotopy equivalent to the nerve of the tangent complex of $A$ according to \cite[Theorem 2.3.4]{Hin3}.
\end{rem}
This Lie algebra recovers in particular various known deformation complexes in the litterature, once one has an explicit formula to compute it, 
as we are going to detail in Section~\ref{S:Examples}.

\subsection{The fiber sequence of deformation theories}
We now relate precisely the two moduli problems of algebraic structures, that is those  governed by the mapping space $\underline{P_{\infty}\{X\}}$
and the homotopy automorphisms space $\underline{haut_{P_{\infty}-Alg}}(X,\varphi)$ (Definitions~\ref{D:haut} and~\ref{D:spresheafofAlgstructure}).
\begin{thm}
\label{P: hofibgroups}
There is a homotopy fiber sequence of derived prestack groups
\[
\Omega_{\varphi}\underline{P_{\infty}\{X\}}\rightarrow \underline{haut_{P_{\infty}-Alg}}  (X,\varphi) \rightarrow \underline{haut}(X),
\]
hence a homotopy fiber sequence of derived formal groups
\[
\widehat{\Omega_{\varphi}\underline{P_{\infty}\{X\}}}\rightarrow \widehat{\underline{haut_{P_{\infty}-Alg}}}(X,\varphi)_{id} \rightarrow \widehat{\underline{haut}(X)_{id}},
\]
and equivalently of their associated $L_{\infty}$-algebras
\[
g_{P,X}^{\varphi}\rightarrow Lie(\underline{haut_{P_{\infty}-Alg}}(X,\varphi))\rightarrow Lie(\underline{haut}(X)).
\]
\end{thm}
\begin{proof}
Recall (see~\eqref{eq:defLoopofFMP}) that the pointed loop space functor is defined 
on any simplicial presheaf $F$ as the homotopy pullback $pt\times_F^hpt$. It thus commutes with homotopy fibers, 
and in particular the loop space $\infty$-functor commutes with fibers in the $\infty$-category of simplicial presheaves.
In the fiber sequence of Proposition~\ref{P: Rezkgen}, we choose $\varphi$ as the base point on the left,  $(X,\varphi)$ in the middle,
and $X$ on the right.
Since fibers in the $\infty$-category of presheaves valued in simplicial monoids 
are determined in the underlying $\infty$-category of simplicial presheaves,  
applying the pointed loop space $\infty$-functor with respect to these base points, we deduce a fiber sequence of derived prestack groups
\[
\Omega_{\varphi}\underline{P_{\infty}\{X\}}\rightarrow \underline{haut_{P_{\infty}-Alg}}(X,\varphi) \rightarrow \underline{haut}(X)
\]
(using that $\Omega\circ B\simeq Id$). Hence, we get 
 a fiber sequence of derived formal groups
\[
\widehat{\Omega_{\varphi}\underline{P_{\infty}\{X\}}}\rightarrow \widehat{\underline{haut_{P_{\infty}-Alg}}(X,\varphi)_{id}} \rightarrow \widehat{\underline{haut}(X)_{id}}
\]
(using that $\Omega\widehat{F_x}\simeq \widehat{\Omega_xF}$ for an infinitesimally cohesive $\infty$-functor $F$ and $x\in F(\mathbb{K})$, 
and in particular that $\widehat{G_e}\simeq \Omega\widehat{BG}$ for a derived prestack group $G$). The corresponding fiber sequence of Lie algebras aasociated 
to this formal derived problems
identifies with the desired one 
\[
g_{P,X}^{\varphi}\rightarrow Lie(\underline{haut_{P_{\infty}-Alg}}(X,\varphi))\rightarrow Lie(\underline{haut}(X))
\]by Lemma~\ref{L:hautasLoop} and using equivalence~\eqref{eq:TofOmega}
\end{proof}

\subsection{Equivalent deformation theories for equivalent (pre)stacks of algebras}

In derived algebraic geometry,
an equivalence between two derived Artin stacks $F$ and $G$ induces a weak equivalence between the tangent complex over a given point of $F$  and the tangent complex over its image in $G$ \cite{TV}. 
We now prove similar statement about the tangent Lie algebras of our formal moduli problems of algebraic structures.

Recall the presheaf of categories given by the
 $\infty$-functor
\begin{eqnarray*}
\underline{P_{\infty}-Alg}:cdga_{\mathbb{K}} & \rightarrow & Cat_{\infty} \\
R & \longmapsto & P_{\infty}-Alg(Mod_R)[W_{qiso}^{-1}]
\end{eqnarray*}
where $Cat_{\infty}$ is the $\infty$-category of $\infty$-categories. 

The idea is that \cite[Theorem 0.1]{Yal3} implies that the formal moduli problem $\underline{P_{\infty}\{X\}}^{\varphi}$ is \lq\lq{}tangent\rq\rq{} 
over $(X,\varphi)$ to the Dwyer-Kan classification space of the $\infty$-category of $P_{\infty}$-algebras (see~\ref{SS:DKSpace}). 

\smallskip

More precisely, recall that $F:\underline{P_{\infty}-Alg}\stackrel{\sim}{\longrightarrow}\underline{Q_{\infty}-Alg} $ being an an equivalence 
of presheaves of $\infty$-categories means that, for every augmented Artinian cdga $R$, 
\[
F(R):P_{\infty}-Alg(Mod_R)[W_{qiso}^{-1}]\stackrel{\sim}{\longrightarrow} Q_{\infty}-Alg(Mod_R)[W_{qiso}^{-1}].
\]
is an equivalence of $\infty$-categories. 
Relying on our previous results, we prove:
\begin{thm}\label{T:equivhautLie}
Let $F$ be an equivalence of presheaves of $\infty$-categories
\[
F:\underline{P_{\infty}-Alg}\stackrel{\sim}{\longrightarrow}\underline{Q_{\infty}-Alg}.
\]
Then $F$ induces an equivalence of derived formal moduli problems
\[
B_{fmp}\widehat{\underline{haut}_{P_{\infty}-Alg}(X,\varphi)}_{Id_{(X,\varphi)}}\stackrel{\sim}{\rightarrow} B_{fmp}\widehat{\underline{haut}_{Q_{\infty}-Alg}(F(X,\varphi))}_{Id_{(X,\varphi)}},
\]
equivalently an equivalence of the associated $L_{\infty}$-algebras
\[
Lie(\underline{haut}_{P_{\infty}-Alg}(X,\varphi))\stackrel{\sim}{\rightarrow}Lie(\underline{haut}_{Q_{\infty}-Alg}(F(X,\varphi))).
\]
\end{thm}
\begin{proof}
The equivalence $F$ induces an equivalence of loop spaces
\[
\Omega_{(X,\varphi)}\underline{P_{\infty}-Alg} \stackrel{\sim}{\rightarrow} \Omega_{F(X,\varphi)}\underline{Q_{\infty}-Alg}
\]
hence an equivalence
\[
\underline{haut}_{P_{\infty}-Alg}(X,\varphi)\rightarrow \underline{haut}_{Q_{\infty}-Alg}(F(X,\varphi)).
\]
These presheaves are derived prestack groups by Theorem \ref{T:hautderivedgroup}, so this induces in turn an equivalence between their formal completions at the identity and equivalently between the $L_{\infty}$-algebras corresponding to these derived formal groups.
\end{proof}

In the case where $F$ commutes with the maps induced by forgetful functors, we can insert this equivalence in an equivalence of fiber sequences:
\begin{thm}\label{T:equivfiberseq}
Let $F$ be an equivalence of presheaves of $\infty$-categories
\[
F:\underline{P_{\infty}-Alg}\stackrel{\sim}{\longrightarrow}\underline{Q_{\infty}-Alg}.
\]
Let us suppose that $F$ commutes with the maps induced by the forgetful functors of these $\infty$-categories of algebras.
Then $F$ induces an equivalence of fiber sequences of derived formal moduli problems
\[
\xymatrix{
\underline{P_{\infty}\{X\}}^{\varphi}\ar[d]^-{\sim}\ar[r] & B_{fmp}\widehat{\underline{haut}_{P_{\infty}-Alg}(X,\varphi)}_{Id_{(X,\varphi)}}\ar[d]^-{\sim}\ar[r] & B_{fmp}\widehat{\underline{haut}(X)}_{Id_X}\ar[d]^-=\\
\underline{Q_{\infty}\{F(X)\}}^{F(\varphi)}\ar[r]  & B_{fmp}\widehat{\underline{haut}_{Q_{\infty}-Alg}(F(X,\varphi))}_{Id_{(X,\varphi)}}\ar[r] & B_{fmp}\widehat{\underline{haut}(X)}_{Id_X}
}
\]
where $F(\varphi)$ is the $Q_{\infty}$-algebra structure on the image of  $(X,\varphi)$  under $F$ (and $B_{fmp}$ is given by~\ref{eq:defBfmp}). Equivalently,
$F$ induces an equivalence of fiber sequences of the associated $L_{\infty}$-algebras
\[
\xymatrix{
g_{P,X}^{\varphi}\ar[d]^-{\sim}\ar[r] & Lie(\underline{haut}_{P_{\infty}-Alg}(X,\varphi)) \ar[d]^-{\sim}\ar[r] & Lie(\underline{haut}(X))\ar[d]^-=\\
g_{Q,F(X)}^{F(\varphi)}\ar[r]  & Lie(\underline{haut}_{Q_{\infty}-Alg}(F(X,\varphi)))\ar[r] & Lie(\underline{haut}(X))
}.
\]
\end{thm}
\begin{proof}
Let $F:\underline{P_{\infty}-Alg}\rightarrow \underline{Q_{\infty}-Alg}$ be an equivalence of presheaves of $\infty$-categories.
We have a commutative triangle
\[
\xymatrix{
\underline{P_{\infty}-Alg} \ar[rr]^-{F}\ar[dr]_-{U} & & \underline{Q_{\infty}-Alg} \ar[dl]_-{U} \\
 & \underline{Ch_{\mathbb{K}}} & }.
\]
Applying the loop space functor~\eqref{eq:defLoopofFMP} at the appropriate base points we get the commutative triangle
\[
\xymatrix{
\Omega_{(X,\varphi)}\underline{P_{\infty}-Alg} \ar[rr]^-{\sim}\ar[dr] & & \Omega_{F(X,\varphi)}\underline{Q_{\infty}-Alg} \ar[dl] \\
 & \Omega_X\underline{Ch_{\mathbb{K}}} & }.
\]
But a based loop space at a point of an $\infty$-category is the homotopy automorphims grouplike monoid of this point, so that this triangle is actually the triangle of derived prestack groups
\[
\xymatrix{
\underline{haut}_{P_{\infty}-Alg}(X,\varphi) \ar[rr]^-{\sim}\ar[dr] & & \underline{haut}_{Q_{\infty}-Alg}(F(X,\varphi)) \ar[dl] \\
 & \underline{haut}(X) & }.
\]
By Theorem~\ref{P: hofibgroups}, we get the equivalence of homotopy fiber sequences of derived prestack groups
\[
\xymatrix{
\Omega_{\varphi}\underline{P_{\infty}\{X\}} \ar[d]\ar[rr]^-{\sim} & & \Omega_{F(\varphi)}\underline{Q_{\infty}\{F(X)\}}\ar[d] \\
\underline{haut}_{P_{\infty}-Alg}(X,\varphi) \ar[rr]^-{\sim}\ar[dr] & & \underline{haut}_{Q_{\infty}-Alg}(F(X,\varphi)) \ar[dl] \\
 & \underline{haut}(X) & }.
\]
hence an equivalence of homotopy fiber sequences of the corresponding derived formal groups obtained by completion at the appropriate base points. This equivalence of fiber sequences gives an equivalence of fiber sequences of the corresponding Lie algebras by the Lurie-Pridham equivalence theorem.
\end{proof}
\begin{rem}
There is also a \lq\lq{}strict\rq\rq{} version of this theorem. Let us consider a morphism of weak presheaves of relative categories, that is, given for each cdga $A$ by a morphism of relative categories
\[
F(A):(P_{\infty}-Alg(Mod_A),W_{qiso})\rightarrow (Q_{\infty}-Alg(Mod_A),W_{qiso}).
\]
Let us suppose that $F$ induces an equivalence of presheaves of classification spaces
\[
\mathcal{N}wF:\underline{\mathcal{N}wP_{\infty}-Alg}\stackrel{\sim}{\longrightarrow}\underline{\mathcal{N}wQ_{\infty}-Alg}.
\]
By \cite[Section 3.3]{Yal}, this means that $F$ induces an equivalence of weak presheaves of $\infty$-categories as in Theorem~\ref{T:equivfiberseq}. 
Then, we can mimick the proof of  Theorem~\ref{T:equivfiberseq} as follows: 
we replace the presheaves of $\infty$-categories by these presheaves of classification spaces, take based loop spaces which gives back the homotopy automorphisms as well, and apply Theorem~\ref{P: hofibgroups}. 
\end{rem}
\begin{rem}
The commutation with the forgetful functors in Theorem \ref{T:equivfiberseq} is what happens for example when one takes for $F$ the restriction functor associated to a weak equivalence of cofibrant props $P_{\infty}\stackrel{\sim}{\rightarrow}Q_{\infty}$.
\end{rem}
\begin{rem}
In the general case, when $F$ is any equivalence of presheaves of $\infty$-categories, then the argument line of the proof of Theorem \ref{T:equivfiberseq} cannot hold true anymore. A priori, we only have two fiber sequences
\[
\underline{P_{\infty}\{X\}}^{\varphi}\rightarrow B_{fmp}\widehat{\underline{haut}_{P_{\infty}-Alg}(X,\varphi)}_{Id_{(X,\varphi)}}\rightarrow B_{fmp}\widehat{\underline{haut}(X)}_{Id_X}
\]
and
\[
\underline{Q_{\infty}\{F(X)\}}^{F(\varphi)}\rightarrow B_{fmp}\widehat{\underline{haut}_{Q_{\infty}-Alg}(F(X,\varphi))}_{Id_{(X,\varphi)}}\rightarrow B_{fmp}\widehat{\underline{haut}(F(X))}_{Id_{F(X)}}
\]
whose middle terms are weakly equivalent by Theorem \ref{T:equivhautLie}. However, the relation between $B_{fmp}\widehat{\underline{haut}(X)}_{Id_X}$ and $B_{fmp}\widehat{\underline{haut}(F(X))}_{Id_{F(X)}}$ requires a more careful analysis depending on $F$.
\end{rem}

\section{The tangent Lie algebra of homotopy automorphims}\label{S:TgtLie} 

The goal of this section and the next one is to make Theorem~\ref{P: hofibgroups} more explicit.

In Section \ref{S:TgtLie}, we prove that $Lie(\underline{haut}_{P_{\infty}-Alg}(X,\varphi))$ is the tangent Lie algebra of a homotopy quotient induced by the homotopy action of $haut(X)$ on $P_{\infty}\{X\}$ that we mentionned in Remark~\ref{R:homotopyaction}. This tangent Lie algebras fits in a fiber sequence whose later term is nothing but $End(X)=Hom_{Ch_{\mathbb{K}}}(X,X)$ equipped with the commutator of the composition product. In a few words, the tangent Lie algebra of homotopy automorphisms takes into account the action of the automorphisms of the complex $X$ on the Maurer-Cartan elements of $g_{P,X}^{\varphi}$, that is, on the space of $P_{\infty}$-algebra structures on $X$. 

In Section~\ref{S:Plus}, we further provide an explicit description in properadic terms of that homotopy Lie algebra in terms of a certain plus construction $P_{\infty}^+$, which parametrizes twistings of the differential of the complex $X$ compatible with the $P_{\infty}$-algebra structures on $X$. This is an explicit combinatorial way of handling simultaneously compatible deformations of the $P_{\infty}$-algebra structure and the differential, that can be described moreover as a twisted semi-direct product of the two extremal terms of the aforementioned fiber sequence.

More precisely, we are going to construct the diagram of equivalence of fiber sequences below
\[
\xymatrix{
g_{P,X}^{\varphi}\ar[r]^-{\sim}\ar[d] & g_{P,X}^{\varphi}\ar[r]^-{\sim}\ar[d] & g_{P,X}^{\varphi}\ar[d] \\
g_{P^+,X}^{\varphi^+}\ar[r]^-{\sim}\ar[d] & Lie(\Omega_{[\varphi]}(\underline{P_{\infty}\{X\}}//\underline{haut}(X)))\ar[r]^-{\sim}\ar[d] & Lie(\underline{haut}_{P_{\infty}}(X,\varphi))\ar[d] \\
End(X)\ar[r]^-{\sim} & End(X)\ar[r]^-{\sim} & End(X)
}
\]
where the middle term is defined in the next section~\ref{SS:ifntyaction}.
It turns out that this is the tangent incarnation of the non trivial action of $\underline{haut}(X)$ on the moduli space $P_{\infty}\{X\}$ at a topological level. Taking this action into account in the deformation theory of $(X,\varphi)$ means, on the one hand deforming the $P_{\infty}$-algebra structure with compatible deformations of the differential (equivalence of the middle fiber sequence with the left one), on the other hand deforming $(X,\varphi)$ \emph{in the $\infty$-category of $P_{\infty}$-algebras} (equivalence of the middle fiber sequence with the right one).

The plan is as follows. First, we construct two equivalences of fiber sequences of derived groups fitting in the diagram
\[
\xymatrix{
\Omega_{\varphi}\underline{P_{\infty}\{X\}}\ar[r]^-{\sim}\ar[d] & \Omega_{\varphi}\underline{P_{\infty}\{X\}}\ar[r]^-{\sim}\ar[d] & \Omega_{\varphi}\underline{P_{\infty}\{X\}}\ar[d] \\
\Omega_{\varphi^+}\underline{P^+_{\infty}\{X\}}\ar[r]^-{\sim}\ar[d] & \Omega_{[\varphi]}(\underline{P_{\infty}\{X\}}//\underline{haut}(X))\ar[r]^-{\sim}\ar[d] & \underline{haut}_{P_{\infty}}(X,\varphi))\ar[d] \\
\underline{haut}(X)\ar[r]^-{\sim} & \underline{haut}(X)\ar[r]^-{\sim} & \underline{haut}(X)
},
\]
where $\underline{P_{\infty}\{X\}}//\underline{haut}(X)$ is the appropriate homotopy quotient in the $\infty$-category of infinitesimally cohesive $\infty$-presheaves over $(dgArt_{\mathbb{K}}^{aug})^{op}$.
Secondly, the desired fiber sequence of $L_{\infty}$-algebras is induced by this diagram (taking, as usual in this paper, completions at identities to get equivalences of fiber sequences of derived formal groups).

\subsection{$\infty$-actions in infinitesimally cohesive presheaves}
\label{SS:ifntyaction}
In this section, the ambient $\infty$-category  is 
$SPsh^{infcoh}_{\infty}((dgArt_{\mathbb{K}}^{aug})^{op})$ and derived prestack groups are precisely the group objects (see \ref{D:derivedgpobkects}) in it.
This is a particular case of infinitesimally cohesive $\infty$-topos, where the theory of principal $\infty$-bundles developped in \cite{NSS,NSS2} 
fully applies.
In this setting, the general notion of $\infty$-action of a group object $G$ in an $\infty$-category on another object $X$ 
provides a homotopy quotient\footnote{which is the same as the quotient in $\infty$-stacks} $X//G$.
This homotopy quotient comes naturally equipped with a homotopy fiber sequence
\[
X\rightarrow X//G\rightarrow BG
\]
with  fiber $X$  associated to the universal $G$-principal $\infty$-bundle $\bullet\rightarrow BG$. The map $X//G\rightarrow BG$ 
is the classifying morphism of the action of $G$ on $X$. This is an analogue of the usual quotient stack by a group stack action of~\cite{GiNo}.
\begin{rem} When $G$ is presented by a simplicial presheaf in grouplike simplicial monoids (which is a model for derived prestack groups), 
the homotopy quotient  $X//G$ is computed by 
  the geometric realization of the simplicial action groupoid
\[
\cdots G\times G\times X\triplerightarrow G\times X\rightrightarrows X.
\]
(see for example \cite{Jar} in the case of group actions in simplicial presheaves).
\end{rem}

Let $G$ be a group object in $SPsh^{infcoh}_{\infty}((dgArt_{\mathbb{K}}^{aug})^{op})$, i.e., a derived prestack group.
\begin{prop}\label{P:fiberseqofquotientstack}
For any $X$ equipped with an $\infty$-action of a derived prestack group object $G$, there is a fiber sequence of homotopy Lie algebras
\[
Lie({\Omega_*X})\rightarrow Lie({\Omega_*(X//G)})\rightarrow Lie(G)
\]
\end{prop}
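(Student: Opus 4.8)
The plan is to obtain the statement by applying the pointed loop space functor $\Omega_*$ and then formal completion at the unit to the defining fiber sequence of the homotopy quotient, and finally transporting the resulting sequence across the equivalence between derived formal groups and $L_{\infty}$-algebras; this closely parallels the argument of Theorem~\ref{P: hofibgroups}. First I would start from the homotopy fiber sequence
\[
X \rightarrow X//G \rightarrow BG
\]
in $SPsh^{infcoh}_{\infty}((dgArt_{\mathbb{K}}^{aug})^{op})$ attached to the universal $G$-principal $\infty$-bundle $\bullet \rightarrow BG$, which is provided by the theory of principal $\infty$-bundles of \cite{NSS,NSS2} in this infinitesimally cohesive $\infty$-topos (so that $X//G$ and $BG$ are themselves objects of this $\infty$-category). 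We base $BG$ at its canonical point, $X//G$ at the image of the basepoint of $X$, and $X$ at that basepoint, so the whole sequence is pointed compatibly. Applying $\Omega_*$, which by Proposition~\ref{P:PropertiesofOmega}(1) is limit preserving and lands in derived prestack groups, and using $\Omega_* pt \simeq pt$, converts this into a homotopy fiber sequence of grouplike $E_1$-monoids
\[
\Omega_* X \rightarrow \Omega_*(X//G) \rightarrow \Omega_* BG .
\]
Since $\Omega \circ B \simeq \mathrm{Id}$ on group objects, we have $\Omega_* BG \simeq G$, giving a homotopy fiber sequence of derived prestack groups $\Omega_* X \rightarrow \Omega_*(X//G) \rightarrow G$.

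Next I would take the formal completions at the respective units. Because $\Omega_*$ commutes with formal completion, i.e. $\widehat{(\Omega_x F)_e} \simeq \Omega_* \widehat{F_x}$ (as recalled just before Lemma~\ref{L:formalofderivedGp} and used in the proof of Theorem~\ref{P: hofibgroups}), and because completion at a point is a pointwise homotopy fiber, the fiber sequence above is sent term by term to a pointwise fiber sequence of completions. By Lemma~\ref{L:formalofderivedGp} each term is a derived formal group, and by stability of infinitesimally cohesive presheaves under limits (Lemma~\ref{L:infcohppties}(1)) the homotopy fibers remain in the category, so I obtain a fiber sequence of derived formal groups
\[
\widehat{(\Omega_* X)}_1 \rightarrow \widehat{(\Omega_*(X//G))}_1 \rightarrow \widehat{G}_1 .
\]
Applying the composite equivalence $Lie(-) = \mathfrak{L}_{B_{fmp}(-)}$ between derived formal groups and $L_{\infty}$-algebras (the equivalence $B_{fmp}$ of~\eqref{eq:defBfmp} followed by the Lurie--Pridham equivalence $\mathfrak{L}_{(-)}$ of Definition~\ref{Def:LieFMP}), which preserves fiber sequences, and unwinding Definition~\ref{D:TgtofGpPreStack} (namely $Lie(H) = Lie(\widehat{H}_1)$ for a derived prestack group $H$), yields the desired homotopy fiber sequence of $L_{\infty}$-algebras
\[
Lie(\Omega_* X) \rightarrow Lie(\Omega_*(X//G)) \rightarrow Lie(G) .
\]

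The main obstacle I expect is not the formal manipulation of the fiber sequence, which is routine given the earlier results, but rather the bookkeeping ensuring that the homotopy quotient $X//G$ and the classifying object $BG$ are genuinely computed \emph{inside} the infinitesimally cohesive $\infty$-topos $SPsh^{infcoh}_{\infty}((dgArt_{\mathbb{K}}^{aug})^{op})$, so that the sequence $X \rightarrow X//G \rightarrow BG$ lives there and $\Omega_*(X//G)$ is a bona fide derived prestack group, rather than being formed pointwise as a naive colimit of presheaves. As Remark~\ref{R:connectfmp} warns, delooping-type constructions such as $B_{fmp}$ are \emph{not} computed pointwise, so one must be careful to perform the quotient and its delooping in the correct $\infty$-category before looping and completing. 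Once this is secured, Lemma~\ref{L:infcohppties}(1) guarantees that each subsequent homotopy fiber stays in the category, and the remainder is a direct transport across the equivalences.
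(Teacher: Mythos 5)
Your proposal is correct and follows essentially the same route as the paper: the paper's proof likewise applies the pointed loop space functor (as a homotopy pullback) to the fiber sequence $X\rightarrow X//G\rightarrow BG$, identifies $\Omega_*BG\simeq G$, and then passes to tangent Lie algebras via Definition~\ref{D:TgtofGpPreStack}. Your additional care about completions and about the quotient being formed inside $SPsh^{infcoh}_{\infty}((dgArt_{\mathbb{K}}^{aug})^{op})$ only makes explicit what the paper's setup in Section~\ref{SS:ifntyaction} already guarantees.
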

\begin{proof}
Since the loop space is an homotopy pullback, the fiber sequence
\[
X\rightarrow X//G\rightarrow BG
\]
yields a fiber sequence of derived groups
\[
\Omega_*X\rightarrow \Omega_*(X//G)\rightarrow G
\]
hence the desired fiber sequence of tangent Lie algebras by~\ref{D:TgtofGpPreStack}.
\end{proof}
We still consider an object $X$ with an $\infty$-action of a derived prestack group  $G$. Recall the completion of a derived prestack group~\ref{L:formalofderivedGp}.
\begin{lem}\label{L:splityieldssemidirect} 
 Assume that there exists a section of the (induced) projection map $\widehat{\Omega_*(X//G)}_{x}\stackrel{\pi}\rightarrow \widehat{G}_{1}$, 
 that is a  derived formal group morphism 
 $s:\widehat{G}_1\to \widehat{\Omega_* (X//G)}_{x}$   such that $\pi \circ s$ is equivalent to the identity. Then 
 there is an equivalence of $L_\infty$ algebras 
 \[Lie\big(\Omega_*(X//G)\big) \; \cong \; Lie(\Omega_* X) \rtimes  Lie(G) \]
 and the fiber sequence of proposition~\ref{P:fiberseqofquotientstack} identifies with the semi-direct product one.
\end{lem}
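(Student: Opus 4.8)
The plan is to push the hypothesised split fiber sequence of derived formal groups through the equivalence between derived formal groups and $L_{\infty}$-algebras, thereby reducing the statement to a purely algebraic one: a \emph{split} homotopy fiber sequence of $L_{\infty}$-algebras is equivalent to the semi-direct product determined by the induced adjoint action. Throughout write $\mathfrak{h}:=Lie(\Omega_* X)$, $\mathfrak{e}:=Lie(\Omega_*(X//G))$ and $\mathfrak{q}:=Lie(G)$.

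First I would invoke Proposition~\ref{P:fiberseqofquotientstack} to obtain the fiber sequence of $L_{\infty}$-algebras
\[
\mathfrak{h}\longrightarrow \mathfrak{e}\stackrel{\pi_*}{\longrightarrow}\mathfrak{q},
\]
whose map $\pi_*$ is $Lie$ applied to the completed projection $\widehat{\Omega_*(X//G)}_{x}\stackrel{\pi}{\to}\widehat{G}_{1}$. Since $Lie=\mathfrak{L}_{(-)}\circ B_{fmp}$ is an equivalence of $\infty$-categories (the composite of the equivalence $B_{fmp}$ of~\eqref{eq:defBfmp} with the Lurie--Pridham equivalence $\mathfrak{L}_{(-)}$ of Definition~\ref{Def:LieFMP}), it is in particular functorial, so the section $s$ is carried to a morphism $\sigma:=Lie(s):\mathfrak{q}\to\mathfrak{e}$ of $L_{\infty}$-algebras with $\pi_*\circ\sigma=Lie(\pi\circ s)\simeq\mathrm{id}_{\mathfrak{q}}$. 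Thus the fiber sequence above is split.

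Next I would produce the $L_{\infty}$-action giving rise to the semi-direct product. Composing $\sigma$ with the adjoint action $\mathrm{ad}:\mathfrak{e}\to Der(\mathfrak{e})$ of Example~\ref{ex:semidirectofLinfty}, and using that $\mathfrak{h}$ is the homotopy fiber of $\pi_*$---so that $\pi_*[\sigma(x),-]\simeq[x,\pi_*(-)]$ vanishes on $\mathfrak{h}$, whence $\mathrm{ad}\,\sigma(\mathfrak{q})$ preserves $\mathfrak{h}$ up to coherent homotopy---yields an $L_{\infty}$-morphism $\mathfrak{q}\to Der(\mathfrak{h})$. This is exactly the datum producing, through Example~\ref{ex:semidirectofLinfty}, the semi-direct product $\mathfrak{h}\rtimes\mathfrak{q}$ together with its split fiber sequence $\mathfrak{h}\to\mathfrak{h}\rtimes\mathfrak{q}\to\mathfrak{q}$. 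The inclusion of the fiber together with $\sigma$ then assembles into a comparison morphism $\mathfrak{h}\rtimes\mathfrak{q}\to\mathfrak{e}$ of fiber sequences which is the identity on the fiber $\mathfrak{h}$ and on the base $\mathfrak{q}$; since the splitting forces the underlying complex of $\mathfrak{e}$ to be $\mathfrak{h}\oplus\mathfrak{q}$, the five lemma applied to the long exact sequences in cohomology shows this comparison is a quasi-isomorphism, hence an equivalence of $L_{\infty}$-algebras. By construction it identifies the fiber sequence of Proposition~\ref{P:fiberseqofquotientstack} with the semi-direct product one, which is the second assertion.

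The main obstacle is the coherence issue in the previous paragraph: $\sigma$ is an $L_{\infty}$-morphism splitting $\pi_*$ only up to homotopy, so the informal ``restriction of $\mathrm{ad}\,\sigma$ to $\mathfrak{h}$'' must be upgraded to an honest $L_{\infty}$-morphism $\mathfrak{q}\to Der(\mathfrak{h})$ carrying all higher homotopies. To rigidify I would dualise through the Chevalley--Eilenberg cochain functor $C_{CE}^*$, which is fully faithful (this is precisely the content of the Lurie--Pridham equivalence), realise the split situation as the cofiber sequence of augmented cdgas
\[
C_{CE}^*(\mathfrak{q})\longrightarrow C_{CE}^*(\mathfrak{e})\longrightarrow C_{CE}^*(\mathfrak{h})
\]
retracted by $C_{CE}^*(\sigma)$, and identify $C_{CE}^*(\mathfrak{e})$ with the twisted complex $CE^*(\mathfrak{q},\mathfrak{h})$ of Example~\ref{ex:semidirectofLinfty}; alternatively, one replaces $\mathfrak{e}$ by a fibrant--cofibrant model in which $\pi_*$ is a strict fibration admitting a strict section, reducing to the classical computation $\mathfrak{e}\cong\mathfrak{h}\rtimes\mathfrak{q}$ for split extensions of dg Lie algebras and transporting the result back along the equivalence.
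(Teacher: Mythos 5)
Your proposal follows essentially the same route as the paper's own proof: apply the $Lie$ functor to the section to get $\sigma=Lie(s)$, compose with the adjoint action and restrict to the fiber to obtain the $L_\infty$-action defining the semi-direct product, then compare the two split fiber sequences via the map $(x,y)\mapsto x+\sigma(y)$ and conclude by two-out-of-three. Your closing paragraph on rigidifying the homotopy-coherent restriction of $\mathrm{ad}\circ\sigma$ to the homotopy fiber is a welcome addition, as the paper's proof passes over this point by treating the fiber as a strict kernel.
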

\begin{proof}
 The Lie algebra functor (\ref{D:TgtofGpPreStack}) depends only on the associated formal group at the base point. Therefore it gives a $L_\infty$-morphim $Lie(G)\stackrel{Lie(s)}\longrightarrow Lie\big(\Omega_*(X//G)\big)$. Composing with the adjoint action of the latter,
 we obtain a morphism $ad\circ Lie(s):Lie(G)\to Der\Big(Lie\big(\Omega_*(X//G)\big)\Big)$. 
 Since this is a map of Lie algebras, and $s$ is a section of $\pi$,
 the induced action of $Lie(G)$ on $Lie\big(\Omega_*(X//G)\big)$ restricts to $ker(\pi)\cong Lie(\Omega_* X)$. Therefore 
 we get an induced $L_\infty$-algebra map $Lie(G)\to Der\big(Lie(\Omega_* X)\big)$
 which defines the homotopy Lie algebra semi-direct product (\ref{ex:semidirectofLinfty}).
It follows that the morphism \[Lie(\Omega_* X) \rtimes  Lie(G) \ni (x,y)\,\stackrel{\tau}\mapsto \, x+s(y) \in Lie\big(\Omega_*(X//G)\big) \] 
is a $L_\infty$-algebra map
and that we have a commutative diagram of fiber sequences
\[\xymatrix{Lie(\Omega_* X)  \ar[r] \ar@{=}[d] &  Lie(\Omega_* X) \rtimes  Lie(G) \ar[r]^{\quad\qquad Lie(\pi)} \ar[d]^{\tau}& Lie(G) \ar@{=}[d]\\
Lie(\Omega_* X)  \ar[r]  &  Lie\big(\Omega_*(X//G)\big) \ar[r]^{\qquad Lie(\pi)} & Lie(G) } \]
 of  $L_\infty$-algebras. The equivalence now follows from the 2 out 3 property.
\end{proof}

\subsection{The Lie algebra of homotopy automorphisms from a derived group action}

The goal of this section is to prove (and make sense of) the formula below:
\[
Lie(\underline{haut}_{P_{\infty}-Alg}(X,\varphi)) \simeq Lie(\Omega_{[\varphi]}(\underline{P_{\infty}\{X\}}//\underline{haut}(X))).
\]
To do so, we interpret $Lie(\underline{haut}_{P_{\infty}-Alg}(X,\varphi))$ as the tangent Lie algebra of a homotopy quotient 
of $\underline{P_{\infty}\{X\}}$ by the $\infty$-action of $\underline{haut}(X)$. 

An explicit model of the homotopy quotient is given by a homotopy version of the well known Borel construction,
suitably adapted for simplicial presheaves over the opposite category of cdgas.
In \cite{Jar}, the Borel construction is given by the classifying space of the translation groupoid associated to the action of a sheaf of groups $G$
on a sheaf $X$, that is $EG\times_GX$. We adapt this construction to the case of an $\infty$-action.

Let $P_{\infty}$ a cofibrant prop. Let $\mathcal{N} wCh_{\mathbb{K}}^{\Delta[-]\otimes P_{\infty}}$ be
the bisimplicial set defined by $(\mathcal{N} wCh_{\mathbb{K}}^{\Delta[-]\otimes P_{\infty}})_{m,n}=(\mathcal{N} wCh_{\mathbb{K}}^{\Delta[n]\otimes P_{\infty}})_{m}$,
where the $w$ denotes the subcategory of morphisms which are quasi-isomorphisms in $Ch_{\mathbb{K}}$ and $\Delta[-]\otimes P_{\infty}$ is a cosimplicial resolution of $P_{\infty}$.
We recall from \cite{Yal3} the diagram
\[
\xymatrix{P_{\infty}\{X\}\ar[r]\ar[d] & diag\mathcal{N} fwCh_{\mathbb{K}}^{\Delta[-]\otimes P_{\infty}}\ar[d]\ar[r]^{\sim} & diag\mathcal{N} wCh_{\mathbb{K}}^{\Delta[-]\otimes P_{\infty}} & \mathcal{N} wCh_{\mathbb{K}}^{P_{\infty}}\ar[l]_-{\sim}\ar[d]\\
pt\ar[r] & \mathcal{N}(fwCh_{\mathbb{K}})\ar[rr]^{\sim} &  & \mathcal{N}(wCh_{\mathbb{K}})
},
\]
where the $fw$ denotes the subcategory of morphisms which are acyclic fibrations in $Ch_{\mathbb{K}}$, and where $diag(-)$ denotes the diagonal simplicial set of a bisimplicial set.
The crucial point here is that the left-hand commutative square of this diagram is a homotopy pullback, implying that
we have a homotopy pullback of simplicial sets (see \cite[Theorem 0.1]{Yal3})
\[
\xymatrix{P_{\infty}\{X\}\ar[d]\ar[r] & \mathcal{N}(wCh_{\mathbb{K}}^{P_{\infty}})\ar[d]\\
\{X\}\ar[r] & \mathcal{N}wCh_{\mathbb{K}}.
}
\]
Therefore $P_{\infty}\{X\}$ can be identified with the homotopy fiber
\[
\xymatrix{P_{\infty}\{X\}\ar[r]\ar[d] & diag\mathcal{N} fwCh_{\mathbb{K}}^{P\otimes\Delta^{\bullet}}\ar[d]\\
\{X\}\ar[r] & \mathcal{N}fwCh_{\mathbb{K}} \sim  \mathcal{N}wCh_{\mathbb{K}.}
}.
\]
As pointed out in the proof of \ref{P: Rezkgen}, these constructions hold true for more general symmetric monoidal model categories tensored over chain complexes, such as dg modules over a cdga (our case of interest).

The main goal of this section is to prove the following result which identifies the homotopy quotient of $\underline{P_{\infty}\{X\}}$ by the derived prestack group of automophisms of the underlying cochain complex:
\begin{thm}\label{P:BorelvsRezknerve}
Let $X$ be a cochain complex and $\phi:P_{\infty}\rightarrow End_X$ be a prop morphism.
There exists a commutative square
\[
\xymatrix{
E\underline{haut}_{\mathbb{K}}(X)\times_{\underline{haut}_{\mathbb{K}}(X)} \underline{P_{\infty}\{X\}} \ar[r]^-{\sim} \ar@{->>}[d]_{\pi}
& diag\underline{\mathcal{N} fwCh_{\mathbb{K}}}^{P\otimes\Delta^{\bullet}}\ar[d]\\
B\underline{haut}_{\mathbb{K}}(X) \ar[r]^-{\sim} & \underline{\mathcal{N}wCh_{\mathbb{K}}}|_X
}
\]
where $\pi$ is a Kan fibration obtained by simplicial Borel construction
and  the horizontal maps are weak equivalences of simplicial sets,
inducing an equivalence 
\[
\Omega_{[\varphi]}\left(E\underline{haut}_{\mathbb{K}}(X)\times_{\underline{haut}_{\mathbb{K}}(X)} \underline{P_{\infty}\{X\}}\right)\stackrel{\sim}{\rightarrow}
\Omega_{(X,\varphi)}\left(diag\underline{\mathcal{N} fwCh_{\mathbb{K}}}^{P\otimes\Delta^{\bullet}}\right)
\simeq \Omega_{(X,\varphi)}\underline{\mathcal{N}wCh_{\mathbb{K}}}^{P_{\infty}}
\]of derived prestack groups.
\end{thm}
For this aim, we need to define the action of $\underline{haut}_{\mathbb{K}}(X)$ on $\underline{P_{\infty}\{X\}}$.
First, let us recall some homotopical properties of props under base change of cdgas:
\begin{lem}\label{L:basechange}
\begin{itemize}
\item[(1)] Any cdga $A$ induces a Quillen adjunction
\[
(-)\otimes A:Prop(Ch_{\mathbb{K}})\rightleftarrows Prop(Mod_A):U
\]
between the model category of $\mathbb{K}$-linear dg props and the model category of $A$-linear dg props, where $(-)\otimes A$ is the aritywise tensor product by $A$ and $U$ is the forgetful functor.

\item[(2)] Any morphism of cdgas $u:A\rightarrow B$ induces a Quillen adjunction
\[
u_*:=(-)\otimes_AB:Prop(Mod_A)\rightleftarrows Prop(Mod_B):u^*
\]
between the model category of $A$-linear dg props and the model category of $B$-linear dg props, where $u^*$ is the classical restriction functor sending any $B$-module to the same underlying complex with the $A$-module structure induced by $u$ and the tensor product defining $u_*$ is the aritywise tensor product of a prop by a cdga.

\item[(3)] For any complex $X$ and for any chain morphism $f:X\rightarrow Y$, the functor $u_*$ induces well defined prop morphisms
\[
End_{X\otimes A}^{Mod_A}\rightarrow End_{X\otimes B}^{Mod_B}
\]
\[
End_{f\otimes A}^{Mod_A}\rightarrow End_{f\otimes B}^{Mod_B}
\]
where $End_{(-)}^{Mod_A}$ and $End_{(-)}^{Mod_B}$ denotes respectively the endomorphism prop in the category of dg $A$-modules and in the category of dg $B$-modules.

\item[(4)] Given a cofibrant dg prop in $\mathbb{K}$-modules $P_{\infty}$ and a complex $X$, the functor $u_*$ induces also a well defined simplicial map of \emph{Kan complexes}
\[
P_{\infty}\otimes A\{X\otimes A\}\rightarrow P_{\infty}\otimes B\{X\otimes B\} 
\]
where the left hand side and right hand side mapping spaces are taken respectively in the model category of $A$-linear dg props and in the model category of $B$-linear dg props.
\end{itemize}
\end{lem}
\begin{proof}
Recall that $Mod_A$ is a cofibrantly generated symmetric monoidal model category whose fibrations and weak equivalences are those induced by the forgetful functor to complexes (so degreewise surjections and quasi-isomorphisms) and tensor product is defined for any pair of $A$-modules $M$ and $N$ by the coequalizer $$A\otimes M\otimes N\rightrightarrows M\otimes N\rightarrow M\otimes_AN$$ whose pair of arrows are defined by the $A$-module structures of $M$ and $N$.

Claim (1) is \cite[Lemma 3.4]{Yal2}.

Let $u:A\rightarrow B$ be a morphism of cdgas. It induces an adjunction
\[
u_*:=(-)\otimes_AB:Mod_A\rightleftarrows Mod_B:u^*
\]
in which the right adjoint $u^*$ preserves obviously fibrations and weak equivalences, so that it forms actually a Quillen adjunction.
The left adjoint is a strong symmetric monoidal functor via the natural isomorphisms
\[
M\otimes_AN\otimes_AB\cong (M\otimes_AB)\otimes_B(N\otimes_AB)
\]
for $M,N\in Mod_B$
and the right adjoint is a lax monoidal functor via the natural maps $u^*(M)\otimes_A u^*(N)\rightarrow  u^*(M\otimes_BN)$. An adjunction between a strong symmetric monoidal left adjoint and a lax monoidal right adjoint suffices to lift this adjunction at the level of props
\[
u_*:=(-)\otimes_AB:Prop(Mod_A)\rightleftarrows Prop(Mod_B):u^*.
\]
Fibrations and weak equivalences of dg props are defined by the forgetful functor from props to $\mathbb{N}\times\mathbb{N}$-indexed collections of complexes, so these are respectively aritywise surjections and aritywise quasi-isomorphisms. The right adjoint preserves such morphisms, so the adjunction above forms actually a Quillen adjunction  finishing claim (2).

The first morphism of claim (3) follows directly from the fact that $u_*$ is strong symmetric monoidal. The second one is induced by the first one, considering that the endomorphism prop of a morphism $f:X\rightarrow Y$ is given by the pullback
\[
\xymatrix{
End_{f\otimes A}^{Mod_A}\ar[r]\ar[d] & End_{Y\otimes A}^{Mod_A}\ar[d]^-{(f\otimes A)^*} \\
End_{X\otimes A}^{Mod_A}\ar[r]_-{(f\otimes A)_*} & Hom_{XY}^{Mod_A}
}
\]
where $Hom_{XY}^{Mod_A}$ is the sequence of complexes $\{Hom_{Mod_A}((X\otimes A)^{\otimes_Am},(Y\otimes A)^{\otimes_An})\}_{m,n\in\mathbb{N}}$ and the morphisms $(f\otimes A)^*$ and $(f\otimes A)_*$ are defined in each arity $(m,n)$ respectively by postcomposing with $(f\otimes A)^{\otimes n}$ and precomposing with $(f\otimes A)^{\otimes m}$. The morphism
\[
End_{f\otimes A}\rightarrow End_{f\otimes B}
\]
then follows by applying the pullback functor to the morphism of diagrams
\[
\xymatrix{
End_{X\otimes A}^{Mod_A}\ar[d] & Hom_{XY}^{Mod_A}\ar[l]\ar[r]\ar[d] & End_{Y\otimes A}^{Mod_A}\ar[d] \\
End_{X\otimes B}^{Mod_B} & Hom_{XY}^{Mod_B}\ar[l]\ar[r] & End_{Y\otimes B}^{Mod_B}
}
\]
induced by $u_*$.

To prove (4), we first need to observe that $u_*$ is compatible with the simplicial structure of the mapping spaces, thus defines a simplicial functor between the corresponding simplicially enriched categories. For this, we recall from the general definition of simplicial mapping spaces in model categories combined with \cite[Proposition 2.5]{Yal2} that in the model category of $A$-linear dg props, the simplicial mapping space between two props $P$ and $Q$ is given by
\[
Map_{Prop(Mod_A)}(P,Q) = Mor_{Prop(Mod_A)}(P,Q\otimes_A(A\otimes \Omega_{\bullet}))
\]
with a simplicial structure induced by the simplicial cdga $A\otimes \Omega_{\bullet}$.
We apply this to the particular case $P=P_{\infty}\otimes A$ and $Q=End_{X\otimes A}^{Mod_A}$. The functor $u^*$ induces, for every $n$, an application
\begin{eqnarray*}
 & & Mor_{Prop(Mod_A)}(P_{\infty}\otimes A,End_{X\otimes A}^{Mod_A}\otimes_A(A\otimes \Omega_{n})) \\
 & \rightarrow & Mor_{Prop(Mod_B)}((P_{\infty}\otimes A)\otimes_AB,(End_{X\otimes A}^{Mod_A}\otimes_A(A\otimes \Omega_{n}))\otimes_AB) \\
 & = & Mor_{Prop(Mod_B)}(P_{\infty}\otimes B,(End_{X\otimes A}^{Mod_A}\otimes_AB)\otimes_B(A\otimes \Omega_{n}))\otimes_AB) \\
 & = & Mor_{Prop(Mod_B)}(P_{\infty}\otimes B,(End_{X\otimes B}^{Mod_B}\otimes_B(\Omega_{n}\otimes B))
\end{eqnarray*}
which is compatible with the simplicial structures of $A\otimes \Omega_{\bullet}$ and $B\otimes \Omega_{\bullet}$, hence defines a simplicial map
\[
P_{\infty}\otimes A\{X\otimes A\}\rightarrow P_{\infty}\otimes B\{X\otimes B\}.
\]
To conclude, for these mapping spaces to be Kan complexes it is sufficient to prove that the source is cofibrant and the target is fibrant. The source of each mapping space is cofibrant thanks to (1) (since $P_{\infty}$ is cofibrant in $\mathbb{K}$-linear dg props). Dg $A$-modules are all fibrant because fibrations of $A$-modules are defined by the forgetful functor and every complex over a field is fibrant. Moreover, fibrant $A$-linear dg props are the aritywise fibrant ones, so every $A$-linear dg prop is fibrant. This concludes the proof.
\end{proof}
These compatibilities allow us to get the functoriality, up to homotopy, of the $\infty$-action we are going to consider below.

First, let us point out that we have a natural equivalence $haut(X)\simeq fhaut(X)$, where $fhaut(X)$ is the simplicial submonoid of $haut(X)$
whose vertices are the self acyclic fibrations $X\stackrel{\sim}{\rightarrow}X$. 
This equivalence is simply given by the functorial factorization properties of the underlying model category (which replace functorially any weak equivalence by a weakly equivalent acyclic fibration),
implying that every self-weak equivalence of $X$ is in the connected component of a self acyclic fibration of $X$ (recall that every complex over a field is both fibrant and cofibrant, so $X$ is already a fibrant-cofibrant object).

Second, we can transfer $P_{\infty}$-algebra structures along acyclic fibrations, hence a map
\begin{eqnarray*}
haut(X)\times P_{\infty}\{X\} & \rightarrow & P_{\infty}\{X\} \\
(f,\psi) & \longmapsto & Rf_*\psi
\end{eqnarray*}
as follows. We associate to $f$ its equivalent acyclic fibration $Rf$.
Let $End_{Rf}$ be the dg prop associated to the morphism $Rf$. It is defined by the coreflexive equalizer 
\[End_{Rf}(n,m)= Eq\Big(Hom(X^{\otimes n}, X^{\otimes m}) \times Hom(Y^{\otimes n}, Y^{\otimes m})\rightrightarrows 
Hom(X^{\otimes n}, Y^{\otimes m})\Big) \] where the maps are given by either postcomposition or precomposition by $Rf$. Note that 
$End_{Rf}$ has  a natural prop structure and two canonical prop maps $Rf_*, Rf^*$ to $End_X$ and $End_Y$.
We get a lifting
\begin{equation}\label{eq:liftofPsi}
\xymatrix{
0\ar[r]\ar[d] & End_{Rf}\ar[d]^-{Rf^*} \ar[r]^{Rf_*}& End_X\\
P_{\infty}\ar[r]_{\psi} \ar@{-->}[ur]& End_X &
}
\end{equation}
since, by \cite[Lemma 7.2]{Frep},  the right vertical morphism is an acyclic fibration in the model category of dg properads, and $P_{\infty}$ is cofibrant. 

With a slight abuse of notation  \emph{we denote by} $Rf_*\psi$ \emph{the composite}
\begin{equation}
\label{eq:defRfPsi} P_\infty \longrightarrow   End_{Rf} \stackrel{Rf_*}\longrightarrow End_X.
\end{equation}

Moreover, given two homotopy automorphisms $f$ and $g$, the lifts obtained by $R(g\circ f)_*\psi=(Rg\circ Rf)_*\psi$ and $Rg_*(Rf_*\psi)$ are homotopy equivalent by contractibility of the space of lifts in the commutative square above, giving the compatibility up to homotopy with the composition in $haut(X)$.
\begin{lem}\label{L:inftyactionprop}
The map~\eqref{eq:defRfPsi} induces an $\infty$-action of the derived prestack group $\underline{haut}(X)$ on $\underline{P_{\infty}\{X\}}$.
\end{lem}
\begin{proof}
We have to explain why such lifts gives an action at the simplicial level, then why this action is functorial in order to induce the desired action at the level of simplicial presheaves. For this, first recall that we use the fibrant replacement functor $f\mapsto Rf$ to only consider acyclic fibration.  We use the base change properties of Lemma \ref{L:basechange}. Indeed, the lift $P_\infty \longrightarrow   End_{Rf}$ induces for any Artinian cdga $A$ a lift $P_\infty \otimes A\longrightarrow   End_{Rf}^{Mod_A}$ such that for any morphism of cdgas $u:A\to B$, we have a commutative diagram 
\begin{equation}\label{eq:comlift} \xymatrix{ P_\infty \otimes A\ar[r] \ar[d]_{u_*} & End_{Rf}^{Mod_A} \ar[d]^{u_*}\\ P_\infty \otimes B \ar[r] & End_{Rf}^{Mod_B}  }\end{equation}
Note that for any cdga $A$, the category $Mod_A$  is a cofibrantly generated symmetric monoidal model category satisfying the limit monoid axioms \cite[Section 6.6]{Frep},  so that \cite[Lemma 7.2]{Frep} still applies to $A$-linear $P_{\infty}$-structures in $Mod_A^{cof}$. Further,  
 the extension $Rf: X\otimes A\to X\otimes A$ is an acyclic fibration between cofibrant-fibrant objects of $Mod_{A}$. And by the assertion (1) of \ref{L:basechange}, the endomorphism prop maps $End_{Rf}^{Mod_A}\stackrel{Rf^*}\longrightarrow End_{X\otimes A}^{Mod_A} $ are   still a cofibrations (all modules are already fibrant). 

Now, for any $n\in \mathbb{N}$, we work in the model category $Prop(Mod_{\Omega_n})$ and apply this Lemma to the particular case $A=\Omega_n$. Recall that:
\begin{itemize}
\item[-] the $n$-simplices of $fhaut(X)$ are determined by self acyclic fibrations of $X\otimes\Omega_n$ in $\Omega_n$-modules;

\item[-] the set of $n$-simplices of $P_{\infty}\{X\}$ satisfies the natural isomorphism
\[
P_{\infty}\{X\}_n=Mor_{Prop}(P_{\infty},End_X\otimes\Omega_n)\cong Mor_{Prop(Mod_{\Omega_n})}(P_{\infty}\otimes\Omega_n,End_{X\otimes\Omega_n}^{Mod_{\Omega_n})})
\]
identifying $n$-simplices with $\Omega_n$-linear $P_{\infty}$-structures on $X\otimes\Omega_n$,
\end{itemize}
so building our action at the level of $n$-simplices amounts to make the set of self acyclic fibrations of $X\otimes\Omega_n$ (as an $\Omega_n$-module) act on $Mor_{Prop(Mod_{\Omega_n})}(P_{\infty}\otimes\Omega_n,End_{X\otimes\Omega_n}^{Mod_{\Omega_n}})$ by the same formula as in the case $n=0$ above.

We are now interested by the compatibility of our lifts with base changes along morphisms of cdgas. The construction above, written in the particular case $A=\Omega_n$, works exactly the same for any cdga $A$. Let $A\rightarrow B$ be a any morphism of cdgas. Let $f_A:X\otimes A\rightarrow X\otimes A$ an acyclic fibration in the model category of $A$-modules. Under the base change functor $(-)\otimes_AB$ and the fibrant replacement functor, this gives a self acyclic fibration $R(f_A\otimes_AB)$ of $X\otimes B$ in the model category of $B$-modules and the diagram~\eqref{eq:comlift} above ensures the compatibility of those lifts with  the base change functor (lifted to the model categories of props). 
Further, by contractibility of the space  of lifts in the diagram~\eqref{eq:liftofPsi}, we get equivalences  $R(g\circ f)_*\psi=(Rg\circ Rf)_*\psi$ and $Rg_*(Rf_*\psi)$.
The compatibility of theses lifts 
with base changes along morphisms of cdgas explains both :
\begin{itemize}
\item[-] faces and degeneracies of the simplicial structures under consideration are given by base changes along morphisms of cdgas defined by the cosimplicial structure of $\Omega_{\bullet}$ ;

\item[-] the naturality of the action with respect to morphisms of cdgas is its compatibility with the corresponding base change functors.
\end{itemize}
Thus, we get an $\infty$-action of $\underline{haut}(X)$ on $\underline{P_{\infty}\{X\}}$ in simplicial presheaves over artinian cdgas.
\end{proof}
With this $\infty$-action we start the proof of Theorem~\ref{P:BorelvsRezknerve}.
\begin{proof}[Proof of Theorem~\ref{P:BorelvsRezknerve}]
First we construct the commutative diagram
\[
\xymatrix{
Ehaut_{\mathbb{K}}(X)\times_{haut_{\mathbb{K}}(X)} P_{\infty}\{X\} \ar[r] \ar@{->>}[d]_{\pi}
& diag\mathcal{N} fwCh_{\mathbb{K}}^{P\otimes\Delta^{\bullet}}|_X\ar[d]\\
Bhaut_{\mathbb{K}}(X) \ar[r] & \mathcal{N}wCh_{\mathbb{K}}|_X
}
\]
in the following way:
\[
\xymatrix{
((f_k,...,f_0),\varphi:P_{\infty}\otimes\Delta^k\rightarrow End_X) \ar[r] \ar[d]_{\pi=p^*_{haut_{\mathbb{K}}(X)}} &
((X\phi)\stackrel{\sim}{\rightarrow}(X,f_k.\varphi)...\stackrel{\sim}{\rightarrow}(X,(f_k\circ...\circ f_1).\varphi)) \ar[d]^{forget} \\
(f_{k-1},...,f_0) \ar[r] & (X\stackrel{f_{k-1}}{\rightarrow}...\stackrel{f_0}{\rightarrow}X)
}
\]
where the left vertical map is the projection associated to the Borel construction
and the right vertical map forgets the $P_{\infty}\otimes\Delta^k$-algebra structure.
The top horizontal map transfers the $P_{\infty}\otimes\Delta^k$-algebra structure on $X$
along the sequence of quasi-isomorphisms given by $f_k,...,f_0$ and the bottom horizontal map
is just an inclusion.
It is clear by definition of faces and degeneracies in the simplicial structures involved
that these four maps are simplicial.

It remains to prove that the two horizontal maps are weak equivalences. For the bottom arrow, it follows from the work of Dwyer-Kan \cite{DK3} which identifies the connected components of the classification space of a model category with the classifying complexes of homotopy automorphisms $Bhaut(X)$.

For the top arrow, we have a morphism of homotopy fibers over $X$
\[
\xymatrix{
P_{\infty}\{X\} \ar[r]\ar[d]^{=} & Ehaut_{\mathbb{K}}(X)\times_{haut_{\mathbb{K}}(X)} P_{\infty}\{X\} \ar[d] \ar[r] &
Bhaut_{\mathbb{K}}(X)\ar[d]^{\sim} \\
P_{\infty}\{X\} \ar[r] & diag\mathcal{N} fwCh_{\mathbb{K}}^{P\otimes\Delta^{\bullet}} \ar[r] &
\mathcal{N}wCh_{\mathbb{K}}|_X \\
}
\]
inducing another morphism of homotopy fibers
\[
\xymatrix{
\Omega_{[\varphi]}\left(Ehaut_{\mathbb{K}}(X)\times_{haut_{\mathbb{K}}(X)}P_{\infty}\{X\}\right) \ar[r]\ar[d] & haut_{\mathbb{K}}(X) \ar[d]^{\sim}\ar[r] & P_{\infty}\{X\} \ar[d]^{=} \\
\Omega_{(X,\varphi)}\left(diag\mathcal{N} fwCh_{\mathbb{K}}^{P\otimes\Delta^{\bullet}}\right) \ar[r] & \Omega_X\mathcal{N}wCh_{\mathbb{K}}|_X \ar[r] &
P_{\infty}\{X\} \\
}
\]
taken over the base point $\varphi$.
\end{proof}

\begin{prop}\label{P:hautcplx}
For any chain complex $X$, the tangent Lie algebra $Lie(\underline{haut}(X))$ of $\underline{haut}(X)$ is equivalent to 
$End(X)=Hom_{Ch_{\mathbb{K}}}(X,X)$ 
equipped with the commutator of the composition product as Lie bracket.
\end{prop}
\begin{proof}
This follows from Lurie-Pridham correspondence applied to the formal moduli problem $B\underline{haut}(X)$. 
Recall (Definition~\ref{D:TgtofGpPreStack}) that $Lie(\underline{haut}(X)) = Lie(\widehat{\underline{haut}(X)}_{id})$
Further, $\widehat{\underline{haut}(X)}_{id}$  is the based loop space (i.e. the automorphsims) of $ObjDefo_{X}:dgArt_{\mathbb{K}}^{aug}\rightarrow sSet$, 
the deformation object functor of $X$ 
 of~\cite[Section 5.2]{Lur0}.  The latter is a $1$-proximate formal moduli problem in the sense of~\cite[Section 5.1]{Lur0} with associated 
 formal moduli problem denoted $L(ObjDefo_{X})$.
 By~\cite[Lemma 2.11]{BKP}, there is an equivalence of formal moduli problems  
 \begin{equation}\label{eq:LieofHaut} \widehat{\underline{haut}(X)}_{id} \, \cong \, \Omega \big(ObjDefo_{X} \, 
 \big)\stackrel{\cong}\longrightarrow\, \Omega \big(L(ObjDefo_{X}) \big)
 .\end{equation}
Using this equivalence~\eqref{eq:LieofHaut} with Proposition~\ref{P:LieofGroup} then shows that 
\[Lie(\underline{haut}(X)) \, \cong\, \mathfrak{L}_{L(ObjDefo_{X}) }. \]
By \cite[Theorem 5.2.8, Theorem 3.3.1]{Lur0}, the  Lie algebra associated to $L(ObjDefo_{X})$ is precisely 
$Hom_{Ch_{\mathbb{K}}}(X,X)$ with its dg Lie algebra structure.
\end{proof}

We deduce:
\begin{prop}\label{P:BorelvsRezktangent}
There is an equivalence of homotopy fiber sequences of Lie algebras
\[
\xymatrix{
g_{P,X}^{\varphi}\ar[r]\ar[d]^{\sim} &Lie(\underline{haut}_{P_{\infty}-Alg}(X,\varphi)) \ar[r]\ar[d]^{\sim} & End(X)\ar[d]^{\sim} \\
g_{P,X}^{\varphi}\ar[r] & Lie(\Omega_{[\varphi]}(\underline{P_{\infty}\{X\}}//\underline{haut}(X)))\ar[r] & End(X)
}
\]
\end{prop}
\begin{proof}
By Theorem~\ref{P:BorelvsRezknerve}, we have a morphism of homotopy fiber sequences
\[
\xymatrix{
P_{\infty}\{X\} \ar[r]\ar[d]^{=} & Ehaut_{\mathbb{K}}(X)\times_{haut_{\mathbb{K}}(X)} P_{\infty}\{X\} \ar[d] \ar[r] &
Bhaut_{\mathbb{K}}(X)\ar[d]^{\sim} \\
P_{\infty}\{X\} \ar[r] & diag\mathcal{N} fwCh_{\mathbb{K}}^{P\otimes\Delta^{\bullet}} \ar[r] &
\mathcal{N}wCh_{\mathbb{K}}|_X \\
}.
\]
Applying the based loop functor we get an equivalence of homotopy fiber sequences of derived groups, since we also have, 
by Theorem~\ref{P:BorelvsRezknerve},  that the map
\begin{multline*}
\Omega_{[\varphi]}\left(Ehaut_{\mathbb{K}}(X)\times_{haut_{\mathbb{K}}(X)} P_{\infty}\{X\}\right)\stackrel{\sim}{\rightarrow}
\Omega_{(X,\varphi)}\left(diag\mathcal{N} fwCh_{\mathbb{K}}^{P\otimes\Delta^{\bullet}}\right) \\
\simeq \Omega_{(X,\varphi)}\mathcal{N}wCh_{\mathbb{K}}^{P_{\infty}}
\end{multline*}
is an equivalence.  
Taking the formal completions at the appropriate base points and applying the $Lie$-algebra $\infty$-functor combined with Proposition~\ref{P:hautcplx}, 
we obtain the desired equivalence of homotopy fiber sequences of Lie algebras.
\end{proof}

\section{An explicit model via the operad of differentials}\label{S:Plus}
 We now  provide an explicit formula to express the Lie algebra structure of the homotopy automorphisms of a $P_\infty$-algebra, which is crucial to consider deformation complexes of algebraic structures
which also encode compatible deformations of the differential. For this, we use a construction originally due to Merkulov \cite{Mer2},
which gives a conceptual explanation of how one can express the deformation theory inside $P_{\infty}-Alg$ as deformations of a $P_{\infty}$-algebra structure 
in the properadic sense plus compatible deformations of the differential, and formalizes properly Remark~\ref{R:homotopyaction}.

Precisely, 
Theorem~\ref{T:twistedsemidirprod} expresses the Lie algebra structure of the homotopy automorphisms of a $P_\infty$-algebra as a twisted semi-direct product involving the standard operadic deformation complex
of $P_\infty$-algebras. In the next two subsections we actually express the semi-direct product 
explicitly as an $L_{\infty}$-algebra $g_{P^+,X}^{\varphi^+}$, 
obtained as a Maurer-Cartan twisting of a convolution $L_{\infty}$-algebra involving a ``plus construction'' for properads. The use of the $+$ construction to deform usual deformation complex of morphism of properads also appear as a crucial part in Merkulov-Willwacher study of quantization functors~\cite{MW}.

From now, instead of props we work again with properads.

\subsection{The operad of differentials}

We start by recalling the following definition of Merkulov~\cite{Mer2}.
\begin{defn}\label{D:plusconstruction}
Let $P$ be any dg properad  with presentation $P=\mathcal{F}(E)/(R)$ and differential $\delta$.
We define $P^+$ to be the dg-properad with presentation $\mathcal{F}(E^+)/(R)$ and differential  $\delta^+$  where 
the $\Sigma$-biobject $E^+$ is defined by
\[
E^+(1,1)=E(1,1)\oplus\mathbb{K}[1] \mbox{ and } E^+(m,n)=E(m,n).
\]
In other word we add to $E$ a generating operation $u$ of degree $-1$, with one input and one output. The differential $\delta^+$ is the following modification of $\delta$:
\[
\delta^+(u)=u\otimes u\in E(1,1)\otimes E(1,1)
\]
and
\[
\delta^+(g_{m,n}) = \delta (g_{m,n}) -\sum_{i=0}^{m-1}(-1)^kg_{m,n}\circ_{i+1} u + \sum_{i=0}^{n-1}u\circ_{i+1}g_{m,n}
\]
where $g_{m,n}\circ_{i+1} u$ is the insertion of $u$ in the $i+1$ input of $g_{m,n}$, $u\circ_{i+1}g_{m,n}$ the insertion of $u$ in the $i+1$ output of $g_{m,n}$, and $g_{m,n}$ is a degree $k$ generator of $E(m,n)$ for $(m,n)\neq (1,1)$.
\end{defn}
The role of the generator $u$ is thus to twist the differential of a complex $X$ when we consider a $P^+$-algebra structure on $X$. 
The following is proved in~\cite{Mer2} (and also follows from the argument of~\ref{L:Di}).
\begin{lem}
 The construction $P\mapsto (P)^+$ is an  endofunctor $(-)^+:Prop\rightarrow Prop$ of 
 the category of dg-properads. 
 
 Furthermore, properad morphisms $\varphi^+:P^+\rightarrow End_{(X,d)}$ for a given complex $X$ with differential $d$ corresponds to properad morphisms $P\rightarrow End_{(X,d-\varphi^+(u))}$ for $X$ equipped with the twisted differential $d-\varphi^+(u)$.
\end{lem}
In particular, if $X$ is a graded vector space then $P^+$-algebra structures on $X$ equip $X$ simultaneously with a $P$-algebra structure and a compatible differential.

\smallskip

Let us reinterpret this construction by defining the following operad:
\begin{defn}\label{Def:Di}
The operad of differentials $Di$ is the quasi-free operad $Di=(\mathcal{F}(E),\partial)$, where $E(1)=\mathbb{K}\delta$ with $\delta$ a generator of degree $-1$, $E(n)=0$ for $n\neq 1$ and $\partial(\delta)=\delta\circ \delta$ is the operadic composition $\circ:Di(1)\otimes Di(1)\rightarrow Di(1)$.
\end{defn}
We will do an abuse of notation and still note $Di$ the properad freely generated by this operad.
\begin{lem}\label{L:Di}
Let $(V,d_V)$ be a complex.

(1) A $Di$-algebra structure $\phi:Di\rightarrow End_V$ on $V$ is a twisted complex $(V,d_V-\delta_V)$ where $\delta_V$ is the image of the operadic generator $\delta$ under $\phi$.

(2) A morphism of $Di$-algebras $f:(V,d_V-\delta_V)\rightarrow (W,d_W-\delta_W)$ is a chain morphism $f:(V,d_V)\rightarrow (W,d_W)$ which satifies moreover $f\circ (d_V-\delta_V) = (d_W-\delta_W)\circ f$ (it is a morphism of twisted complexes).
\end{lem}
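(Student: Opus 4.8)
The plan is to exploit the fact that $Di$ is quasi-free on the single generator $\delta$ of arity $(1,1)$, so that a $Di$-algebra structure on $V$ reduces entirely to the image of this one generator, and then to match the single resulting differential equation with the condition that the twisted operator squares to zero. First I would fix the degree convention so that $\delta$, carrying the same degree as the internal differential $\partial$ of $Di$, is sent to an endomorphism $\delta_V := \phi(\delta) \in \mathrm{Hom}(V,V)$ of the same degree as $d_V$; this is exactly what makes $d_V - \delta_V$ a homogeneous operator of the degree of a differential. Since $Di = (\mathcal{F}(E),\partial)$ is free as a graded properad on $E = \mathbb{K}\delta$ (concentrated in arity $(1,1)$), a morphism of graded properads $\phi \colon \mathcal{F}(E) \to End_V$ is uniquely and freely determined by $\delta_V$ (every higher-arity component, such as a horizontal composite $\delta \boxtimes \cdots \boxtimes \delta$, is forced to be the corresponding tensor power of $\delta_V$). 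The only remaining constraint for $\phi$ to be a map of \emph{dg} properads is commutation with differentials, and since the differential of a quasi-free properad is the derivation determined by its values on generators, it suffices to impose this on $\delta$.

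For part (1), I would compute the two sides of $\phi \circ \partial = D_{End_V} \circ \phi$ on $\delta$. On the one hand $\phi(\partial \delta) = \phi(\delta \circ \delta) = \delta_V \circ \delta_V$; on the other hand, writing $D_{End_V}$ for the internal commutator differential of $End_V$, one has $D_{End_V}(\delta_V) = d_V \circ \delta_V + \delta_V \circ d_V$, the sign being fixed by $|\delta_V|$. Hence $\phi$ is a $Di$-algebra structure if and only if $\delta_V^2 = d_V \delta_V + \delta_V d_V$. Comparing with the expansion $(d_V - \delta_V)^2 = d_V^2 - (d_V \delta_V + \delta_V d_V) + \delta_V^2 = \delta_V^2 - (d_V\delta_V + \delta_V d_V)$, this equation is precisely the vanishing $(d_V - \delta_V)^2 = 0$, so $Di$-algebra structures on $V$ are exactly the twisted differentials $d_V - \delta_V$. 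This establishes (1).

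For part (2), I would unwind the definition of a morphism of $Di$-algebras $f \colon (V,\phi_V) \to (W,\phi_W)$: by definition it is a morphism $f \colon (V,d_V) \to (W,d_W)$ of the underlying complexes (so $f d_V = d_W f$) that intertwines the single operation, i.e. $f \circ \delta_V = \delta_W \circ f$. Subtracting these two identities yields $f \circ (d_V - \delta_V) = (d_W - \delta_W) \circ f$, so $f$ is a morphism of twisted complexes; conversely the two displayed conditions together recover equivariance under $\delta$, so the characterization is exact.

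The computations are routine, so I do not expect a genuine obstacle; the only points requiring care are the sign bookkeeping in the graded commutator $D_{End_V}(\delta_V)$ and the justification that dg-compatibility of $\phi$ need only be checked on the generator $\delta$ (which rests on $\partial$ being a derivation of the free properad $\mathcal{F}(E)$). Pinning down the degree convention at the outset—so that $\delta_V$ lands in the same degree as $d_V$ despite $\delta$ being recorded in degree $-1$—is the one bit of setup I would state carefully before running the comparison.
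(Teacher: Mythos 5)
Your proof is correct and follows essentially the same route as the paper's: freeness of $Di$ on the single arity-$(1,1)$ generator reduces everything to $\delta_V=\phi(\delta)$, compatibility with differentials on that generator yields the twisting-cochain equation $\delta_V^2=d_V\delta_V+\delta_V d_V$, which is equivalent to $(d_V-\delta_V)^2=0$, and morphisms are handled by the same commuting-square unwinding. Your extra care about the degree convention for $\delta$ is a reasonable addition but does not change the argument.
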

\begin{proof}
(1) The morphism $\phi$ is entirely determined by the image of the generator $\delta$. Since
\[
Di(1)\rightarrow Hom(V,V)
\]
is a morphism of complexes, its compatibility with the differentials reads
\[
\phi(\partial(\delta)) = d_V\circ\delta_V  + \delta_V\circ d_V
\]
which gives the equation of twisting cochains
\[
\delta_V^2 = d_V\circ \delta_V + \delta_V\circ d_V,
\]
hence
\[
(d_V-\delta_V)^2 = d_V^2+\delta_V^2 - d_V\circ\delta_V - \delta_V\circ d_V = 0.
\]

(2) A $Di$-algebra structure on $V$ is given by a morphism $Di(V)\rightarrow V$, and a $Di$-algebra morphism $f:V\rightarrow W$
is a chain morphism fitting in the commutative square
\[
\xymatrix{
Di(V)\ar[r]^-{Di(f)}\ar[d] & Di(W)\ar[d] \\
V\ar[r]_-f & W
}.
\]
Since a $Di$-algebra structure is determined by the image of the generator $\delta$ via $Di(1)\otimes V\rightarrow V$, this amounts to the commutativity of the square
\[
\xymatrix{
Di(1)\otimes V\ar[r]^-{Di(1)\otimes f}\ar[d] & Di(1)\otimes W\ar[d] \\
V\ar[r]_-f & W
},
\]
which is exactly saying that $f$ is a morphism of twisted complexes.
\end{proof}
\begin{rem}\label{R:Diiscofibrant}Let us note that $Di$ is a non-positively graded quasi-free operad, 
but not a \emph{cofibrant} operad. Indeed, to be a retract of a relative cell complex in the model category of dg operads, it needs a filtration $Di$ lacks of (and the same holds true for the corresponding dg properad).
By \cite[Corollary 40]{MV2}, quasi-free properads $(\mathcal{F}(E), \partial)$ endowed with a Sullivan filtration are cofibrant, and any properad admits a resolution by such. A Sullivan filtration (inspired by the Sullivan filtrations of rational homotopy theory) is an exhaustive increasing filtration $(E_i)_{i\geq 0}$ such that $E_0=\{0\}$, the maps $E_i\to E_{i+1}$ are split dg-monomorphisms of $\Sigma$-modules and $\partial (E_i) \subset \mathcal{F}(E_{i-1})$. In the case of the properad generated by the operad $Di$, the $\Sigma$-module of generators is reduced to $E(1,1)=\mathbb{K}\delta$, so there is no other possibility of filtration than the trivial one given by $E_0=\{0\}$ and $E_1=E$, which is not a Sullivan filtration since $\partial(E)\neq \{0\}$. Neither the operad nor the properad $Di$ are cofibrant, they are only $\Sigma$-cofibrant.
\end{rem}
 
\begin{lem}\label{L:Dicontractible}
The (pr)operad $Di$ is contractible in the sense that the initial morphism $I\rightarrow Di$ and the projection $Di\rightarrow I$ are quasi-isomorphisms.
\end{lem}
\begin{proof}
To define our contracting homotopy, let us analyze a bit more the structure of $Di$. We have $Di(n)=0$ for $\neq 1$ and
\[
Di(1) = \mathbb{K}\oplus \bigoplus_{n\geq 1}\mathbb{K}\delta^{\circ n}
\]
where $\circ$ is the operadic composition and $\delta$ is of degree $-1$ (so $\delta^n$ is of degree $-n$). The differential $\partial$ is the extension to $\mathcal{F}(E)$ of the map $E\rightarrow \mathcal{F}(E),\delta\mapsto \delta\circ\delta$. For sign reasons, we have $\partial (\delta\circ\delta) = 0$. By recursion, we deduce that for any natural integer $n$, we have
$\partial (\delta^{\circ 2n})=0$ and $\partial (\delta^{\circ 2n+1})=\delta^{\circ 2n+2}$. Elements of odd degrees are not cycles and elements of even degrees are boundaries except for the identity operation in degree zero, which defines the only non trivial class in homology, so the homology of $Di$ reduces to $I$.

An equivalent way to state this is that there is a chain homotopy between $Id_{Di(1,1)}$ and the composite $Di(1,1)\rightarrow I(1,1)\rightarrow Di(1,1)$ (the projection determined by sending $\delta$ to $0$ followed by the inclusion). This chain homotopy $h:Di(1,1)_{-n}\rightarrow Di(1,1)_{-n-1}$ (where $n$ is a natural integer and the subscript is the homological degree) is defined by $h(\delta^{\circ n})=\delta^{n+1}$. All complexes over $\mathbb{K}$ are fibrant and cofibrant, so chain homotopies are equivalent to homotopies in the sense of model category theory. Given that the converse composite, the inclusion followed by the projection, equals the identity, this means that these form a homotopy equivalence. All chain complexes here being fibrant and cofibrant, a homotopy equivalence is a quasi-isomorphism. Moreover, $Di(m,n)=I(m,n)=0$ for $(m,n)\neq (1,1)$, so the initial morphism and the projection are indeed homotopy inverse quasi-isomorphisms of properads.
\end{proof}

This operad $Di$ is a model for the moduli problem associated to derived homotopy self-equivalences $\underline{haut}(X)$.
Indeed, the operadic moduli space $\underline{Di\{X\}}$ of a $Di$-algebra $X$ controls the homotopy automorphism of the underlying complex:
\begin{prop}\label{P:Di=haut}
There is an isomorphism of dg Lie algebras
\[
g_{Di,X}^{triv}\simeq Lie(\underline{haut}(X)).
\]
\end{prop}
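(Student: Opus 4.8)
The plan is to compute $g_{Di,X}^{triv}$ explicitly and identify it with the dg Lie algebra $End(X)=Hom_{Ch_{\mathbb{K}}}(X,X)$, which by Proposition~\ref{P:hautcplx} is equivalent to $Lie(\underline{haut}(X))$. Since $Di=(\mathcal{F}(E),\partial)$ is quasi-free, hence cofibrant (Remark~\ref{R:Diiscofibrant}), I would not need any further resolution: Proposition~\ref{P:TangentLinfty} together with the derivation description $g_{P,X}^{\psi}\cong Der_{\psi}(\Omega(C),End_X)$ lets me write $g_{Di,X}^{triv}\cong Der_{triv}(Di,End_X)$, the complex of derivations over the trivial structure $triv:Di\rightarrow End_X$ sending $\delta$ to $0$.

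First I would identify the underlying graded space. A derivation of the free properad $\mathcal{F}(E)$ into $End_X$ is determined by its restriction to the generators; since $E(1,1)=\mathbb{K}\delta$ and $E(m,n)=0$ otherwise, such a derivation is exactly the datum of the image of $\delta$, i.e. an element of $End_X(1,1)=Hom(X,X)$. Thus there is an isomorphism of graded vector spaces $Der_{triv}(Di,End_X)\cong Hom(X,X)$, where the degree of $\delta$ together with the suspension implicit in the convolution complex is arranged so that degree $1$ elements correspond precisely to the twisting cochains $\delta_V$ of Lemma~\ref{L:Di}, i.e. to the Maurer--Cartan elements cutting out the perturbed differentials $d_X-\delta_V$.

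Next I would match the differential and the bracket. The differential of $Der_{triv}(Di,End_X)$ evaluated on a derivation $D$ with $D(\delta)=f$ is $[d_X,f]\pm D(\partial\delta)$; since $\partial\delta=\delta\circ\delta$ and $triv(\delta)=0$, the Leibniz rule forces $D(\delta\circ\delta)=0$, so the differential reduces to the internal differential $f\mapsto [d_X,f]$ of the dg Lie algebra $End(X)$. For the bracket, the convolution Lie structure is the antisymmetrisation of the product built from the infinitesimal composition $\circ:Di(1)\otimes Di(1)\rightarrow Di(1)$, $\delta\otimes\delta\mapsto\delta\circ\delta$, and the composition product of $End_X$; evaluated on the single generator $\delta$ this sends a pair $(f,g)$ to $f\circ g$, whose antisymmetrisation is exactly the commutator $[f,g]=f\circ g\mp g\circ f$ of $Hom(X,X)$. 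Since $Di$ is concentrated in arity $(1,1)$ with a single generator, no higher convolution brackets can occur, so the resulting $L_\infty$-structure is the strict dg Lie algebra $End(X)$.

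Finally, combining these identifications gives an isomorphism of dg Lie algebras $g_{Di,X}^{triv}\cong End(X)$, and Proposition~\ref{P:hautcplx} supplies the equivalence $End(X)\simeq Lie(\underline{haut}(X))$, yielding the claim. I expect the only genuine subtlety to be the bookkeeping: verifying that the suspension and degree conventions make the Maurer--Cartan locus agree with the twisted differentials of Lemma~\ref{L:Di}, and checking signs so that the antisymmetrised convolution product is literally the commutator rather than its negative. Everything else is forced by the fact that $Di$ is freely generated by a single arity-$(1,1)$ operation.
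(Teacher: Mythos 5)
Your proposal is correct and follows essentially the same route as the paper's proof: identify the underlying complex of $g_{Di,X}^{triv}$ with $Hom(X,X)$ (the paper does this directly as $Hom_{\Sigma}(\overline{C},End_X)=Hom(\mathbb{K}u,Hom(X,X))$, you via the isomorphic derivation complex $Der_{triv}(Di,End_X)$, an identification the paper itself records just before Proposition~\ref{P:TangentLinfty}), then check that the antisymmetrised convolution product is the commutator of composition and invoke Proposition~\ref{P:hautcplx}. Your extra remarks on the vanishing of the $D(\partial\delta)$ term and the absence of higher brackets are correct and only make explicit what the paper leaves implicit.
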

\begin{proof}
The operad $Di$ is a quasi-free resolution of $I$ in the sense of Theorem~\ref{T:Yal2}. It is of the form $Di=(\mathcal{F}(s^{-1}C),\partial)$, 
where $C$ is a cooperad generated by a single generator $u$ of degree $0$ with a coproduct determined by $\Delta_C(u)=u\otimes u$.
Moreover, the trivial $Di$-algebra structure $triv$ sends $u$ to $0$, 
so the Lie bracket on $g_{Di,X}^{triv}=Hom_{\Sigma}(\overline{C},End_X)$ is just the convolution Lie bracket obtained by taking the graded commutator 
of the convolution product. At the level of complexes, we have
\begin{eqnarray*}
g_{Di,X}^{triv} & = & Hom_{\Sigma}(\overline{C},End_X) \\
 & = & Hom_{Ch_{\mathbb{K}}}(\mathbb{K}u,Hom(X,X)) \\
  & \cong & End(X).
\end{eqnarray*}
It remains to compare the Lie structures. Since both Lie brackets are graded commutators of associative products, we just have to compare these products.
The product on $End(X)$ is the composition of homomorphisms. The product on $g_{Di,X}^{triv}$ is the convolution product,
obtained on two elements $f,g:\mathbb{K}u\rightarrow End(X)$ by applying first the infinitesimal cooperadic coproduct $\Delta_{(1)}$ to $u$, 
then replacing the vertices by $f(u)$ and $g(u)$, and finally composing these maps in $End(X)$.
Under the identification of $Hom_{Ch_{\mathbb{K}}}(\mathbb{K}u,End(X))$ with $End(X)$, 
this gives exactly the composition product on $End(X)$ so the two structures agree.
\end{proof}
\begin{rem}
At the level of derived formal groups, this means that we have an equivalence $\widehat{\Omega_{triv}\underline{Di\{X\}}}\simeq \widehat{\underline{haut(X)}}_{id}$ (by taking the loops on the corresponding derived formal moduli problems).
\end{rem}

\subsection{Computing the tangent Lie algebra of homotopy automorphims as an explicit convolution complex}\label{SS:tgtasplus}

In this section, we relate $Lie\big(\underline{haut_{P_\infty}}(X,\varphi)\big)$ with the plus construction. 

\smallskip

\begin{lem}\label{L:pushoutDi} Let $P$ be a properad. 
There is a cofibre sequence of properads
\[
Di\rightarrow P_{\infty}^+\rightarrow P_{\infty}.
\]
where $Di\rightarrow P_{\infty}^+$ is an inclusion and $P_{\infty}^+\rightarrow P_{\infty}$ the forgetful map sending the generator of $Di$ to $0$.
\end{lem}
\begin{proof}
We compare first $P_{\infty}^+$ and $P_{\infty}\vee Di$, where $\vee$ stands for the coproduct of properads 
(see \cite[Appendix A.3]{MV2} for its definition). Since the free properad functor $\mathcal{F}$ is a left adjoint, 
it preserves coproducts and thus comes with natural isomorphisms $\mathcal{F}(M\oplus N)\cong\mathcal{F}(M)\vee\mathcal{F}(N)$. 
If we take the coproduct $P_{\infty}\vee Q_{\infty}$ of two quasi-free properads $P_{\infty}=(\mathcal{F}(M),\partial_P)$ 
and $Q_{\infty}=(\mathcal{F}(M),\partial_Q)$, then via the previous isomorphism we can define a differential on $\mathcal{F}(M\oplus N)$ 
by taking the derivation associated to
\[
\partial_P|_M\oplus \partial_Q|_N:M\oplus N\rightarrow \mathcal{F}(M)\oplus \mathcal{F}(N)\hookrightarrow \mathcal{F}(M\oplus N)
\]
by universal property of derivations and the fact that this morphism satisfies the twisting cochain equation.
In the case where $Q=Di$, it turns out that the free properad underlying $P_{\infty}^+$ is $\mathcal{F}(M\oplus\mathbb{K}d)$ 
and the differential on $P_{\infty}^+$ is the sum of the one above with a derivation coming from the action of $u$ in Definition \ref{D:plusconstruction}. We have in particular a \emph{graded} properad equality $P_{\infty}^+ = P_{\infty}\vee Di$. The canonical inclusion $Di\rightarrow P_{\infty}\vee Di$ is compatible with the differential of $P_{\infty}^+$ since the restriction of the later to $Di$ is exactly, by definition, the differential of $Di$. This canonical inclusion, in turn, is induced by the inclusion of $\Sigma$-bimodules $\mathbb{K}u\hookrightarrow M\oplus \mathbb{K}u$ whose cokernel is $M$. The free properad functor commutes with cokernels and the induced cokernel sequence is compatible with the differentials, hence the claim.
\end{proof}
\begin{rem}
The properads $Di$ and $P_{\infty}^+$ are not cofibrant. Moreover, it seems unlikely that the morphism $Di\rightarrow P_{\infty}^+$ forms a cofibration. Indeed, according to the characterization of cofibrations of properads given by \cite[Proposition 37]{MV2} (very close in spirit to the notion of relative Sullivan model in rational homotopy theory), we need to get a retract of a map of the form $Q\rightarrow Q\vee \mathcal{F}(S)$ where the generators $S$ of the quasi-free part are equipped with an increasing exhaustive filtration and the differential lowers the filtration rank of the elements of $S$. But as we see in Definition \ref{D:plusconstruction}, there is a part of the differential which cannot lower the filtration rank of any filtration one considers on the generators of $P_{\infty}$.

Moreover, the model category of properads is not left proper, so the sequence above cannot be a homotopy cofibre sequence of properads without cofibrancy of the properads involved.
\end{rem}
Although not being a homotopy cofibre sequence, it induces a homotopy fiber sequence of deformation complexes:
\begin{prop} \label{P:fiberseqplus}
The cofiber sequence of Lemma~\ref{L:pushoutDi} induces a homotopy fiber sequence of $L_{\infty}$-algebras
\[
g_{P,X}^{\varphi}\rightarrow g_{P^+,X}^{\varphi^+}\rightarrow g_{Di,X}^{triv}.
\]
\end{prop}
\begin{proof}
The cofibre sequence of Lemma~\ref{L:pushoutDi} induces a fibre sequence of convolution $L_{\infty}$-algebras
\[
g_{P,X}\rightarrow g_{P^+,X}\rightarrow g_{Di,X}.
\]
Indeed, by direct computation at the chain level $g_{P,X}$ is the kernel of the second map, and these maps are compatible with the convolution brackets.
Moreover, for any $P_{\infty}$-algebra $(X,\varphi)$, we have 
a commutative diagram of properad morphisms
\[
\xymatrix{
Di \ar@{^{(}->}[r] \ar[dr]^-{triv} & P_{\infty}^+ \ar@{->>}[r] \ar[d]^-{\varphi^+} & P_{\infty} \ar[dl]_-{\varphi} \\
 & End_X & }
\]
where the maps relating $Di$, $P_{\infty}$ and $P_{\infty}^+$ are the ones defined in the Lemma~\eqref{L:pushoutDi}.
Consequently, along this fiber sequence of $L_{\infty}$-algebras, the Maurer-Cartan element $\varphi$ of $g_{P,X}$ 
is sent to $\varphi^+$, which is in turn sent to $triv$. Twisting our $L_{\infty}$-algebras by these Maurer-Cartan elements produces a new fiber sequence
\[
g_{P,X}^{\varphi}\rightarrow g_{P^+,X}^{\varphi^+}\rightarrow g_{Di,X}^{triv}.
\]
To conclude, the second arrow is a surjection, hence a fibration in the model category of $L_{\infty}$-algebras, and all objects are fibrant, so this fiber sequence is a homotopy fiber sequence.
\end{proof}

To conclude, we compare this fiber sequence with the fiber sequence
\[
g_{P,X}^{\varphi}\rightarrow Lie(\underline{haut_{P_{\infty}-Alg}}(X,\varphi))\rightarrow Lie(\underline{haut}(X)).
\]
of Theorem~\ref{P: hofibgroups} to obtain:
\begin{thm}\label{T:Def+=hAut}
There is a quasi-isomorphism of $L_{\infty}$-algebras
\[
g_{P^+,X}^{\varphi^+}\simeq Lie(\underline{haut}_{P_{\infty}}(X,\varphi)).
\]
\end{thm}
\begin{proof}
We already have by Theorem~\ref{P:BorelvsRezknerve} an equivalence of homotopy fiber sequences
\[
\xymatrix{
\underline{P_{\infty}\{X\}}\ar[r]\ar[d]^{\sim} & \underline{\mathcal{N}wP_{\infty}-Alg}|_X\ar[r]\ar[d]^{\sim} & B\underline{haut}(X)\ar[d]^{\sim} \\
\underline{P_{\infty}\{X\}}\ar[r] & \underline{P_{\infty}\{X\}}//\underline{haut}(X)\ar[r] & B\underline{haut}(X)
}
\]
inducing a homotopy fiber sequence of $L_{\infty}$-algebras
\[
\xymatrix{
g_{P,X}^{\varphi}\ar[r]\ar[d]^{\sim} &Lie(\underline{haut}_{P_{\infty}-Alg}(X,\varphi)) \ar[r]\ar[d]^{\sim} & End(X)\ar[d]^{\sim} \\
g_{P,X}^{\varphi}\ar[r] & Lie(\Omega_{[\varphi]}(\underline{P_{\infty}\{X\}}//\underline{haut}(X)))\ar[r] & End(X)
}
\]
by Proposition \ref{P:BorelvsRezktangent}.
To conclude the proof, we have to compare  the lower fiber sequence with the fiber sequence
\[
g_{P,X}^{\varphi}\rightarrow g_{P^+,X}^{\varphi^+}\rightarrow g_{Di,X}^{triv}
\]
of Proposition \ref{P:fiberseqplus}. By the Lurie-Pridham correspondence, this fiber sequence gives a fiber sequence of formal moduli problems
\[
\widehat{\underline{P_{\infty}\{X\}}}\rightarrow \widehat{\underline{P_{\infty}^+\{X\}}}\rightarrow \widehat{\underline{Di\{X\}}}
\]
(as well as a fiber sequence of the corresponding derived formal groups of loops).

From the corresponding long fibration sequences we get two fiber sequences
\begin{equation}\label{eq:Def+haut1}
\Omega_{[\varphi]}\widehat{\underline{P_{\infty}\{X\}}//\underline{haut}(X)}\rightarrow \widehat{\underline{haut}(X)}\rightarrow \widehat{\underline{P_{\infty}\{X\}}}
\end{equation}
and
\begin{equation}
 \label{eq:Def+haut2}
\Omega_{\varphi^+}\widehat{\underline{P_{\infty}^+\{X\}}}\rightarrow \Omega_{triv}\widehat{\underline{Di\{X\}}}\rightarrow \widehat{\underline{P_{\infty}\{X\}}}.
\end{equation}
Our strategy is to define a commutative square
\begin{equation}\label{eq:Def+haut}
\xymatrix{
\Omega_{triv}\underline{Di\{X\}}\ar[r]\ar[d] & \underline{P_{\infty}\{X\}}\ar[d] \\
\underline{haut}(X)\ar[r]  & \underline{P_{\infty}\{X\}}
}
\end{equation}
hence inducing a morphism between the corresponding homotopy fiber sequences, so that the two vertical arrows induce equivalences of $L_{\infty}$-algebras 
at the tangent level, after completion of the derived groups at the appropriate base points.

The right hand vertical arrow of~\eqref{eq:Def+haut} is just the identity morphism.
Recall that, by \cite[Theorem 5.2.1]{Fre-cyl}, if $O_{\infty}$ is the cobar construction on a $\Sigma$-cofibrant dg cooperad, 
then the homotopies between two morphisms $\varphi,\psi:O_{\infty}\rightarrow End_X$ are in bijection with $\infty$-quasi-isotopies in $O_{\infty}-Alg$
between the corresponding $O_{\infty}$-algebras, that is, $\infty$-quasi-isomorphisms whose first level 
lies in the connected component of the identity in $haut(X)$. 

In particular, a loop in $\Omega_{\varphi}O_{\infty}\{X\}$, that is, a self-homotopy of $\varphi$, induces a self-$\infty$-isotopy of $(X,\varphi)$. 
In the particular case where the operad is augmented and $\varphi$ is the trivial $O_{\infty}$-algebra structure on $X$ 
(that is, it factorizes through the augmentation $O_{\infty}\rightarrow I$), 
then such a self-$\infty$-isotopy is just a self quasi-isomorphism in the connected component of the identity. Consequently, there is a natural map
\[
\Omega_{triv}\underline{Di\{X\}}\rightarrow \Omega_{triv}\underline{\mathcal{B}^{co}\mathcal{B}Di\{X\}}\rightarrow \underline{haut}(X)
\]
where $\mathcal{B}^{co}\mathcal{B}Di$ is the bar-cobar resolution of $Di$, the first arrow is induced by the operad morphism $\mathcal{B}^{co}\mathcal{B}Di\rightarrow Di$ and the second arrow is the one explained above which takes loops on the trivial algebra structure to self-$\infty$-isotopies of the trivial algebra structure, that is, to self-quasi-isomorphisms.
This natural map makes  the commutative square~\eqref{eq:Def+haut} above commutes, and becomes an isomorphism when restricting the target to the connected component of $id_X$. 
This means that, even though this map is not an equivalence, 
taking the tangent Lie algebras of the derived formal groups obtained after completions at the appropriate base points
(trivial loop on the left, $id_X$ on the right) leads to a quasi-isomorphism of Lie algebras
\[
g_{Di,X}^{triv}\stackrel{\simeq}{\rightarrow} Lie(\widehat{\underline{haut}(X)}_{id})=End(X).
\]

Now, the formal completion of this commutative square~\eqref{eq:Def+haut} induces a morphism between the homotopy fibers given by~\eqref{eq:Def+haut1} and~\eqref{eq:Def+haut2}
\[
\Omega_{\varphi^+}\widehat{\underline{P_{\infty}^+\{X\}}} \rightarrow \Omega_{[\varphi]}\widehat{\underline{P_{\infty}\{X\}}//\underline{haut}(X)}.
\]
Since this square becomes a square of quasi-isomorphisms of Lie algebras at the tangent level, the induced morphism between the Lie algebras of the fibers is a quasi-isomorphim as well
\[
g_{P^+,X}^{\varphi^+} = Lie(\widehat{\Omega_{\varphi^+}\underline{P_{\infty}^+\{X\}}}) \stackrel{\sim}{\rightarrow} Lie(\widehat{\Omega_{[\varphi]}\underline{P_{\infty}\{X\}}//\underline{haut}(X)}).
\]
Therefore, Proposition~\ref{P:BorelvsRezktangent} gives us the equivalence of the later Lie algebra with $Lie(\underline{haut}_{P_{\infty}}(X,\varphi))$.
\end{proof}
\begin{rem}
Although it is interesting to see the role of the Borel construction here, there is an alternate 
proof of Theorem~\ref{T:Def+=hAut} which makes no use of it. Let us sketch it; for this, we compare the fiber sequences
\[
\Omega_{[\varphi]}\underline{P_{\infty}\{X\}}//\underline{haut}(X)\rightarrow \underline{haut}(X)\rightarrow \underline{P_{\infty}\{X\}}
\]
and
\[
\underline{haut}_{P_{\infty}}(X,\varphi)\rightarrow \underline{haut}(X)\rightarrow \underline{P_{\infty}\{X\}}
\]
by checking that actually, in both cases we are considering the fibers of the same map from $\underline{haut}(X)$ to $\underline{P_{\infty}\{X\}}$, which is the map sending a homotopy automorphism to its action on $\varphi$.
Hence an equivalence of Lie algebras
\[
Lie(\underline{haut}_{P_{\infty}}(X,\varphi))\simeq Lie(\Omega_{[\varphi]}\underline{P_{\infty}\{X\}}//\underline{haut}(X)).
\]
Then, the argument line of the proof above provides the equivalence
\[
Lie(\Omega_{[\varphi]}\underline{P_{\infty}\{X\}}//\underline{haut}(X)).
\]
\end{rem}
Theorem~\ref{T:Def+=hAut} shows that the $+$ construction is crucial to study 
deformation of dg-algebras and not just deformations of algebraic structures on a fixed complex.

\subsection{Computing the tangent Lie algebra of homotopy automorphims as a twisted semi-direct product}

\subsubsection{Homotopy representations of $L_{\infty}$-algebras and semi-direct products}

Recall (\ref{D:Linfty}) that the structure of a $L_\infty$-algebra $g$ is encoded by a (cohomogical degree $-1$) coderivation $Q_g$ of square zero on 
$Sym^{\bullet \geqslant 1}(g[1])$. Dualizing this coderivation induces an augmented cdga structure on  \[{C}_{CE}^*(g):=Hom (Sym^\bullet(g[1]), k)\]
which is called 
the \emph{Chevalley-Eilenberg cochain algebra} of $g$; we denote $\varepsilon$ the augmentation. 
For any graded module $N$, $Hom (Sym^\bullet(g[1]), N)$ inherits similarly a structure of graded 
${C}_{CE}^*(g)$-module. 
\begin{defn}\label{Def:RepofLinfty} Let $(g, Q_g)$ be a $L_\infty$-algebra and $(M, d_M)\in Ch_{\mathbb{K}}$.

 A homotopy representation of an $L_{\infty}$-algebra $g$ on $M$  is a \emph{derivation} $D$ of \emph{square zero} and (cohomological) degree $1$ 
on $C_{CE}^*(g,M):=Hom_{dg}(Sym^\bullet(g[1]), M)$ such that $M\stackrel{D}\to C_{CE}(g,M)\stackrel{\varepsilon}\to M$ is equal to the inner differential 
$d_M$ of the complex $M$. The fact that $D$ is a derivation means precisely that it satisfies the following Leibniz relation: for $f \in {C}_{CE}^{n}(g)$, 
$\Phi \in C^*_{CE}(g,M)$, one has 
\begin{equation}
 D( f\cdot \Phi ) = Q_g(f) \cdot \Phi + (-1)^{n} f\cdot D(\Phi).
\end{equation}
\end{defn} 

\begin{example}A  particular case of homotopy representation is the standard notion of representation, given by a dg Lie algebra morphism $g\rightarrow End(M)$ and the 
 standard Chevalley-Eilenberg cochain complexes. This generalizes easily to any $L_\infty$-algebra $g$ and morphism of $L_{\infty}$-algebras $g\to End_{Ch_{\mathbb{K}}}(M,M)$.
 \end{example} 
Definition~\ref{Def:RepofLinfty} is equivalent to the data of a $L_\infty$-algebra structure on $g\oplus M$, that is  a coderivation of square zero 
on $Sym^{\bullet \geqslant 1}((g\oplus M)[1])$ that vanishes on the coideal spanned by $ Sym^{\bullet \geqslant 2}(M)$ and whose restriction 
to $Sym^{\bullet \geqslant 1}(g[1])$ and $M$ are respectively $Q_g$ and the inner differential of $M$
(followed by the canonical inclusions of these complexes 
in $Sym^{\bullet \geqslant 1}((g\oplus M)[1])$). In other words it is a square zero extension by $M$ of the $L_\infty$-algebra structure of $g$.

 \begin{example}[semi-direct product]\label{ex:semidirectofLinfty} If $h$ is a dg Lie algebra, any dg Lie algebra homomorphism
 $g \to Der(h)$ induces an action of $g$ onto $C^{CE}_* (h)$ as the coderivation extending the $g$-action on $h$. Similarly, if $h$ and $g$ are  $L_{\infty}$-algebras and given an $L_{\infty}$-algebra morphism $\varphi:g\rightarrow Der(h)$, we obtain a homotopy representation of $g$ on 
 $C^{CE}_* (h)$. 
 The coalgebra structure of $C^{CE}_* (h)$ then yields respectively a cocommutative dg-coalgebra and a cdga structure on 
 \begin{equation}  CE_*(g,h) := C^{CE}_* \big(g, C^{CE}_* (h)\big), \quad CE^*(g,h) := C_{CE}^* \big(g, C_{CE}^* (h)\big). \end{equation}
 The underlying graded vector space is $CE_*(g,h) = Sym \big( g[1] \oplus h[1]\big)$, and the \emph{semi-direct product $h\rtimes g$} is the direct sum $g\oplus h$ equipped with the $L_{\infty}$-algebra structure 
coming from the differential on the coalgebra $CE_*(g,h)$. 
In particular, the augmentations yield a
cofiber sequence of cdgas
\[
C^*_{CE}(g)\rightarrow CE^*(g,h)\rightarrow C^*_{CE}(h),
\]
which is dual to a fiber sequence of dg-cocommutative coalgebras 
\[
C_*^{CE}(h)\rightarrow CE_*(g,h)\rightarrow C_*^{CE}(g),
\] which is equivalent to a fiber sequence of
$L_{\infty}$-algebras
\[
h\rightarrow h\rtimes g \rightarrow g
\]
forming a split extension of $g$ by $h$. 

\end{example}
\begin{example}
 In particular the adjoint action  $ad:g\to Der(g)$ of a $L_\infty$-algebra $g$ on itself yields the semi-direct product $g\rtimes_{ad} g$.
\end{example}

\subsubsection{The case of the trivial algebra structure}

To consider the trivial algebra structure, we assume that $P_{\infty}$ is augmented for such a trivial structure to make sense, via the morphism $P_{\infty}\rightarrow I\rightarrow End_X$. This assumption is satisfied for a wide class of properads including Koszul and homotopy Koszul properads. Recall that $g_{P,X}^0$ is the $L_\infty$-algebra encoding the formal moduli problem $\underline{P_{\infty}\{X\}^0}$ and is nothing but the untwisted properadic convolution  $L_\infty$-algebra of Section $2$.

\begin{lem}\label{L:trivialsemidirect}
There is a filtered quasi-isomorphism of $L_{\infty}$-algebras
\[
g_{P^+,X}\simeq g_{P,X}^0\rtimes End(X).
\]
\end{lem}
\begin{proof}
We have a splitting of complexes
\[
Hom_{\Sigma}(\overline{\mathcal{C}}\oplus s\mathbb{K}u,End_X))\cong Hom_{\Sigma}(\overline{\mathcal{C}},End_X)\oplus g_{Di,X}^0 \cong g_{P,X}^0\oplus End(X)
\]
which is moreover compatible with the convolution products (there is no twist of the differential by a non trivial Maurer-Cartan element).
\end{proof}

\begin{rem}\label{L:sectionofhautPinfty}
We can also get, although more abstractly, the quasi-isomorphism above by constructing a section directly at the level of derived formal groups. As we have already seen, the forgetful functor mapping $P_\infty$-algebras to their underlying complexes induces a morphism of the homotopy automorphisms derived prestack groups of both categories.
Denoting by $(X,0)$ the trivial $P_\infty$-algebra with underlying complex $X$,
 the forgetful derived formal group morphism 
 \[\widehat{\underline{haut_{P_\infty -Alg}}}(X,0)_{id} \, \longrightarrow \, \widehat{\underline{haut}}(X)_{id} \]
 has a section in derived formal groups.

To prove this, by Lemma~\ref{L:hautasLoop}, since being group-like is a property, 
it is sufficient to construct a  $E_1$-monoid morphism 
\begin{equation}\label{eq:seekformonoidmap} \Omega_{X\otimes R}\Big(\mathcal{N}wCh_{{R}}\Big) \longrightarrow \Omega_{(X\otimes R,0)}
 \Big(
\mathcal{N}wP_{\infty}-Alg(Mod_R^{cof})\Big)\end{equation}
of simplicial presheaves which is a section of the forgetful morphism. Then, by applying the homotopy fiber functor over the identities we get a section of the forgetful derived formal group morphism. Since we are taking loop spaces at $X\otimes R$ and $(X\otimes R,0)$, we can restrict the considered spaces respectively to the connected component $(\mathcal{N}wCh_{{R}})_{X\otimes R}$ of $X\otimes R$ and the connected component $\mathcal{N}wP_{\infty}-Alg(Mod_R^{cof})_{(X\otimes R,0)}$ of $(X\otimes R,0)$. We search for a pointed map
\[
(\mathcal{N}wCh_{{R}})_{X\otimes R}\rightarrow \mathcal{N}wP_{\infty}-Alg(Mod_R^{cof})_{(X\otimes R,0)}
\]
(where the base points are respectively $X\otimes R$ and $(X\otimes R,0)$
whose composite with the forgetful map is the identity, so that applying the pointed loop space functor gives us the map \ref{eq:seekformonoidmap}.

For simplicity, we will use the description of props as strict small symmetric monoidal dg categories and morphisms of props as symmetric monoidal dg functors. Let us note $Fun_{dg}^{\otimes}(-,-)$ for the category of symmetric monoidal $R$-dg functors between dg categories with symmetric monoidal natural dg transformations. The category of $P_{\infty}\otimes R$-algebras whose underlying complex is $X\otimes R$ can then alternately be described as $Fun_{dg}^{\otimes}(P_{\infty}\otimes R,End_{X\otimes R})$ : one checks that such a natural transformation is defined by a collection of maps $\{\tau (n):(X\otimes R)^{\otimes n}\rightarrow (X\otimes R)^{\otimes n}\}_{n\in\mathbb{N}}$ and that, by strict monoidality and by its compatibility with the functors, it is uniquely determined by the data of the morphism $\tau (1): X\otimes R\rightarrow X\otimes R$ compatible with the $P_{\infty}$-algebra structures. The trivial prop morphism $0:P_{\infty}\otimes R\rightarrow I\otimes R\rightarrow End_{X\otimes R}$ then defines a functor, and the precomposition by $0$ induces a functor of $R$-dg categories
\[
0^*:Fun_{dg}^{\otimes}(End_{X\otimes R},End_{X\otimes R})\rightarrow Fun_{dg}^{\otimes}(P_{\infty}\otimes R,End_{X\otimes R})
\]
(precomposition of a natural transformation $\tau:F\rightarrow G$ by a functor $H$ gives still a natural transformation $\tau\circ H: F\circ H\rightarrow G\circ H$),
hence the simplicial map
\[
\mathcal{N}wFun_{dg}^{\otimes}(End_{X\otimes R},End_{X\otimes R})\rightarrow \mathcal{N}wFun_{dg}^{\otimes}(P_{\infty}\otimes R,End_{X\otimes R})\hookrightarrow \mathcal{N}wP_{\infty}-Alg(Mod_R^{cof}).
\]
This map sends $Id_{End_{X\otimes R}}$ to $(X\otimes R,0)$, so it sends the connected component of $Id_{End_{X\otimes R}}$ in the simplicial nerve into $\mathcal{N}wP_{\infty}-Alg(Mod_R^{cof})_{(X\otimes R,0)}$. The connected component of $Id_{End_{X\otimes R}}$ is $Bhaut(X)$. Indeed, by strict monoidality, a natural weak self-equivalence of the identity functor $Id_{End_{X\otimes R}}$ is uniquely determined by a self weak equivalence of $X\otimes R$ and vice-versa.
Finally, the composite with the forgetful map is obviously $Id_{Bhaut(X)}$ because $0^*$ does not change  nor the underlying complex $X\otimes R$ neither its chain self-weak equivalences.
\end{rem}
\begin{rem}
Another way to build a section is to consider the composite map
\[
\Omega_{id}\underline{End_X\{X\}}\rightarrow \Omega_{0}\underline{P_{\infty}\{X\}}\rightarrow\underline{haut_{P_{\infty}}(X,0)}
\]
where the first map is the looping of the precomposition by $0$ between the simplicial mapping spaces and the second map is the looping of the map induced by Rezk's homotopy pullback theorem. Then, one notices that $\underline{End_X\{X\}}$ is, for each cdga $R$, the nerve $\mathcal{N}wFun_{dg}^{\otimes}(End_{X\otimes R},End_{X\otimes R})$ considered in the proof above, so $\Omega_{id}\underline{End_X\{X\}}$ is  $\Omega_{id}\underline{\mathcal{N}wFun_{dg}^{\otimes}(End_{X},End_{X})}=\Omega_{id}B\underline{haut(X)}=\underline{haut(X)}$.
\end{rem}

\medskip

\subsubsection{Twisted semi-direct products and the general case}

We borrow the notion of \textbf{twisted semi-direct product of dg Lie algebras} from \cite{Tan,Ber}. We first give the definition  in details for the case of dg Lie algebras for the sake of clarity, before extending this notion to the case of $L_{\infty}$-algebras.
Let $h$ be a dg Lie algebra acting by derivations on another dg Lie algebra $g$ and  $\xi:h[-1]\rightarrow g$ be a map, called a twist, satisfying the axioms given \cite[Definition 3.13]{Ber}. One define their twisted semi-direct product  $g\rtimes_{\xi}h$ as follows. As a graded Lie algebra, this is the untwisted semi-direct product  $g\rtimes h$ with its Lie bracket defined for any $(a,x),(b,y)\in g\oplus h$
\[
[(a,x),(b,y)] = ([a,b]+x.b+a.y,[x,y])
\]
where $a.y$ is the action of $y$ on $a$ and $x.b=-(-1)^{|x||b|}b.x$. As a dg Lie algebra, its differential is the twisted differential
\[
\partial^{\xi}(a,x) = (da+\xi(x),dx).
\]
\begin{lem}\label{L:LiesemidirectMCtwist}
\begin{itemize}
\item[(1)] A pair $(a,x)\in g\oplus h$ is a Maurer-Cartan element in the semi-direct product $g\rtimes h$ if and only if it satisfies the equations
\[
dx+\frac{1}{2}[x,x] = 0
\]
(that is, $x$ is Maurer-Cartan element of $h$)
and
\[
da+\frac{1}{2}[a,a] = -a.x.
\]

\item[(2)] Given a Maurer-Cartan element $a$ of $g$, the pair $(a,0)$ is a Maurer-Cartan element of $g\rtimes h$ and the twisting $(g\rtimes h)^{(a,0)}$ is the twisted semi-direct product $g^a\rtimes_{\xi}h$ for $\xi(x)=a.x$.
\end{itemize}
\end{lem}
\begin{proof}
To prove (1), one just writes explicitely the Maurer-Cartan equation $d(a,x)+\frac{1}{2}[(a,x),(a,x)]=0$ for the Lie bracket of the semi-direct product $g\rtimes h$ :
\begin{eqnarray*}
d(a,x)+\frac{1}{2}[(a,x),(a,x)] & = & (da,dx)+\frac{1}{2}([a,a]+x.a+a.x,[x,x]) \\
 & = & (da+\frac{1}{2}([a,a]+2a.x),[x,x]) \\
 & = & (da+\frac{1}{2}[a,a]+a.x,[x,x]).
\end{eqnarray*}

To prove (2), let us start by noticing that, according to (1), a degree $1$ element of the form $(a,0)$ is a Maurer-Cartan element of $g\rtimes h$ if and only if $a$ is a Maurer-Cartan element of $g$. Moreover, the two dg Lie algebras $(g\rtimes h)^{(a,0)}$ and $g^a\rtimes_{\xi}h$ coincide as graded Lie algebras, so we just have to check that their differentials coincide as well. The new differential of the twisted Lie algebra $g^a$ is $d_a=d+[a,-]$ where $d$ is the differential of $g$. By definition of the Lie bracket of $g\rtimes h$, we have
\[
[(a,0),(b,y)] = ([a,b]+0.b+a.y,[0,y]) = ([a,b]+a.y,0)
\]
so the differential of $(g\rtimes h)^{(a,0)}$ is
\[
\partial_{(a,0)}(b,y) = d_{g\oplus h}(b,y)+ [(a,0),(b,y)] = (db+[a,b]+a.y,dy)=(db+[a,b]+\xi(y),dy).
\]
One recognizes the differential of $g^a\rtimes_{\xi}h$.
\end{proof}
\begin{rem}
As can be seen in the computations above, not any twisting by a Maurer-Cartan element can be expressed as a twisted semi-direct product in the sense of \cite[Definition 3.13]{Ber}.
\end{rem}

We focus now on similar constructions in the setting of $L_{\infty}$-algebras. As in Example \ref{ex:semidirectofLinfty}, we consider an $L_{\infty}$-algebra $h$ acting on another $L_{\infty}$-algebra $g$ by derivations, that is with a morphism of $L_{\infty}$-algebras $h\rightarrow Der(g)$. On the one hand, recall that in the untwisted case, we get a cofiber sequence of cdgas
\[
C^*_{CE}(h)\rightarrow CE^*(h,g)\rightarrow C^*_{CE}(g)
\]
where the differential of the middle term encodes the $L_{\infty}$-brackets of the semi-direct product $g\rtimes h$.Given a Maurer-Cartan element $a$ of $g$, the pair $(a,0)$ is a Maurer-Cartan element of $g\rtimes h$. Indeed, Maurer-Cartan elements correspond to augmentations of the corresponding Chevalley-Eilenberg algebras, so the Maurer-Cartan equation for $(a,0)$ follows from the following diagram of augmentations
\[
\xymatrix{
C^*_{CE}(h)\ar[r]\ar[dr] & CE^*(h,g)\ar[r]\ar[d] & C^*_{CE}(g)\ar[dl] \\
 & \mathbb{K} & 
}
\]
where the right vertical arrow corresponds to $a$, the left vertical arrow corresponds to $0$, and the middle vertical arrow is induced from these two by the formula defining $CE^*(h,g)$.

On the other hand, the notion of twisted semi-direct product still makes sense for $L_{\infty}$-algebras by defining it as follows:

\begin{defn}[Twisted semi-direct products of $L_{\infty}$-algebras]\label{D:semidirectMCtwist} Let $h$ be an $L_{\infty}$-algebra acting on an $L_{\infty}$-algebra $g$ by derivations.
A twisted semi-direct product of $g$ by $h$ is a $L_{\infty}$-algebra structure on the graded vector space $g\oplus h$ defined by a fiber sequence of dg coalgebras
\[
C_*^{CE}(g)\rightarrow CE_*^{\xi}(h,g)\rightarrow C_*^{CE}(h)
\]
where the middle term is the quasi-cofree coalgebra $Sym((g\oplus h)[1])$ equipped with a square zero derivation $D_{\xi}$ of degree $-1$ such that :
\begin{itemize}
\item[(1)] the part of weight $1$ is of the form $(d_g+\xi,d_h)$, where $\xi$ is a degree $-1$ map $\xi:h[-1]\rightarrow g$;

\item[(2)] we have $D_{\xi}=D+\xi$ where $D$ is the square zero coderivation on $Sym((g\oplus h)[1])$ defined by the action of $h$ on $g$.
\end{itemize}
We denote  $g\rtimes_{\xi} h$ the twisted semi-direct product given by $D_{\xi}$.
\end{defn}
The notion of twisted semi-direct product of dg Lie algebras from~\cite[Definition 3.13]{Ber} recalled above is a special case.
Indeed, the relation $D_{\xi}^2=0$ ensures the appropriate generalization, for $L_{\infty}$-algebras, of the compatibility of $\xi$ with the differentials of $g$ and $h$, with the higher $L_{\infty}$-brackets. Point (2) gives the compatibility with the action of $h$ on $g$. That is, the $L_{\infty}$-algebra $g\rtimes_{\xi} h$ has the same brackets as $g\rtimes h$ but a differential twisted by $\xi$.
It can be equivalently defined by the corresponding cofibre sequence of cdgas
\[
C^*_{CE}(h)\rightarrow CE^*_{\xi}(h,g)\rightarrow C^*_{CE}(g).
\]
As a consequence of the diagram of augmentations above and the definition of $\xi$,
we get a fiber sequence
\[
C^*_{CE}(h)\rightarrow (CE_{\xi}^*(h,g^a)\rightarrow C^*_{CE}(g^a).
\]
This means:
\begin{lem}\label{L:semidirectMCtwist}
Let $h$ and $g$ be two $L_{\infty}$-algebras such that $h$ acts on $g$ by derivations as above.
Given a Maurer-Cartan element $a$ of $g$, the Maurer-Cartan twisting $(g\rtimes h)^{(a,0)}$ is the twisted semi-direct product $g^a\rtimes_{\xi}h$ for $\xi(x)=a.x$.
\end{lem}

We conclude:
\begin{thm}\label{T:twistedsemidirprod}
There are equivalences of $L_{\infty}$-algebras
\[
Lie(\underline{haut_{P_{\infty}-Alg}}(X,\varphi)) \simeq g_{P^+,X}^{\varphi^+} \simeq g_{P,X}^{\varphi}\rtimes_{\xi} End(X),
\]
where the twist $\xi:End(X)[-1]\rightarrow g_{P,X}^{\varphi}$ is the tangent homotopy action of $End(X)$ on $\varphi$, given by:
\begin{itemize}
\item[(i)] the Maurer Cartan element $(\varphi,0)$ as in Lemma \ref{L:LiesemidirectMCtwist}.(2) when $P$ is a Koszul dg properad (in this case the convolutions $L_{\infty}$-algebras are dg Lie algebras);

\item[(ii)] the Maurer Cartan element $(\varphi,0)$ as in  Lemma \ref{L:semidirectMCtwist} when $P$ is any dg properad.
\end{itemize}
\end{thm}
\begin{proof}
The first equivalence is given by Theorem \ref{T:Def+=hAut}. Then, by definition, the properad morphism $\varphi^+$ sends the generator $u$ of Definition \ref{D:plusconstruction} to $0$, so under twisting by the Maurer-Cartan element $\varphi^+$, the filtered quasi-isomorphism of $L_{\infty}$-algebras of Lemma \ref{L:trivialsemidirect} becomes
\[
g_{P^+,X}^{\varphi^+} \simeq (g_{P,X}^0\rtimes End(X))^{(\varphi,0)}.
\]
Here we use the fact that Maurer-Cartan twistings of filtered quasi-isomorphisms of $L_{\infty}$-algebras are still quasi-isomorphisms, see for example \cite[Proposition 3.8]{Yal1}.
Then, the twisting of $g_{P,X}^0\rtimes End(X)$ by the Maurer-Cartan element $(\varphi,0)$ is the twisted semi-direct product $g_{P,X}^{\varphi}\rtimes_{\xi} End(X)$ where $\xi$ is defined by Lemma \ref{L:LiesemidirectMCtwist} in the dg Lie case and Lemma \ref{L:semidirectMCtwist} in the $L_{\infty}$-case.
\end{proof}

\begin{rem}
For a Koszul properad $P$, the dg Lie algebra $g_{P,X}^{\varphi}$ is obtained by twisting the convolution dg Lie algebra $g_{P,X}$ whose Lie bracket comes from a pre-Lie bracket. Moreover, the Lie bracket of $End(X)$ is the commmutator of an associative product so it comes also from a pre-Lie bracket. On the left hand side of Theorem \ref{T:twistedsemidirprod}, one considers the homotopy Lie structure associated to a grouplike $A_{\infty}$-monoïd. Put together, these observations lead to wonder whether Theorem \ref{T:twistedsemidirprod} could have a lift at the level of pre-Lie algebras.
\end{rem}

\begin{example}[Strict associative algebras]\label{ex:strictassocalgebra}
 Let $(A,\varphi)$ be a (necessarily strict) associative algebra concentrated in degree $0$. Then by proposition~\ref{P:hautcplx}, one has 
 $Lie(\underline{haut}(A)) \cong Hom(A,A)$ the Lie algebra (concentrated in degree $0$) of endomorphisms of the underlying vector space of $A$.
 
It is a standard computation (\cite{Mer1, Mer2}) 
that the Lie algebra $g_{Ass}^{\varphi}=\mathfrak{L}_{\underline{Ass_{\infty}\{A\}}^{\varphi}}$ is isomorphic to the subcomplex $C^{\bullet \geq 2}(A,A) [1]=
\bigoplus_{n\geq 2} Hom(A^{\otimes n}, A)[1-n]$ of the shifted Hochschild 
cochain complex $C^\bullet(A,A)[1] = \bigoplus_{n\geq 0} Hom(A^{\otimes n}, A)[1-n]$ with Lie bracket given by the restriction of the standard Gerstenhaber complex.

Using these equivalences and Theorem~\ref{T:twistedsemidirprod}, we have that 
\[Lie(\underline{haut}_{Ass_{\infty}}(A,m)) \, \cong \,  Hom ( A^{\otimes >1}, A)[1]\rtimes_{\xi} Hom(A,A) \;  \]
As a graded vector space, note that the right hand side is the same as the underlying graded vector space of $C^{\bullet \geq 1}(A,A)[1] =   \bigoplus_{n\geq 1} Hom(A^{\otimes n}, A)[1-n]$. The graded Lie bracket in  Theorem~\ref{T:twistedsemidirprod} is given by the properadic convolution product  hence it coincides with the restriction of the usual Gerstenhaber bracket to $ C^{\bullet \geq 1}(A,A)[1]$. It remains to identify the differential which by Lemma~\ref{L:semidirectMCtwist} and the above coincides with the usual Hochschild differential on $C^{\bullet \geq 2}(A,A) $. It remains to identifiy the differential on $ Hom(A,A)$ which is given, for any $f:A\to A$, by $f\mapsto \xi(f)$. By Lemma~\ref{L:trivialsemidirect}, the map $\xi$ is induced by the action of the Maurer Cartan $\varphi$ of
$\mathfrak{L}_{\underline{Ass_{\infty}\{A\}}}$ which in the case of a strict associative algebra is nothing more than the underlying multiplication of the algebra structure. Therefore $\xi(f) = [\varphi, f]$ is the convolution of the linear map $f$ with the product which is the usual Hochschild differential on linear maps.

Therefore we have an equivalence of underlying dg-Lie algebras: 
\[Lie(\underline{haut}_{Ass_{\infty}}(A,m)) \, \cong \,  Hom ( A^{\otimes >1}, A)[1]\rtimes_{\xi} Hom(A,A) \;  \, \cong\,  C^{\bullet \geq 1}(A,A)[1]\]

\smallskip 

In particular, the moduli space $\underline{Ass_{\infty}\{A\}}^{\varphi}(\mathbb{K}[[t]])$ controls the 
algebra structures on $A [[t]]$ whose reduction modulo $t$ is the given one.

However, the  moduli space $\underline{haut}_{Ass_\infty}(A,\varphi)(\mathbb{K}[[t]])$ controls the 
algebra structures on $A [[t]]$ whose reduction modulo $t$ is the given one, up to isomorphism of algebras which are the identity modulo $t$.

In other words, 
the set of connected components of such deformations in the first case is the set of all possible deformations, while the connected component of 
the derived prestack group $\underline{haut}_{Ass_\infty}(A,\varphi)$ are the set of all possible deformations modulo the standard gauge equivalences.

Example~\ref{ex:strictassocalgebra} can be generalized to algebras concentrated in degree 0 over other operads, see Section 6.1.

\smallskip 

Note that this result extends for dg-algebras (and $A_\infty$-algebras as well), see~\ref{SS:EnHoch}. 
\end{example}
If $P_\infty$ is a quasi-free resolution of a dg operad $P$, then there is another nice explicit description of $Lie(\underline{haut}_{P_\infty}(X,\varphi))$:
\begin{cor}\label{T:identificationBhautQFreeresol} Let $P_\infty=(\mathcal{F}(s^{-1}\overline{C}),\partial)\stackrel{\sim}{\rightarrow}P$ be 
a cofibrant quasi-free resolution of an operad $P$ where $C$ is a cooperad and $(X,\varphi)$ be a $P_\infty$-algebra.  
One has an equivalence of $L_{\infty}$-algebras
\[ Lie(\underline{haut}_{P_\infty}(X,\varphi)) \cong Coder( \overline{C}(X[1]))^{D_{\varphi}} \rtimes_{\xi} End(X,X)\]
where the last term is the $L_\infty$-algebra of coderivations of the cofree coalgebra on $X[1]$ 
twisted by the Maurer Cartan element $D_{\varphi}$ (the coderivation of square zero corresponding to the $P_{\infty}$-algebra structure $\varphi$) and the action of $End(X,X)$ is given by the composition of coderivations of $C(X[1])$. The twist $\xi$ is defined by Lemma \ref{L:semidirectMCtwist}. In the case where $P$ is Koszul and $C$ is its Koszul dual, we get a twisted semi-direct product of dg Lie algebras with the twist given by Lemma \ref{L:LiesemidirectMCtwist}.
\end{cor}
\begin{proof}
Similarly to \cite[Proposition 10.1.17]{LV} we have an isomorphism of $L_{\infty}$-algebras
\[
g_{P,X} \cong Coder( \overline{C}(X[1])).
\]
Then, the $P_{\infty}$-algebra structure $\varphi$ is a Maurer-Cartan element in $g_{P,X}$, whose image under the Lie algebra isomorphism above gives a Maurer-Cartan element $D_{\varphi}$ in $Coder( \overline{C}(X[1]))$, that is, a degree $1$ coderivation of square zero. Twisting this isomorphism by those Maurer-Cartan elements gives an isomorphism
\[
g_{P,X}^{\varphi} \cong Coder( \overline{C}(X[1]))^{D_{\varphi}}.
\]
Moreover, the tangent action of $End(X)$ on $g_{P,X}^{\varphi}$  gives under this isomorphism an action of $End(X)$ on $Coder( \overline{C}(X[1]))^{D_{\varphi}}$ defined by the composition of coderivations, and the twist $\xi$ depending on this action and on the Maurer-Cartan element $D_{\varphi}$ is given by Lemma \ref{L:LiesemidirectMCtwist}.
Therefore the equivalence between the r.h.s and l.h.s in the theorem follows from Theorem~\ref{T:twistedsemidirprod}.
\end{proof}

\begin{example}
Let $A$ be 
a dg-associative algebra. We can consider the truncated and full Hochschild complexes of $A$, respectively $CH^{\bullet >0}(A)$ and $CH_*(A)$ as in~\ref{ex:strictassocalgebra}. 
The \emph{full version} controls the deformation theory of the category of $A$-modules 
which is in general another kind of formal moduli problem. Equivalently, the full Hochschild complex controls the deformations of $A$ as a \emph{curved $A_{\infty}$-algebra}~\cite{Preygel}.

However, in the case where $A$ is concentrated in degree zero, we observe that, first, deformations of $A$ are deformations as a strict Poisson algebra or as a strict associative algebra, and second, curved and uncurved deformations are equivalent. Consequently, the space of Maurer Cartan elements are the same for the truncated and the untruncated versions of ¨ Hochschild complexes.

This observation is crucial in the study of formality theorems for Poisson algebras and deformation quantization  of Poisson structures on manifolds \cite{Ko2,Tam1}. Let us fix $A=\mathcal{C}^{\infty}(\mathbb{R}^d)$ the algebra of smooth functions on $\mathbb{R}^d$, and consider two complexes. First, the full Hochschild complex $CH^*(A,A)$, second, the complex of polyvector fields $T_{poly}(A)=\left( \bigoplus_{k\geq 0}\bigwedge^kDer(A)[-k] \right)[1]$ (recall that vector fields are derivations of the ring of smooth functions). The complex of polyvector fields also forms a (shifted) Lie algebra with a Lie structure induced by the bracket of vector fields. The classical Hochschild-Kostant-Rosenberg theorem (HKR for short) states that the cohomology of $CH^*(A,A)$ is precisely $T_{poly}(A)$. However, the HKR quasi-isomorphism is not compatible with their respective Lie algebra structures. In \cite{Ko2}, Kontsevich proved that the HKR quasi-isomorphism lifts to an $L_{\infty}$-quasi-isomorphism
\[
T_{poly}(A)[1] \stackrel{\sim}{\rightarrow} CH^*(A,A)[1]
\]
by building an explicit formality morphism. An alternative proof of the formality theorem is due to Tamarkin \cite{Tam1} and provides a formality quasi-isomorphism of homotopy Gerstenhaber algebras (that is $E_2$-algebras).
Here $T_{poly}(A)$ is actually the deformation complex of the trivial Poisson algebra structure. In general, the full Hochschild complex $CH^*(A,A)$ controls deformations of $A$ as a curved algebra, but since $A$ is in degree zero, the space of Maurer Cartan elements obtained from the full Hochschild complex is the same as the one from the truncated Hochschild complex. This is important, because the formality theorem holds for the full complex but not for the truncated one. This formality theorem implies the equivalence of the associated formal moduli problems. Then, applying these moduli problems to the ring of formal power series $\mathbb{K}[[t]]$, one gets that the the set of isomorphism classes Poisson algebra structures on $A[[\hbar]]$ without constant term is in bijection with gauge equivalence classes of $*_{\hbar}$-products (that is, associative formal deformations of the product of $A$).

There are similar phenomenon for other categories of algebraic structures such as (shifted) Poisson ones. See~\ref{S:Examples} and~\ref{S:Concluding}.
\end{example}

\section{Examples}\label{S:Examples}

\subsection{Deformations of $E_n$-algebras}\label{SS:EnHoch} We now 
generalize example~\ref{ex:strictassocalgebra} to  the homotopy setting and to higher algebras, that is $E_n$-algebras.
The latter are higher generalizations of homotopy associative algebras and form a hierarchy of ``more and more'' commutative and homotopy associative structures, interpolating between homotopy associative or $A_\infty$-algebras (that is $E_1$-algebras) and $E_{\infty}$-algebras. 

Algebras governed by $E_n$-operads and their deformation theory play a prominent role in a variety of topics such as the study 
of iterated loop spaces, Goodwillie-Weiss calculus for embedding spaces, deformation quantization of Poisson manifolds and Lie bialgebras, factorization homology and derived symplectic/Poisson geometry~\cite{Ko1, Ko2, Lur0, Lur2, CPTVV, FG, Fra, Fre5, GTZ, Hin, Kap-TFT, KoSo, May, Preygel, Tam1, Toen-ICM}. 


To define $E_n$-algebras, one first note that the  
configuration spaces of (rectilinear embeddings of) $n$-disks into a bigger $n$-disk gather into a topological operad $D_n$,
called the little $n$-disks operad. 
An $E_n$-operad (in chain complexes) is a dg-operad quasi-isomorphic to the singular chains $C_*(D_n)$ of the little $n$-disks operad. 
There is an $\infty$-functor from $E_n$-algebras to $L_\infty$-algebras whose composition with the forgetful functor to chain complexes 
is the shift $X\mapsto X[1-n]$.

Given an ordinary associative  algebra $A$, its endomorphisms $Hom_{biMod_A}(A,A)$ in the category $biMod_A$ 
of $A$-bimodules is isomorphic to the center $Z(A)$ of $A$. Deriving this hom object gives the Hochschild cochain complex 
$C^*(A,A)\cong \mathbb{R}Hom_{biMod_A}(A,A)$ of $A$,
and the associated Hochschild cohomology $HH^*(A,A)$ of $A$ satisfies $HH^0(A,A)=Z(A)$. For higher structures, one has the following definition 
(see~\cite{Fra, Lur2, GTZ}).
\begin{defn}\label{D:HochCochain}
The (full) Hochschild complex of an $E_n$-algebra $A$, computing its higher Hochschild cohomology,
is the derived hom $C^*_{E_n}(A,A)=\mathbb{R}Hom^{E_n}_A(A,A)$ in the category of (operadic) $A$-modules\footnote{note that the operadic 
$E_1$-module are precisely the bimodules} over $E_n$.
\end{defn} 
The Deligne conjecture endows the Hochschild cochain complex with an $E_{n+1}$-algebra structure~\cite[Theorem 6.28]{GTZ} 
or~\cite{Fra, Lur2}.  Associated to an $E_n$-algebra $A$, one also has its cotangent complex $L_A$, 
which classifies square-zero extensions of $A$~\cite{Fra, Lur2}. 
\begin{defn}[\cite{Fra}]\label{D:HochTangent} The tangent complex $T_A$ of an $E_n$-algebra $A$ is the dual
$T_A:= Hom^{E_n}_A(L_A,A)\cong \mathbb{R}Der(A,A)$.
\end{defn}
The latter isomorphism gives a $L_\infty$-structure to $T_A$ and 
Francis~\cite{Fra, Lur2} has proved that $T_A[-n]$ has a canonical structure of $E_{n+1}$-algebra (lifting the $L_\infty$-structure). 
He further proved 
that there is a fiber sequence \[T_A[-n]\to  CH^*_{E_n}(A,A) \to A\] where the first map is a map of $E_{n+1}$-algebras.

\smallskip 

A corollary of our theorem~\ref{T:Def+=hAut} is the following operadic identification of the tangent complex $T_A$ of
an $E_n$-algebra~(\ref{D:HochTangent}):
\begin{cor}\label{L:gE2+=TA}
The $E_n$-Hochschild tangent complex $T_A$ of an $E_n$-algebra $A$ is naturally weakly equivalent as an $L_\infty$-algebra   to $g_{E_n^+,A}^{\varphi^+}$:
\[
T_A\simeq Lie(\underline{haut}_{E_n}(A, \varphi))\simeq g_{E_n^+,A}^{\varphi^+},
\]
where $\varphi^+$ is the $E_n^+$-algebra structure on $A$ trivially induced by its $E_n$-algebra structure $\varphi:E_n\rightarrow End_A$ as above, and $\underline{haut}_{E_n}(A)$ is the derived prestack group of homotopy automorphisms of $A$ as an $E_n$-algebra.
\end{cor}
\begin{proof}
According to \cite[Lemma 4.31]{Fra}, the homotopy Lie algebra of homotopy automorphisms $Lie(\underline{haut}_{E_n}(A, \varphi))$
is equivalent to the tangent complex $T_A$ of $A$. Hence Theorem~\ref{T:Def+=hAut} implies the corollary.
\end{proof}
In particular, Theorem~\ref{T:Def+=hAut} implies that 
the tangent complex $T_A$ of an $E_n$-algebra splits as a semi-direct product of $End(A)$ with the operadic deformation complex of $A$ as an $E_n$-algebra.

\begin{rem}
The deformation problem of an $E_n$-algebra is an $E_{n+1}$-moduli problem and the $E_{n+1}$-algebra structure on $T_A[-n]$ is the one induced by Lurie's correspondence between augmented $E_{n+1}$-algebras and formal $E_{n+1}$-moduli problems. Shifting it to get an $L_{\infty}$-algebra structure on $T_A$, we obtain the $L_{\infty}$-algebra structure above, which corresponds to the formal commutative moduli problem associated to this $E_{n+1}$-moduli problem. Corollary \ref{L:gE2+=TA} gives an operadic explicit description of this formal commutative moduli problem.

The relation between $E_n$-moduli problems and commutative moduli problems is given by a commutative square of $\infty$-functors
\[
\xymatrix{
E_{n+1}-FMP_{\mathbb{K}}\ar[r]\ar[d] & FMP_{\mathbb{K}}\ar[d] \\
E_{n+1}-Alg_{\mathbb{K}}^{aug}\ar[r] & dgLie_{\mathbb{K}}
}
\]
where the vertical arrows are the equivalences of Lurie's correspondence theorem, the upper horizontal arrow is the forgetful functor induced by precomposition with the forgetful functor from commutative algebras to $E_n$-algebras, and the lower horizontal arrow associates to an augmented $E_{n+1}$-algebra $R$ with augmentation ideal $m_R$ a dg Lie algebra $m_R[n]$ \cite[Remark 4.0.10]{Lur0}.

By \cite[Theorem 4.0.8]{Lur0}, given an augmented $E_{n+1}$-algebra $R$, the complex $T[-n-1]$ (where ($T$ is the tangent complex functor) of the corresponding $E_{n+1}$-moduli problem is equivalent to the augmentation ideal $m_R$ of $R$. By the square above, one recovers that the dg Lie algebra of the associated formal commutative moduli problem is equivalent to the shifted tangent complex $T[-1]$.
\end{rem}

\subsection{Deformation complexes of $Pois_n$-algebras}
\label{SS:Poissn}
We now introduce  Tamarkin deformation complexes of a $Pois_n$-algebra~\cite{Ta-deformationofd-algebra}
and prove that these complexes \emph{do control deformations} of (dg-)$Pois_n$-algebras.

\smallskip

We denote by $Pois_n$ the operad of $Pois_n$-algebras and $uPois_n$ the operad of unital $Pois_n$-algebras.

\smallskip

Let $A$ be a dg $Pois_n$-algebra, with structure morphism $\varphi:Pois_n\rightarrow End_A$.  We denote by $CH_{Pois_n}^*(A,A)$ its $Pois_n$-Hochschild cochain complex, also referred to 
as its $Pois_n$-deformation complex as defined by Tamarkin~\cite{Ta-deformationofd-algebra} and Kontsevich~\cite{Ko1}.
Following
Calaque-Willwacher~\cite{CaWi}, we note that this complex is given by the suspension
\begin{equation}\label{eq:CHPoisDef}
 CH_{Pois_n}^{*}(A,A) \, := \, Hom_{\Sigma}(u {Pois_n}^* \{n\}, End_A)[-n] 
\end{equation}
of the underlying chain complex of the convolution Lie algebra. 
Here  $(-)^*$ is the linear dual and $\{n\}$ is the operadic $n$-iterated suspension.
The inclusion of $Pois_n$ in $uPois_n$ induces a splitting (as a graded space)
\begin{equation}\label{eq:CHPoisDefSplit}
CH_{Pois_n}^{*}(A,A) \, \cong \, A \oplus Hom_{\Sigma}({Pois_n}^* \{n\}, End_A)[-n]
\end{equation}
and also gives rise to 
the \emph{truncated} deformation complex
\begin{equation}\label{eq:CHPoisDefTruncated} CH_{Pois_n}^{(\bullet>0)}(A,A) =  Hom_{\Sigma}({Pois_n}^* \{n\}, End_A)[-n] 
\end{equation}
obtained by deleting the \lq\lq{}unit part\rq\rq{} $A$,
which is more relevant to deformations of $Pois_n$-algebras\footnote{as opposed to deformation of categories of modules}, 
see Lemma~\ref{L:gPoiss=Tam}. Note that both complexes are naturally bigraded with respect to the internal grading of $A$ 
and the \lq\lq{}operadic\rq\rq{} grading coming from $u {Pois_n}^*$.
The notation $CH_{Pois_n}^{(\bullet>0)}(A,A)$ is there to suggest that we are taking the 
subcomplex with positive weight with respect to the operadic grading.

\smallskip

The suspensions $CH_{Pois_n}^{*}(A,A) [n]$ and $CH_{Pois_n}^{(\bullet>0)}(A,A)[n] $ have \emph{canonical 
$L_\infty$-structures} since they are convolution algebras, and $CH_{Pois_n}^{(\bullet>0)}(A,A)[n]$ is canonically a sub $L_{\infty}$-algebra of $CH_{Pois_n}^{*}(A,A)[n]$.
Tamarkin~\cite{Ta-deformationofd-algebra}
(see also~\cite{Ko1, CaWi}) proved that \emph{the complex $CH_{Pois_n}^{*}(A,A)$ actually inherits a (homotopy) $Pois_{n+1}$-algebra 
structure} lifting this $L_{\infty}$-structure. 
Further, by~\eqref{eq:CHPoisDefSplit} we have an exact sequence of cochain complexes
\begin{equation}
 \label{eq:fibersequencePoisn}
 0\longrightarrow CH_{Pois_n}^{(\bullet>0)}(A,A)  \longrightarrow CH_{Pois_n}^{*}(A,A) \longrightarrow A \longrightarrow 0
\end{equation}
which yields after suspending the exact triangle
\begin{equation}
 \label{eq:fibersequencePoisnasLie}
  A[n-1] \stackrel{\partial_{Pois_n}[n-1]}\longrightarrow CH_{Pois_n}^{(\bullet>0)}(A,A)[n]  \longrightarrow CH_{Pois_n}^{*}(A,A)[n]. 
\end{equation}
\begin{rem}\label{R: hofibPois}
The map $\partial_{Pois_n}: A \subset CH_{Pois_n}^{*}(A,A)\to CH_{Pois_n}^{(\bullet>0)}(A,A)$ is the part of the 
differential in the cochain complex
$CH_{Pois_n}^{*}(A,A) =A \oplus CH_{Pois_n}^{(\bullet>0)}(A,A)$ which comes from the operadic structure. 
That is $\partial_{Pois_n}(x) \in Hom(A,A)$ is the map $a\mapsto \pm [x,a]$ where the bracket is the bracket of the $Pois_n$-algebra.
The Jacobi identity for the Lie algebra $A[n-1]$ implies that the sequence~\eqref{eq:fibersequencePoisnasLie} is a sequence of
$L_\infty$-algebras.
\end{rem}
\begin{rem}
 The operad $Pois_n$ is denoted $e_n$ in \cite{CaWi, Ta-deformationofd-algebra} and the complex
 $CH_{Pois_n}^{*}(A,A)$ is simply denoted $def(A)$ in Tamarkin~\cite{Ta-deformationofd-algebra}. We prefer to use the notations we have introduced by analogy with (operadic) Hochschild complexes.
\end{rem}
The next lemma compares the $L_\infty$-algebra structure of the truncated $Pois_n$ Hochschild complex and the 
one associated to the derived prestack group of homotopy automorphisms of a $Pois_n$-algebra:
\begin{lem}\label{L:gPoiss=Tam}
Let $A$ be a dg $Pois_n$-algebra with structure map $\varphi:Pois_n\rightarrow End_A$. There is a quasi-isomorphism dg Lie algebras
$$ g_{Pois_n^+,A}^{\varphi^+} \simeq CH_{Pois_n}^{(\bullet>0)}(A,A) $$
where the right hand side is the truncated cochain complex of a $Pois_n$-algebra defined by Tamarkin as above.
\end{lem}

\begin{proof}
The situation is completely similar to Example \ref{ex:strictassocalgebra}.
Here the resolution of $Pois_n$ we consider is the minimal model of of $Pois_n$ given by
\[
Pois_{n\infty}=\mathcal{B}^{co}(Pois_n^*\{n\})=(\mathcal{F}(\overline{Pois_n^*\{n+1\}}),\partial)
\]
where $(-)^*$ is the linear dual, $\{n\}$ is the operadic $n$-iterated suspension, $\mathcal{B}^{co}$ is the operadic cobar construction and $\overline{-}$ is the coaugmentation ideal of a coaugmented cooperad.
The result follows from the Koszul case in Theorem \ref{T:twistedsemidirprod}, with the twist defined by Lemma \ref{L:LiesemidirectMCtwist}. This result implies indeed that
\begin{eqnarray}
g_{Pois_n^+,A}^{\varphi^+} & \simeq &  g_{P,A}^{\varphi}\rtimes_{\xi} Hom(A,A)\\
& = & Hom_{\Sigma}(\overline{Pois_n^*\{n\}}\oplus I,End_A)^{\varphi} \\
 & = & Conv(Pois_n^*\{n\},End_A)
\end{eqnarray}
where $Conv(Pois_n^*\{n\},End_A)$ is the convolution Lie algebra of \cite[Section 2.2]{CaWi}.
In particular the last equality is an equality of dg Lie algebras with the convolution bracket defined by the action of the infinitesimal cooperadic coproduct on both sides.
\end{proof}

Lemma~\ref{L:gPoiss=Tam} together with Theorem~\ref{T:Def+=hAut} implies that
\begin{cor}\label{C:Tamarkin=Defo}
The truncated Tamarkin deformation complex $CH_{Pois_n}^{(\bullet>0)}(A,A)$ controls deformations
of $A$ into the $\infty$-category of dg $Pois_n$-algebras, in other words is the tangent Lie algebra of the derived prestack group $\underline{haut}_{Pois_{n\infty}}(A)$, where $Pois_{n\infty}$ is a cofibrant resolution of $Pois_n$. 
\end{cor}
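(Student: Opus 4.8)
The plan is to deduce this statement directly by combining the two immediately preceding results, Theorem~\ref{T:Def+=hAut} and Lemma~\ref{L:gPoiss=Tam}, so that no genuinely new argument is required. First I would observe that although Theorem~\ref{T:Def+=hAut} is phrased for a properad $P$ with cofibrant resolution $P_{\infty}$, the operad $Pois_n$ may be viewed as a properad via the free functor from operads to props (as recalled after the definition of algebras over a prop), and its $\infty$-category of algebras is unchanged under this identification; hence the theorem applies verbatim to $P=Pois_n$, $X=A$, and $P_{\infty}=Pois_{n\infty}$, with $\psi:Pois_n\rightarrow End_A$ the given structure morphism. This yields the chain of $L_{\infty}$-equivalences
\[
g_{Pois_n^+,A}^{\psi^+}\;\simeq\; g_{Pois_n,A}^{\psi}\ltimes_f End(A)\;\simeq\; Lie\big(\underline{haut}_{Pois_{n\infty}}(A,\psi)\big).
\]

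Next I would invoke Lemma~\ref{L:gPoiss=Tam}, which provides the equality of dg Lie algebras $g_{Pois_n^+,A}^{\psi^+}=CH_{Pois_n}^{(\bullet>0)}(A,A)$, where the right-hand side is Tamarkin's truncated deformation complex. Splicing this equality into the equivalence above gives
\[
CH_{Pois_n}^{(\bullet>0)}(A,A)\;\simeq\; Lie\big(\underline{haut}_{Pois_{n\infty}}(A,\psi)\big),
\]
which is precisely the assertion that the truncated Tamarkin complex is the tangent $L_{\infty}$-algebra of the derived prestack group $\underline{haut}_{Pois_{n\infty}}(A)$. The final interpretive step is to recall, from Definition~\ref{D:TgtofGpPreStack} together with the discussion of $\widehat{\underline{haut}_{P_{\infty}}(X,\psi)_{id}}$ and its associated formal moduli problem in Section~\ref{S:formalgeom} (in particular the identification of $\widehat{\underline{\mathcal{N}wP_{\infty}-Alg}}_{(X,\psi)}$ with the space of $R$-deformations in $P_{\infty}-Alg[W_{qiso}^{-1}]$), that the tangent Lie algebra of this derived prestack group is exactly the object controlling deformations of $A$ up to quasi-isomorphisms in the $\infty$-category of dg $Pois_n$-algebras. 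Specializing $Pois_{n\infty}$ to be a cofibrant resolution of $Pois_n$ completes the identification.

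I do not expect a substantive obstacle at this stage, since the heavy lifting has already been carried out in Theorem~\ref{T:Def+=hAut} and in the convolution-algebra bookkeeping of Lemma~\ref{L:gPoiss=Tam}. The only points requiring care are the bookkeeping ones just noted: verifying that the operad-to-properad passage is compatible with the plus construction used on both sides (so that $g_{Pois_n^+,A}^{\psi^+}$ computed operadically agrees with the properadic $g_{P^+,X}^{\psi^+}$ appearing in Theorem~\ref{T:Def+=hAut}), and making explicit that ``tangent Lie algebra of $\underline{haut}_{Pois_{n\infty}}(A)$'' is synonymous with ``controls deformations in $Pois_{n,\infty}-Alg[W_{qiso}^{-1}]$.'' Both are matters of unwinding definitions rather than new mathematics, so the corollary follows formally.
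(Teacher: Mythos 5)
Your proposal is correct and follows exactly the route the paper takes: the paper's own proof is literally the one-line observation that Lemma~\ref{L:gPoiss=Tam} combined with Theorem~\ref{T:Def+=hAut} yields the corollary, which is the splice you perform. Your additional care about the operad-to-properad passage and about what ``controls deformations'' means is a reasonable (if not strictly necessary) elaboration of the same argument rather than a different approach.
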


\begin{rem}\label{R:VariousDefPoisn}
The proof of Lemma~\ref{L:gPoiss=Tam} also shows that the deformation complex 
$g_{Pois_n,A}^{\varphi}$ of the formal moduli problem $\underline{{Pois_{n}}_{\infty} \{A \}}^{\varphi}$ is given by the $L_\infty$-algebra 
$CH_{Pois_n}^{(\bullet>1)}(A,A)[n]$, which is the kernel  
\begin{equation}\label{eq:DDefPoisnPas+} CH_{Pois_n}^{(\bullet>1)}(A,A)[n] := \ker\Big(CH_{Pois_n}^{(\bullet>0)}(A,A)[n]\twoheadrightarrow Hom (A, A)[n]\Big) 
\end{equation}
and is thus a even further truncation of $CH_{Pois_n}^*(A,A)$.
The situation is thus similar to what happens in deformation theory of associative algebras.
\end{rem}

One can also wonder which deformation problem is controled by the full complex $CH_{Pois_n}^*(A,A)$. In  view of our results and classical results on deformation theory of $E_n$-algebras (\cite{KellerLowen, Preygel, Fra}), we make the following

\begin{conj}
Let $n\geq 2$ and let $A$ be an $n$-Poisson algebra. The $L_{\infty}$-algebra structure
of the full shifted Poisson complex $CH_{Pois_n}^*(A)[n]$ controls the deformations of $Mod_A$ into  $E_{n-1}$-monoidal dg categories.
\end{conj}

Here, when $n\leq 1$, some  shift is needed on the linear enrichment of the $E_{|n-1|}$-monoidal dg-category,  according to the red shift trick~\cite{Toen-ICM, Toe}.

This conjecture is deeply related to the deformation theory of shifted Poisson structures in derived algebraic geometry, in the sense of \cite{CPTVV}. Precisely, if $X$ is a derived Artin stack locally of finite presentation and equipped with an $n$-shifted Poisson structure, then its sheaf of principal parts (which controls the local deformation theory on $X$ and whose modules describe the quasi-coherent complexes over $X$) forms a sheaf of mixed graded $Pois_{n+1}$-algebras. The deformation theory of the category of quasi-coherent complexes should then be controled by a full shifted Poisson complex. 

\subsection{Bialgebras}

Let us conclude our series of examples with one of properadic nature. Here we are interested in associative and coassociative bialgebras, and refer the reader to Example~\ref{E:asscoass} for a precise definition as well as the construction of the corresponding properad $Bialg$.

What we call the \emph{Gerstenhaber-Schack complex} is the total complex of a bicomplex, defined by
\begin{equation}\label{eq:DefCGS}
C_{GS}^*(B,B)\cong \prod_{m,n\geq 1}Hom_{dg}(B^{\otimes m},B^{\otimes n})[2-m-n].
\end{equation}
The horizontal differential is defined, for every $n$, by the Hochschild differential associated to the Hochschild complex of $B$ seen as an associative algebra with coefficients in the $B$-bimodule $B^{\otimes n}$.
The vertical differential is defined, for every $m$, by the co-Hochschild differential associated to the co-Hochschild complex of $B$ seen as a coassociative coalgebra with coefficients in the $B$-bicomodule $B^{\otimes m}$.
The compatibility between these differentials, which gives us a well defined bicomplex,
follows from the distributive law relating the product and the coproduct of the bialgebra $B$ (see~\cite{GS, Mer2} for details).

The properadic deformation complex of a properad morphism $\varphi:Bialg_{\infty}\rightarrow End_B$ gives
\[
g_{Bialg_{\infty},B}^{\varphi}\cong \prod_{m,n\geq 1,m+n\geq 3}Hom_{dg}(B^{\otimes m},B^{\otimes n})[2-m-n].
\]
It is an $L_{\infty}$-algebra with differential also given by the Gerstenhaber-Schack differential, but without the term $Hom_{dg}(B,B)$.

The existence of an $L_{\infty}$-algebra structure on both complexes follows from their interpretation as properadic deformation complexes, one using $Bialg_{\infty}$ \cite[Theorem 25]{MV2}, the other using $Bialg_{\infty}^+$ (as explained in \cite{Mer2}).

Combining Theorem~\ref{T:Def+=hAut} with the computation of \cite{Mer2}, we get:
\begin{thm}\label{T:GSIdentific}
The Gerstenhaber-Schack $L_{\infty}$-algebra is quasi-isomorphic, as an $L_{\infty}$-algebra, to the $L_{\infty}$-algebra controlling the deformations of dg bialgebras up to quasi-isomorphisms:
$$C_{GS}^*(B,B) \cong  g_{Bialg_{\infty}^+,B}^{\varphi^+} \simeq Lie(\underline{haut}_{Bialg_{\infty}}(B))$$.
\end{thm}
Hence the Gerstenhaber-Schack complex is indeed the $L_{\infty}$-algebra controling the derived deformation theory of dg bialgebras in a precise meaning, something new since the introduction of this complex by Gerstenhaber and Schack in their seminal paper \cite{GS}.
Moreover, as emphasized by the results of \cite{Mer2} and \cite{GY}, this complex plays a crucial role in deformation quantization.

\section{Concluding remarks and perspectives} \label{S:Concluding}

To conclude, let us give an overview of the various deformation complexes considered in the litterature and their \emph{derived} and \emph{underived} formal moduli problems.

\subsection{Algebras over operads in vector spaces}
\label{SS:DefOperad}
Let $X$ be a vector space and $P$ an operad with Koszul dual $C$. Then the cohomological grading on the convolution Lie algebra $g_{P,X}=Hom_{\Sigma}(C,End_X)$ is entirely determined by the ``weight grading'' of operations in the cooperad $C$. In particular, in the case where $X$ is of finite dimension $n$, this means that the degree $0$ Lie subalgebra of $g_{P,X}$ is nothing but $gl(X)$, whose associated Lie group is the general Lie group $GL(X)$. This is the gauge group acting on the Maurer-Cartan elements of $g_{P,X}$, so that the moduli set of Maurer-Cartan elements is
\[
\mathcal{MC}(g_{P,A})=Mor(P,End_A)/GL(A).
\]
The deformation complex $g_{P,X}^{\varphi}$ of a $P$-algebra $A=(X,\varphi)$ controls then the deformations of $A$ \emph{as a $P$-algebra}, up to \emph{linear automorphisms of $A$}.
If we replace $C$ by $\overline{C}$ in the definition of $g_{P,X}$, which is what we did in the present paper, then there is no non trivial gauge group acting anymore, and the Maurer-Cartan moduli set is just
\[
\mathcal{MC}(g_{P,A})=Mor(P,End_A).
\]
The corresponding \emph{underived} formal moduli problem, or classical deformation functor, controls the deformations of $A$ as a $P$-algebra in the category of vector spaces, up to isomorphisms.

In the derived setting, one replaces Artinian algebras by their dg enhancement, so that the simple description above in terms of gauge group action does not exist anymore (notice that, although $V$ is in degree $0$, we have to consider $haut(V\otimes A)$ for any \emph{differential graded} local Artinian algebra $A$, and $V\otimes A$ is not in degree zero anymore). The relevant theory, described in Section~\ref{S:Plus}, defines the appropriate deformation problems as loops over the homotopy quotient of the moduli space of $P$-algebra structures by a homotopy automorphisms group. Moreover, it turns out, as we explained in Remark \ref{R:VectStack}, that the corresponding derived formal moduli problem is given by the formal completion $\widehat{\underline{\mathcal{N}wP-Alg}}_A$ at $A$ of the $n$-geometric derived Artin stack of $n$-dimensional $P$-algebras.

To clarify the link between our \emph{derived} construction and the \emph{underived} deformation functor described above, let us restrict our derived moduli problem to local Artinian algebras. In this context, provided that $P$ is an operad in vector spaces, the simplicial presheaves $\underline{P_{\infty}\{X\}}$ and $\underline{haut}(X,\varphi)$ are actually discrete. Indeed, given a local Artinian algebra $R$, each vertex of the Kan complex $P_{\infty}\{X\otimes m_R\}$ factors uniquely through the composite
\[
P_{\infty}\twoheadrightarrow P\rightarrow End_{V\otimes m_R}
\]
because of degree reasons, commutation with the differentials and the fact that $End_{X\otimes m_R}$ is concentrated in degree zero. Moreover, for the same reasons, a homotopy between two such maps cannot be anything else than the identity, so that finally
\[
P_{\infty}\{X\otimes m_R\}\cong Mor_{prop}(P, End_{X\otimes m_R}) = MC(g_{P,X})
\]
and the corresponding pointed functor over $(X,\varphi)$ is
\[
P_{\infty}\{X\otimes m_R\}^{\varphi} \cong MC(g_{P,X}^{\varphi}).
\]
Note that this is coherent with the fact that left homotopies between such maps are in bijection with $\infty$-isotopies of $(X,\varphi)$, which boils down to $Id_{(X,\varphi)}$ when $X$ is in degree zero.
Also for degree reasons, the simplicial presheaf $\underline{haut}(X,\varphi)$ is equivalent to the discrete presheaf defined, for each local Artinian algebra $R$, by the strict automorphism group $Aut(X\otimes m_R)$, that is, the algebraic group of automorphisms of $(X,\varphi)$. The homotopy action of $\underline{haut}(X,\varphi)$ on $\underline{P_{\infty}\{X\}}$ is then nothing but the gauge action described above, so that the semi-direct product $g_{P^+,X}^{\varphi^+}\simeq \simeq g_{P,X}^{\varphi}\rtimes End(X)$ becomes the dg convolution Lie algebra considered in \cite[Section 12.2.22]{LV}.

\subsection{Differential graded algebras over operads}\label{SS:DefOperad2}

A differential graded structure on the object $X$ carries non trivial homotopies, and taking into account this new homotopy data that do not exist in the degree zero case involves deformations of $P$-algebra structures into $P$-algebra structures \emph{up to homotopy}, that is $P_{\infty}$-algebra structures, and therefore taking into account the non trivial homotopy type of the moduli space $P_{\infty}\{X\}$. Given a $P_{\infty}$-algebra $A=(X,\varphi)$, there are a priori three possible variants of derived deformation problems one could look at:
\begin{itemize}
\item[(1)] Deformation theory of the operad morphism $\varphi:P_{\infty}\rightarrow End_A$;

\item[(2)] Deformation theory of $A$ in the $\infty$-category of $P_{\infty}$-algebras \emph{up to $\infty$-isotopies};

\item[(3)] Deformation theory of $A$ in the $\infty$-category of $P_{\infty}$-algebras \emph{up to quasi-isomorphisms}.
\end{itemize}
Problem (1) is, as we saw before, controled by the derived formal moduli problem
\[
\underline{P_{\infty}\{X\}}(R) = hofib_{\varphi}(P_{\infty}\{X\otimes R\}\rightarrow P_{\infty}\{X\})
\]
whose associated $L_{\infty}$-algebra is $g_{P,X}^{\varphi}$ (constructed with $\overline{C}$). Problem (2) is the setting in which \cite[Section 12.2.22]{LV} takes place: an $R$-deformation of a $P$-algebra $A$ in the sense of (2) is a an $R$-linear $P_{\infty}$-algebra $\tilde{A}\simeq A\otimes R$ with a $\mathbb{K}$-linear $P_{\infty}$-algebra $\infty$-isomorphism $\tilde{A}\otimes_R\mathbb{K}\stackrel{\sim}{\rightarrow}A$, where $(-)\otimes_R\mathbb{K}$ is defined by the augmentation of $R$. Two deformations are equivalent if they are related by an $R$-linear $\infty$-isomorphism whose restriction modulo $m_R$ is the identity, that is $\infty$-isotopies. It turns out that, in the operadic case, problems (1) and (2) are equivalent: by \cite[Theorem 5.2.1]{Fre-cyl} homotopies between morphisms from $P_{\infty}$ to $End_X$ are in bijection with $\infty$-isotopies between the corresponding $P_{\infty}$-algebras, and by \cite[Section 12.2.22]{LV} the later are also controled by the convolution $L_{\infty}$-algebra. Here the gauge group of the deformation complex (for this moduli problem) of a $P_{\infty}$-algebra $A$ is isomorphic to the group of $\infty$-isotopies of $A$.

We spent some time in this article dealing with Problem (3), which had previously no known construction in the framework of derived deformation theory. As explained before, an $R$-deformation of a $P$-algebra $A$ in the sense of (3) is a an $R$-linear $P_{\infty}$-algebra $\tilde{A}\simeq A\otimes R$ with a $\mathbb{K}$-linear $P_{\infty}$-algebra quasi-isomorphism $\tilde{A}\otimes_R\mathbb{K}\stackrel{\sim}{\rightarrow}A$, where $(-)\otimes_R\mathbb{K}$ is defined by the augmentation of $R$.
We built a \emph{derived formal group} $\widehat{\underline{haut}_{P_{\infty}}(A)}_{id}$ whose corresponding $L_{\infty}$-algebra admits two equivalent descriptions
\[
Lie(\widehat{\underline{haut}_{P_{\infty}}(A)}_{id})\simeq g_{P,X}^{\varphi}\rtimes_{\xi} End(X)\simeq g_{P^+,X}^{\varphi^+}
\]
where the middle one exhibits this moduli problem as originating from the homotopy quotient of the space of $P_{\infty}$-algebra structures on $X$ by the homotopy action of self-quasi-isomorphisms $haut(X)$, and the right one explains how one can encode this explicitely as simultaneous compatible deformations of the $P_{\infty}$-algebra structure \emph{and} the differential of $X$.

Another way to compare deformation problems (2) and (3) is to recall that there are equivalences of $\infty$-categories
\[
P_{\infty}-Alg[W_{qiso}^{-1}]\simeq P-Alg[W_{qiso}^{-1}]\simeq \infty-P_{\infty}-Alg[W_{\infty-qiso}^{-1}]
\]
where the first equivalence is induced by the operadic quasi-isomorphism $P_{\infty}\overset{\sim}{\rightarrow}P$, and the second equivalence is induced by the strictification theorem of \cite[Chapter 12]{LV}, the later $\infty$-category being the one of $P_{\infty}$-algebras with $\infty$-morphisms, and with $\infty$-quasi-isomorphisms as weak equivalences. Problem (3) concerns deformation theory in the $\infty$-category of $P_{\infty}$-algebras \emph{up to $\infty$-quasi-isomorphisms}, hence is a relaxed version of Problem (2) in this sense.

\subsection{Dg coalgebras over an operad}

For coalgebras over an operad $P$, one can make sense of the three aforementioned derived deformation problems as follows:
\begin{itemize}
\item[(1)] A $P$-coalgebra structure on a complex $X$ is the datum of an operad morphism $P\rightarrow coEnd_X$ where $coEnd_X$ is the ``operad of coendomorphisms'' of $X$, given in each arity $n$ by $Hom(X,X^{\otimes n})$. One then considers the deformations of this operad morphism as before.

\item[(2)] The $\infty$-morphisms of $P$-coalgebras are well defined via the cobar functor as constructed in \cite[Section 9.1]{LGL} and behaves like $\infty$-morphisms of algebras, in particular it makes sense to consider deformations of a $P$-coalgebra with respect to $\infty$-isotopies.

\item[(3)]One works in the $\infty$-categorical localization of $P$-coalgebras with respect to quasi-isomorphisms, which admits an explicit description as the localization of a model category, and consider the same problem as before.
\end{itemize}
Derived deformation problems (1) and (2) are still equivalent, and the methods and results of this paper still apply to (3).
\begin{rem}
For conilpotent coalgebras over a cooperad $C$, the situation turns out to be a bit different. First, problem (1) cannot make sense anymore because a general operad or cooperad morphism does not impose conilpotency assumptions on the corresponding coalgebra. Second, for problems (2) and (3) one may want to specify (typically in the situation where $C$ is the Koszul dual cooperad of an operad $P$) whether we consider the $\infty$-categorical localization of conilpotent $C$-coalgebras with respect to quasi-isomorphisms or with respect to $\mathcal{B}^{co}$-equivalences (morphisms whose image by the cobar functor $\mathcal{B}^{co}$ is a quasi-isomorphism of $P$-algebras). In the first case, one would have to analyze how to implement conilpotency assumptions in the methods developed in this paper.
In the second case however, one applies Theorem \ref{T:equivhautLie} to the bar-cobar equivalence of $\infty$-categories between conilpotent $C$-coalgebras and $P$-algebras (actually presented by a Quillen equivalence of model categories) to go back to the algebra case.
\end{rem}

\subsection{Algebras over properads}

There is no well defined (homotopy invariant) notion of $\infty$-morphism of algebras over properads at present, though recent progresses have been made in~\cite{HLV}. So problem (2) does not make sense anymore in this more general setting. However, as we proved in the previous sections, problems (1) and (3) can be properly formalized and explicitely described by means of homotopy theory and derived algebraic geometry methods. One of the main additional difficulties when passing from operads to properads is the absence of model category structure on the corresponding kinds of algebras, which makes the situation more subtle to deal with both from the viewpoints of $\infty$-category theory and derived algebraic geometry.

\section{Appendix: recollections on props, homotopical algebra and $\infty$-categories}

The goal of this appendix is to briefly review several key notions and results from model categories and props that will be used in the present paper, as well as their homotopical algebra and associated $\infty$-categories.   

\subsection{Symmetric monoidal categories over a base category}

Symmetric monoidal categories over a base category formalize how a given symmetric monoidal category can be tensored and enriched over another category, in a way compatible with the monoidal structure:
\begin{defn}
Let $\mathcal{C}$ be a symmetric monoidal category. A symmetric monoidal category over $\mathcal{C}$ is a symmetric monoidal category $(\mathcal{E},\otimes_{\mathcal{E}},1_{\mathcal{E}})$ endowed with a symmetric monoidal functor $\eta:\mathcal{C}\rightarrow\mathcal{E}$, that is, an object under $\mathcal{C}$ in the $2$-category of symmetric monoidal categories.

This defines on $\mathcal{E}$ an external tensor product $\otimes :\mathcal{C}\times\mathcal{E}\rightarrow\mathcal{E}$
by $C\otimes X = \eta(C)\otimes_{\mathcal{E}} X$ for every $C\in\mathcal{C}$ and $X\in\mathcal{E}$.
This external tensor product is equipped with the following natural unit, associativity and symmetry isomorphisms:

(1) $\forall X\in\mathcal{E},1_{\mathcal{C}}\otimes X\cong X$,

(2) $\forall X\in\mathcal{E},\forall C,D\in\mathcal{C},(C\otimes D)\otimes X\cong C\otimes (D\otimes X)$,

(3) $\forall C\in\mathcal{C},\forall X,Y\in\mathcal{E},C\otimes (X\otimes Y)\cong(C\otimes X)\otimes Y\cong X\otimes(C\otimes Y)$.

The coherence constraints of these natural isomorphisms (associativity pentagons, symmetry hexagons and unit triangles which mix both internal and external tensor products) come from the symmetric monoidal structure of the functor $\eta$.

We will implicitly assume throughout the paper that all small limits and small
colimits exist in $\mathcal{C}$ and $\mathcal{E}$, and that each of these categories admit an internal hom bifunctor.
We suppose moreover the existence of an external hom bifunctor $Hom_{\mathcal{E}}(-,-):\mathcal{E}^{op}\times\mathcal{E}\rightarrow\mathcal{C}$ satisfying an adjunction relation
\[
\forall C\in\mathcal{C},\forall X,Y\in\mathcal{E},Mor_{\mathcal{E}}(C\otimes X,Y)\cong Mor_{\mathcal{C}}(C,Hom_{\mathcal{E}}(X,Y))
\]
(so $\mathcal{E}$ is naturally an enriched category over $\mathcal{C}$).
\end{defn}
Throughout this paper we will deal with symmetric monoidal categories equipped with a model structure. We assume that the reader is familiar with the basics of model categories. We refer to to Hirschhorn \cite{Hir} and Hovey \cite{Hov} for a comprehensive treatment of homotopical algebra. We just recall the axioms of symmetric monoidal model categories formalizing the interplay between the tensor and the model structures (in a word, these conditions ensure that the tensor product forms a Quillen bifunctor). From the point of view of $\infty$-categories, if a model category is equipped with a compatible symmetric monoidal structure (that is, satisfying the conditions below), then its associated $\infty$-category is symmetric monoidal as well (as an $\infty$-category).
\begin{defn}
(1) A symmetric monoidal model category is a symmetric monoidal category
$\mathcal{C}$ equipped with a model category structure such that
the following axioms holds:

\textbf{MM0.} For any cofibrant object $X$ of $\mathcal{C}$, the map $Q1_{\mathcal{C}}\otimes X\rightarrow 1_{\mathcal{C}}\otimes X\cong X$ induced by a cofibrant resolution $Q1_{\mathcal{C}}\rightarrow 1_{\mathcal{C}}$ of the unit $1_{\mathcal{C}}$ is a weak equivalence.

\textbf{MM1.} The pushout-product $(i_{*},j_{*}):A\otimes D\oplus_{A\otimes C}B\otimes C\rightarrow B\otimes D$
of cofibrations $i:A\rightarrowtail B$ and $j:C\rightarrowtail D$ is a cofibration
which is also acyclic as soon as $i$ or $j$ is so.

(2) Suppose that $\mathcal{C}$ is a symmetric monoidal model category.
A symmetric monoidal category $\mathcal{E}$ over $\mathcal{C}$ is
a symmetric monoidal model category over $\mathcal{C}$ if the axiom MM1 holds for both the internal and external tensor
products of $\mathcal{E}$.
\end{defn}
\begin{example}
The usual projective model category $Ch_{\mathbb{K}}$ of unbounded chain complexes over
a field $\mathbb{K}$ forms a symmetric monoidal model category.
\end{example}
A useful property of the pushout-product axiom MM1  is that it is equivalent to the following standard dual version:
\begin{lem}(cf. \cite[Lemma 4.2.2]{Hov})
In a symmetric monoidal model category $\mathcal{C}$, the axiom MM1 is equivalent to the
following one:

\textbf{MM1'.} The morphism
\[
(i^{*},p_{*}):Hom_{\mathcal{C}}(B,X)\rightarrow Hom_{\mathcal{C}}(A,X)\times_{Hom_{\mathcal{C}}(A,Y)}Hom_{\mathcal{C}}(B,Y)
\]
induced by a cofibration $i:A\rightarrowtail B$ and a fibration $p:X\twoheadrightarrow Y$ is a fibration in $\mathcal{C}$ which
is also acyclic as soon as $i$ or $p$ is so.
\end{lem}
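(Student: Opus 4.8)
The plan is to deduce the equivalence of the two axioms from a single categorical fact: in a closed symmetric monoidal category the pushout-product construction and the pullback-hom construction are adjoint at the level of lifting problems. Throughout I write $i\square j$ for the pushout-product map $A\otimes D\oplus_{A\otimes C}B\otimes C\to B\otimes D$ attached to two maps $i\colon A\to B$ and $j\colon C\to D$, and $\langle i,p\rangle$ for the map $Hom_{\mathcal{C}}(B,X)\to Hom_{\mathcal{C}}(A,X)\times_{Hom_{\mathcal{C}}(A,Y)}Hom_{\mathcal{C}}(B,Y)$ attached to $i$ and to $p\colon X\to Y$. I write $f\pitchfork g$ to mean that $f$ has the left lifting property with respect to $g$.

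First I would isolate the purely categorical core of the argument: for any three morphisms $i$, $j$, $p$ of $\mathcal{C}$, the three lifting conditions
\[
(i\square j)\pitchfork p,\qquad j\pitchfork\langle i,p\rangle,\qquad i\pitchfork\langle j,p\rangle
\]
are equivalent. This is where the real content lies, and it is a direct consequence of the tensor-hom adjunction $Mor_{\mathcal{C}}(C\otimes X,Y)\cong Mor_{\mathcal{C}}(C,Hom_{\mathcal{C}}(X,Y))$ together with the symmetry of $\otimes$. Concretely, a lifting problem for $i\square j$ against $p$ is a commutative square whose corners are maps out of $A\otimes D$, $B\otimes C$, $B\otimes D$ and into $X$, $Y$; transposing each corner under the adjunction, and using that the pushout defining the source of $i\square j$ transposes to the pullback defining the target of $\langle i,p\rangle$, converts it into a lifting problem for $j$ against $\langle i,p\rangle$. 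This correspondence is a natural bijection of squares which carries diagonal fillers to diagonal fillers, so a solution exists on one side exactly when it exists on the other; reading the same computation through the symmetry isomorphism yields the third equivalence. This adjunction lemma is the main (and essentially the only) obstacle: everything afterwards is bookkeeping. I would also recall the standard lifting-property characterizations in a model category, namely that a map is a cofibration (resp. trivial cofibration) iff it lifts on the left against all trivial fibrations (resp. all fibrations), and dually that a map is a fibration (resp. trivial fibration) iff it is lifted on the right by all trivial cofibrations (resp. all cofibrations).

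Finally I would assemble MM1 $\Leftrightarrow$ MM1' from these two ingredients. For MM1 $\Rightarrow$ MM1', let $i$ be a cofibration and $p$ a fibration; to see that $\langle i,p\rangle$ is a fibration I must verify $j\pitchfork\langle i,p\rangle$ for every trivial cofibration $j$, which by the adjunction lemma is $(i\square j)\pitchfork p$, and this holds because $i\square j$ is a trivial cofibration by MM1 and $p$ is a fibration. The two acyclicity clauses, when $i$ or when $p$ is trivial, are handled in exactly the same way, now testing against all cofibrations $j$ instead of trivial cofibrations. The converse MM1' $\Rightarrow$ MM1 is the mirror image: given cofibrations $i,j$, to see that $i\square j$ is a cofibration I check $(i\square j)\pitchfork p$, equivalently $j\pitchfork\langle i,p\rangle$, for every trivial fibration $p$, using that MM1' makes $\langle i,p\rangle$ a trivial fibration against which the cofibration $j$ lifts; and the acyclicity of $i\square j$ when $i$ (resp. $j$) is trivial follows by testing against fibrations $p$ and using the equivalence $i\pitchfork\langle j,p\rangle$ (resp. $j\pitchfork\langle i,p\rangle$) together with the fact that MM1' makes $\langle j,p\rangle$ (resp. $\langle i,p\rangle$) a fibration. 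Since these cases exhaust the requirements of the two axioms, the equivalence follows.
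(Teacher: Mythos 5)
Your proof is correct and is essentially the standard argument of the cited reference (Hovey, Lemma 4.2.2), which the paper invokes in place of giving its own proof: the two-variable tensor--hom adjunction transposes lifting problems for $i\square j$ against $p$ into lifting problems for $j$ against $\langle i,p\rangle$, and the equivalence of MM1 and MM1' then follows from the lifting-property characterizations of the (trivial) cofibrations and fibrations. Nothing further is needed.
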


\subsection{Props, properads and their algebras}

Props generalize operads, so that algebras over props can be defined by operations with multiple outputs, contrary to operads which parametrize only operations with one single output. In particular, they are  adapted to the study of bialgebra-like structures. Properads are an intermediate object between operads and props, which are close enough to operads in the sense that they are defined, like operads, as monoids in a category of symmetric sequences (contrary to props), but are sufficient to encode many interesting bialgebra-like structures. One of the key feature of \emph{properads} is that, contrary to props, there is a good theory of bar-cobar constructions and Koszul duality for them,  allowing  to get explicit resolutions in deformation theory of algebraic structures. We detail some of these ideas below.

\subsubsection{Props and their algebras}

Let $\mathcal{C}$ be a symmetric monoidal category.
A $\Sigma$-biobject is a double sequence $\{M(m,n)\in\mathcal{C}\}_{(m,n)\in\mathbb{N}^2}$
where each $M(m,n)$ is equipped with a right action of $\Sigma_{m}$
and a left action of $\Sigma_{n}$ commuting with each other. We write $\mathcal{C}^{\mathbb{S}}$ for the category of $\Sigma$-biobjects in $\mathcal{C}$.
\begin{defn}
A prop is a $\Sigma$-biobject endowed with associative horizontal composition products
\[
\circ_{h}:P(m_1,n_1)\otimes P(m_2,n_2)\rightarrow P(m_1+m_2,n_1+n_2),
\]
associative vertical composition products
\[
\circ_{v}:P(k,n)\otimes P(m,k)\rightarrow P(m,n)
\]
and units $1\rightarrow P(n,n)$ which are neutral for $\circ_v$.
These products satisfy the exchange law
\[
(f_1\circ_h f_2)\circ_v(g_1\circ_h g_2) = (f_1\circ_v g_1)\circ_h(f_2\circ_v g_2)
\]
and are compatible with the actions of symmetric groups. The elements of $P(m,n)$ are said to be of arity $(m,n)$.

Morphisms of props are equivariant morphisms of collections compatible with the composition products.
\end{defn}
There is a functorial free prop construction $\mathcal{F}$ leading to an adjunction
\[
\mathcal{F}:\mathcal{C}^{\mathbb{S}}\rightleftarrows Prop:U
\]
where  $U$ is  the forgetful functor. As for operads, there is a notion of ideal in a prop, so that one can define a prop by generators and relations. This approach is particularly useful considering the definition of algebras over a prop:
\begin{defn}
(1) To any object $X$ of $\mathcal{C}$ we can associate an endomorphism prop $End_X$ defined by
\[
End_X(m,n)=Hom_{\mathcal{C}}(X^{\otimes m},X^{\otimes n}).
\]

(2) A $P$-algebra is an object $X\in\mathcal{C}$ equipped with a prop morphism $P\rightarrow End_X$.
\end{defn}
Operations of $P$ are sent to operations on tensor powers of $X$, and the compatibility of a prop morphism with composition products on both sides impose the relations that such operations satisfy. This means that given a presentation of a prop $P$ by generators and relations, the $P$-algebra structure on $X$ is determined by the images of these generators and their relations. Let us give some motivating examples related to our article:
\begin{example}\label{E:asscoass}
A differential graded associative and coassociative bialgebra is a triple $(B,\mu,\Delta)$ such that:
\begin{itemize}
\item[(i)] $(B,\mu)$ is a dg associative algebra;

\item[(ii)] $(B,\Delta)$ is a dg coassociative coalgebra;

\item[(iii)] the map $\Delta:B\rightarrow B\otimes B$ is a morphism of algebras and the map $\mu:B\otimes B\rightarrow B$ is a morphism of coalgebras. 
\end{itemize}
The prop $Bialg$ of associative-coassocative bialgebras is generated by two degree zero operations
, one generator of arity $(2,1)$ and one generator of arity $(1,2)$, which corresponds to the operations $\mu$ and $\Delta$ above wether one specifies a prop morphism $Bialg\rightarrow End_B$.
It is quotiented by the ideal generated the associativity relation, the coassociativity relation, and the compatibility relation describing the condition (iii) above.

In the unitary and counitary case, one adds a generator for the unit, a generator for the counit and the necessary compatibility relations with the product and the coproduct.
\end{example}
\begin{example}
Lie bialgebras originate from mathematical physics, in the study of integrable systems whose gauge groups are not only Lie
groups but Poisson-Lie groups, see the seminal work of Drinfeld \cite{Dri}. 

A differential graded Lie bialgebra is a triple $(\mathfrak{g},[,],\delta)$ such that:
\begin{itemize}
\item[(i)] $(\mathfrak{g},[,])$ is a dg Lie algebra;

\item[(ii)] $(\mathfrak{g},\delta)$ is a dg Lie coalgebra;

\item[(iii)] the cocycle relation : the coLie cobracket of a Lie bialgebra $g$ is a cocycle in the Chevalley-Eilenberg
complex $C^*_{CE}(g,\Lambda^2g)$, where $\Lambda^2g$ is equipped with the structure of $g$-module induced by the adjoint action.
\end{itemize}
The prop $BiLie$ encoding Lie bialgebras is generated by one generator of arity $(2,1)$ and one generator of arity $(1,2)$, both of degree zero, and with the signature action of $\Sigma_2$ (that is,
they are antisymmetric). It is quotiented by the ideal generated by the Jacobi relation, the co-Jacobi relation, and the cocycle relation.
\end{example}

We can also define a \emph{$P$-algebra in a symmetric monoidal category
over $\mathcal{C}$}:
\begin{defn}
Let $\mathcal{E}$ be a symmetric monoidal category over $\mathcal{C}$.

(1) The endomorphism prop of $X\in\mathcal{E}$ is given by $End_X(m,n)=Hom_{\mathcal{E}}(X^{\otimes m},X^{\otimes n})$
where $Hom_{\mathcal{E}}(-,-)$ is the external hom bifunctor of $\mathcal{E}$.

(2) Let $P$ be a prop in $\mathcal{C}$. A $P$-algebra in $\mathcal{E}$
is an object $X\in\mathcal{E}$ equipped with a prop morphism $P\rightarrow End_X$.
\end{defn}
This definition will be useful, for instance, in the case where $P$ is a dg prop (a prop in $Ch_{\mathbb{K}}$) but algebras over $P$ lie in a symmetric monoidal category over $Ch_{\mathbb{K}}$.

To conclude, props enjoy nice homotopical properties. Indeed, the category of $\Sigma$-biobjects $\mathcal{C}^{\mathbb{S}}$
is a diagram category over $\mathcal{C}$, so it inherits the usual projective model structure of diagrams, which can be transferred along the free-forgetful adjunction:
\begin{thm}
(cf. \cite[Theorem 5.5]{Frep}) The category of dg props $Prop$ equipped with the
classes of componentwise weak equivalences and componentwise fibrations forms a cofibrantly generated model category.
\end{thm}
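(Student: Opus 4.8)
The plan is to obtain the model structure on $Prop$ by \emph{transferring} the projective model structure of $\Sigma$-biobjects along the free-forgetful adjunction $F:\mathcal{C}^{\mathbb{S}}\rightleftarrows Prop:U$, via the standard cofibrantly generated transfer theorem (Kan's lemma; see for instance Hirschhorn or Crans). Since $\mathcal{C}^{\mathbb{S}}$ is a diagram category over the cofibrantly generated symmetric monoidal model category $Ch_{\mathbb{K}}$, it carries a projective model structure with generating cofibrations $I$ and generating acyclic cofibrations $J$ assembled arity-wise from those of $Ch_{\mathbb{K}}$. I would declare a morphism $f$ of props to be a weak equivalence (resp. a fibration) exactly when $U(f)$ is one in $\mathcal{C}^{\mathbb{S}}$, i.e. componentwise, and take $F(I)$ and $F(J)$ as the candidate generating (acyclic) cofibrations; the cofibrations are then forced as the maps with the left lifting property against acyclic fibrations.

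First I would settle the structural hypotheses of the transfer theorem. Small limits in $Prop$ are created by $U$ (computed componentwise in $\mathcal{C}^{\mathbb{S}}$), and small colimits exist because $U$ is monadic over $\mathcal{C}^{\mathbb{S}}$ with an accessible monad, so colimits are built from componentwise colimits and reflexive coequalizers. The key point here is that the free prop monad $UF$ is described as a colimit over graphs (networks) with \emph{finitely many} vertices and edges; consequently $U$ commutes with filtered colimits. This simultaneously guarantees cocompleteness and the smallness condition needed to run the small object argument on $F(I)$ and $F(J)$, so the factorization axioms will follow once the acyclicity condition below is in place.

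The transfer theorem then reduces the whole statement to the single \emph{acyclicity condition}: every relative $F(J)$-cell complex is sent by $U$ to a weak equivalence in $\mathcal{C}^{\mathbb{S}}$. This is the main obstacle, and I would treat it as follows. Given a generating acyclic cofibration $j:A\rightarrowtail B$ of $\Sigma$-biobjects and an arbitrary morphism $F(A)\to Q$ to a prop $Q$, I must show that the pushout $Q\to Q\sqcup_{F(A)}F(B)$ becomes a weak equivalence under $U$, and that this persists under transfinite composition. The strategy is to filter the underlying $\Sigma$-biobject of $Q\sqcup_{F(A)}F(B)$ by the number of ``new'' generators coming from $B$ that are grafted onto $Q$ through the horizontal and vertical composition structure encoded by graphs. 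Each successive layer of this filtration is identified with a $\Sigma$-(co)invariant of a tensor product of components of $Q$ with the relative cells $B/A$ indexed by the combinatorics of the grafting graphs.

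The decisive ingredient is the standing hypothesis that $\mathbb{K}$ has characteristic zero: the group algebras $\mathbb{K}[\Sigma_n]$ are then semisimple, so the relevant symmetric-group (co)invariants functors are exact and commute with taking homology. Because $j$ is an acyclic cofibration, the relative cells $B/A$ are acyclic, and exactness propagates this acyclicity through each filtration layer; hence every layer inclusion is a weak equivalence, and so is the transfinite composite. This establishes the acyclicity condition and completes the transfer. (As an alternative to the cell-complex analysis, one could instead supply Quillen's path-object argument by building a functorial path object in $Prop$, but the graph filtration above is the more transparent route here and is the one that makes the role of characteristic zero explicit.)
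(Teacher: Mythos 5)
Your proposal is correct and follows exactly the route the paper intends: the statement is quoted from Fresse's work, and the surrounding text explicitly says the model structure is obtained by transferring the projective structure on $\Sigma$-biobjects along the free--forgetful adjunction, which is precisely your argument (finitary free-prop monad for smallness, graph filtration of pushouts of free maps plus exactness of $\Sigma$-coinvariants in characteristic zero for the acyclicity condition). The only remark worth adding is that the path-object argument you mention in passing, via $P\otimes\Omega_1$, gives an even shorter verification of acyclicity in characteristic zero, since every prop is fibrant.
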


\subsubsection{Properads}

Composing operations of two $\Sigma$-biobjects $M$ and $N$ amounts to consider $2$-levelled directed graphs
(with no loops) with the first level indexed by operations of $M$ and the second level by operations of $N$.
Vertical composition by grafting and horizontal composition by concatenation allows one to define props as before.
The idea of properads is to mimick the construction of operads as monoids in $\Sigma$-objects,
by restricting the vertical composition product to connected graphs.
The unit for this connected composition product $\boxtimes_c$ is the $\Sigma$-biobject $I$ given by $I(1,1)=\mathbb{K}$ and $I(m,n)=0$ otherwise.
The category of $\Sigma$-biobjects then forms a symmetric monoidal category $(Ch_{\mathbb{K}}^{\mathbb{S}},\boxtimes_c,I)$.
\begin{defn}
A dg properad $(P,\mu,\eta)$ is a monoid in $(Ch_{\mathbb{K}}^{\mathbb{S}},\boxtimes_c,I)$,
where $\mu$ denotes the product and $\eta$ the unit.
It is augmented if there exists a morphism of properads $\epsilon:P\rightarrow I$.
In this case, there is a canonical isomorphism $P\cong I\oplus\overline{P}$
where $\overline{P}=ker(\epsilon)$ is called the augmentation ideal of $P$.

Morphisms of properads are morphisms of monoids in $(Ch_{\mathbb{K}}^{\mathbb{S}},\boxtimes_c,I)$.
\end{defn}
Properads have also their dual notion, namely coproperads:
\begin{defn}
A dg coproperad $(C,\Delta,\epsilon)$ is a comonoid in $(Ch_{\mathbb{K}}^{\mathbb{S}},\boxtimes_c,I)$.
\end{defn}
As in the prop case, there exists a free properad functor $\mathcal{F}$ forming an adjunction
\[
\mathcal{F}:Ch_{\mathbb{K}}^{\mathbb{S}}\rightleftarrows Properad :U
\]
with $U$ the forgetful functor \cite{Val}. Dually, there exists a conilpotent cofree coproperad functor denoted $\mathcal{F}_c(-)$ having the same underlying $\Sigma$-biobject \cite[Theorem 5.7.9]{LV}. This adjunction equips dg properads with a cofibrantly generated model category structure with componentwise fibrations and weak equivalences \cite{MV2}. The notion of algebra over a properad is similar to algebra over a prop since the endomorphism prop restricts to an endomorphism properad.
Moreover, properads also form a model category for the same reasons as props:
\begin{thm}(cf. \cite[Appendix A]{MV2}) The category of dg props $Prop$ equipped with the
classes of componentwise weak equivalences and componentwise fibrations forms a cofibrantly generated model category.
\end{thm}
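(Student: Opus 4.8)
The plan is to obtain the model structure on $Prop$ by \emph{transfer}: we carry the projective model structure on the category $\mathcal{C}^{\mathbb{S}}$ of $\Sigma$-biobjects across the free-forgetful adjunction $F:\mathcal{C}^{\mathbb{S}}\rightleftarrows Prop:U$ recalled above. First I would record that, $\mathcal{C}=Ch_{\mathbb{K}}$ being cofibrantly generated, the diagram category $\mathcal{C}^{\mathbb{S}}$ carries the componentwise (projective) model structure, which is again cofibrantly generated: its generating cofibrations $I$ (resp.\ generating acyclic cofibrations $J$) are the maps $G_{(m,n)}(i)$ obtained by applying, in each biarity $(m,n)$, the left adjoint $G_{(m,n)}$ of evaluation at $(m,n)$ to a generating cofibration (resp.\ acyclic cofibration) $i$ of $Ch_{\mathbb{K}}$. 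Declaring a morphism of props to be a weak equivalence (resp.\ fibration) exactly when $U$ of it is one in $\mathcal{C}^{\mathbb{S}}$, the candidate generating sets on the prop side are $F(I)$ and $F(J)$, and the whole proof reduces to verifying the hypotheses of the standard transfer criterion for cofibrantly generated model categories (cf.\ \cite{Frep}).

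\textbf{Bicompleteness and smallness.} Next I would dispatch the completeness and smallness conditions. Limits in $Prop$ are created by $U$ and hence computed componentwise in $\mathcal{C}^{\mathbb{S}}$. For colimits, the key point is that $Prop$ is the category of algebras over the accessible free-prop monad $T=UF$ on the locally presentable category $\mathcal{C}^{\mathbb{S}}$; it is therefore itself locally presentable and in particular cocomplete. Accessibility of $T$ also guarantees that the domains of $F(I)$ and $F(J)$ are small relative to the relevant cellular maps, so that the small object argument yields the required functorial factorizations and $U$ commutes with the transfinite compositions occurring in cell complexes.

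\textbf{The acyclicity condition (main obstacle).} The crux is to show that every relative $F(J)$-cell complex is a weak equivalence, i.e.\ componentwise acyclic; this is where $\mathrm{char}\,\mathbb{K}=0$ enters decisively. I would reduce, as usual, to a single pushout in $Prop$
\[
\xymatrix{
F(C)\ar[r]\ar[d] & A\ar[d]\\
F(D)\ar[r] & B
}
\]
along the image of an acyclic cofibration $C\rightarrowtail D$ in $\mathcal{C}^{\mathbb{S}}$, and then exhibit a functorial filtration $A=B_0\to B_1\to B_2\to\cdots$ of $B$ with $\mathrm{colim}_i B_i=B$, whose successive layers are assembled from graphs decorated by the operations of $A$ together with $i$ new vertices labeled by the quotient $D/C$. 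Because $C\rightarrowtail D$ is an acyclic cofibration, $D/C$ is acyclic, so each layer with at least one new vertex carries a tensor factor $(D/C)^{\otimes i}$ which is acyclic by Künneth over the field $\mathbb{K}$; and since the free prop is formed as a sum over isomorphism classes of directed graphs after passing to coinvariants for the graph automorphism and symmetric group actions, the exactness of $(-)_{\Sigma_k}$ in characteristic zero (Maschke's theorem, $\mathbb{K}[\Sigma_k]$ semisimple) keeps these layers acyclic. Hence each $B_{i-1}\to B_i$ is a componentwise quasi-isomorphism, and passing to the colimit shows $A\to B$ is a weak equivalence. The genuine difficulty, and the step on which I would spend the most care, is making this graph-wise filtration precise for \emph{props}: since prop composition involves both horizontal and vertical products, the relevant graphs are disconnected and of arbitrary genus, so the combinatorial bookkeeping of the filtration layers is markedly heavier than in the operadic or properadic setting, even though the underlying semisimplicity mechanism is identical.

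\textbf{Conclusion.} Granting the acyclicity condition together with the bicompleteness and smallness verifications, the transfer criterion applies and endows $Prop$ with a cofibrantly generated model structure whose generating cofibrations and generating acyclic cofibrations are $F(I)$ and $F(J)$, and whose weak equivalences and fibrations are precisely the componentwise ones, as claimed. I would close by noting that this is exactly the prop-level counterpart of the properad statement of \cite{MV2}, the two arguments differing only in the graph combinatorics underlying the free construction.
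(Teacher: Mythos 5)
Your proposal is correct in outline and matches the overall strategy of the sources the paper cites for this statement (the paper itself gives no proof, deferring to \cite[Appendix A]{MV2}, and earlier to \cite[Theorem 5.5]{Frep}): transfer the componentwise projective structure on $\Sigma$-biobjects along the free--forgetful adjunction $F\dashv U$, with generating sets $F(I)$, $F(J)$. Where you diverge is in the verification of the acyclicity condition, which is the only substantive step. Merkulov--Vallette do \emph{not} run the cellular filtration you sketch; they use the characteristic-zero shortcut in the style of Hinich/Berger--Moerdijk: every dg prop(erad) is fibrant (fibrations being componentwise surjections), and $P\mapsto P\otimes\Omega(\Delta^1)$, with $\Omega_\bullet$ the Sullivan cdga of polynomial forms recalled in the paper, gives a functorial path object since tensoring componentwise with a cdga preserves the prop(erad) structure. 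The path-object argument then yields the transfer with no graph combinatorics at all. Your filtration-plus-Maschke argument is instead the route taken by Fresse in \cite{Frep}, where the analysis of pushouts along $F(C)\rightarrowtail F(D)$ is carried out over general base categories; there it only produces a semi-model structure in general, and your correct observation that exactness of $\Sigma$-coinvariants over $\mathbb{K}[\Sigma_k]$ (char $0$) keeps the layers acyclic is precisely what upgrades it to a genuine model structure over a field of characteristic zero. So each approach buys something: yours is more robust (it is the one that generalizes beyond characteristic zero, at the cost of heavy bookkeeping for disconnected, arbitrary-genus graphs, as you note), while the cited path-object argument is far shorter but is genuinely tied to the existence of the cdga interval, hence to characteristic zero. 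One cosmetic point: this second occurrence of the theorem sits in the properads subsection and should read ``properads'' rather than ``props''; for properads your filtration would range over \emph{connected} graphs only, which lightens exactly the combinatorial burden you flag as the main obstacle.
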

Properads are general enough to encode a wide range of bialgebra structures such as associative and coassociative bialgebras, Lie bialgebras, Poisson bialgebras, Frobenius bialgebras for instance. 

\subsection{Algebras and coalgebras over operads}

Operads are used to parametrize various kind of algebraic structures consisting of operations with
one single output. Fundamental examples of operads include the operad $As$ encoding associative algebras,
the operad $Com$ of commutative algebras, the operad $Lie$ of Lie algebras and the operad
$Pois$ of Poisson algebras. Dg operads form a model category with bar-cobar resolutions and Koszul duality \cite{LV}.
An algebra $X$ over a dg operad  $P$ can be defined in any symmetric monoidal category $\mathcal{E}$ over $Ch_{\mathbb{K}}$, alternatively as an algebra over the corresponding monad $P(-):Ch_{\mathbb{K}}\rightarrow Ch_{\mathbb{K}}$, which forms the free $P$-algebra functor, or as an operad morphism $P\rightarrow End_X$ where $End_X(n)=Hom_{\mathcal{E}}(X^{\otimes n},X)$ and $Hom_{\mathcal{E}}$ is the external hom bifunctor.
\begin{rem}
There is a free functor from operads to props, so that algebras over an operad are exactly the algebras over the corresponding prop. Hence algebras over props include algebras over operads as particular cases.
\end{rem}

Dual to operads is the notion of cooperad, defined as a comonoid in the category of $\Sigma$-objects. A coalgebra over a cooperad is a coalgebra over the associated comonad.
We can go from operads to cooperads and vice-versa by dualization.
Indeed, if $C$ is a cooperad, then the $\Sigma$-module $P$ defined by $P(n)=C(n)^*=Hom_{\mathbb{K}}(C(n),\mathbb{K})$
form an operad. Conversely, suppose that $\mathbb{K}$ is of characteristic zero and $P$ is an operad such that 
each $P(n)$ is finite dimensional. Then the $P(n)^*$ form a cooperad in the sense of \cite{LV}.
We also give the definition of \emph{coalgebras over an operad}:
\begin{defn}
(1) Let $P$ be an operad. A $P$-coalgebra is a complex $C$ equiped with linear applications
$\rho_n:P(n)\otimes C \rightarrow C^{\otimes n}$ for every $n\geq0$. These maps are $\Sigma_n$-equivariant
and associative with respect to the operadic compositions.

(2) Each $p\in P(n)$ gives rise to a cooperation $p^*:C\rightarrow C^{\otimes n}$.
The coalgebra $C$ is usually said to be conilpotent if for each $c\in C$, there exists $N\in\mathbb{N}$
so that $p^*(c)=0$ when we have $p\in P(n)$ with $n>N$.
\end{defn}
If $\mathbb{K}$ is a field of characteristic zero and the $P(n)$ are finite dimensional, then
it is equivalent to define a $P$-coalgebra via a family of applications
$\overline{\rho}_n:C\rightarrow P(n)^*\otimes_{\Sigma_n} C^{\otimes n}$.

\subsection{Homotopy algebras}\label{SS:HomotopyAlg}

Given a prop, properad or operad $P$, a homotopy $P$-algebra, or $P$-algebra up to homotopy, is
an algebra for which the relations are relaxed up to a coherent system of higher homotopies. this is encoded by
\begin{defn}
A homotopy $P$-algebra is an algebra over a cofibrant resolution $P_{\infty}$ of $P$.
\end{defn}
To make this definition meaningful, one has to prove that the notion of homotopy $P$-algebra does not depend (up to homotopy) on a choice of resolution:
\begin{thm}(cf. \cite{Yal})\label{Thm:Yalbis}
A weak equivalence of cofibrant dg props $P_{\infty}\stackrel{\sim}{\rightarrow}Q_{\infty}$ induces an equivalence of the corresponding $\infty$-categories of algebras
\[
P_{\infty}-Alg[W_{qiso}^{-1}]\stackrel{\sim}{\rightarrow}Q_{\infty}-Alg[W_{qiso}^{-1}],
\]
where $P_{\infty}-Alg[W_{qiso}^{-1}]$ denotes the $\infty$-categorical localization of $P_{\infty}-Alg$ with respect to its subcategory of quasi-isomorphisms.
\end{thm}

\begin{rem} Properads have a well defined theory of bar-cobar constructions and Koszul duality \cite{Val}, which allows to produce explicit cofibrant resolutions of properads. The bar-cobar resolution is a functorial cofibrant resolution but of a rather big size, whereas the resolution obtained from the Koszul dual (when $P$ is Koszul) is not functorial but smaller and better suited for computations.

These resolutions are of the form $P_{\infty}=(\mathcal{F}(V),\partial)$ where $\partial$
is a differential obtained by summing the differential induced by the $\Sigma$-biobject $V$ with a certain derivation.
To sum up, for a (pr)operad, one can always choose a homotopy $P$-algebra to be an algebra over a \emph{quasi-free resolution of $P$}, in which the generators give the system of higher homotopies and the relations defining a strict $P$-algebra become coboundaries.
\end{rem}


\subsection{Homotopy theory of cdgas and their modules}\label{SS:CDGA}

Before getting to the heart of the subject, let us precise that, as usual in deformation theory and (derived) algebraic geometry, the cdgas that we consider here are unital. That is, we consider the category of unital commutative monoids in the symmetric monoidal model category $Ch_{\mathbb{K}}$ and denote it by $cdga_{\mathbb{K}}$. Such monoids enjoy many useful homotopical properties, as they form a \emph{homotopical algebra context} in the sense of \cite[Definition 1.0.1.11]{TV}. We will not list all the properties satisfied by cdgas, but here is a non-exhaustive one that will be useful in this article:
\begin{itemize}
\item[(1)] The category $cdga_{\mathbb{K}}$ forms a cofibrantly generated model category with fibrations and weak equivalences being the degreewise surjections and quasi-isomorphisms.

\item[(2)] Given a cdga $A$, its category of dg $A$-modules $Mod_A$ forms a cofibrantly generated symmetric monoidal model category. The model structure is, again, right induced by the forgetful functor, and the tensor product is given by $-\otimes_A-$. In particular, we have a Quillen adjunction
\[
(-)\otimes A:Ch_{\mathbb{K}}\leftrightarrows Mod_A:U
\]
with a strong monoidal left adjoint (hence lax monoidal right adjoint). The unit $\eta$ of this adjunction is defined, for any complex $X$, by 
\begin{eqnarray*}
\eta(X):X & \rightarrow & X\otimes A \\
x & \longmapsto & x\otimes 1_A
\end{eqnarray*}
where $1_A$ is the unit element of $A$ (the image of $1_{\mathbb{K}}$ by the unit map of $A$).

\item[(3)] Base changes are compatible with the homotopy theory of modules. Precisely, a morphism of cdgas $f:A\rightarrow B$ induces a Quillen adjunction
\[
f_!:Mod_A\leftrightarrows Mod_B:f^*
\]
where $f^*$ equip a $B$-module with the $A$-module structure induced by the morphism $f$ and $f_!=(-)\otimes_AB$.
Moreover, if $f$ is a quasi-isomorphism of cdgas then this adjunction becomes a Quillen equivalence.

\item[(4)] The category of augmented cdgas $cdga_{\mathbb{K}}^{aug}$ is the category under $\mathbb{K}$ associated to $cdga_{\mathbb{K}}$, so it forms also a cofibrantly generated model category. Moreover, this model category is \emph{pointed} with $\mathbb{K}$ as initial and terminal object, so that one can alternately call them \emph{pointed} cdgas. Let us note also that augmented unital cdgas are equivalent to non-unital cdgas $cdga_{\mathbb{K}}^{nu}$ via the Quillen equivalence
\[
(-)_+:cdga_{\mathbb{K}}^{nu}\leftrightarrows cdga_{\mathbb{K}}^{aug}:(-)_-
\]
where $A_+=A\oplus\mathbb{K}$ for $A\in cdga_{\mathbb{K}}^{nu}$ and $A_-$ is the kernel of the augmentation map of $A$ for $A\in cdga_{\mathbb{K}}^{aug}$.
\end{itemize}
\begin{example}
There is a simplicial cdga called the \emph{Sullivan cdga of polynomial forms on the standard simplices}. It is given by 
\begin{equation}
 \Omega_n:= Sym \left(\bigoplus_{i=0}^n\big(\mathbb{K} t_i \oplus \mathbb{K} dt_i\big)\right)_{\slash_{\left(\begin{array}{l} t_0+\cdots +t_n=1 \\ dt_0+\cdots+dt_n=0 
 \end{array} \right)}}
\end{equation}
which is  the algebra of piecewise linear forms on the standard simplex $\Delta^n$, the differential being  defined, on the generators,
by $d(t_i) =dt_i$. The simplicial structure is induced by the cosimplicial structure of 
$n\mapsto \Delta^n$, see~\cite{Sul} for details.

This cdga and its modules will be essential when considering formal moduli problems and simplicial resolutions in the core of the paper.
\end{example}
\medskip

For any cdga $A$, the category $Mod_A$ of left dg $A$-modules is a (cofibrantly generated) symmetric monoidal model category tensored over chain complexes.
Therefore 
 one can define the category $P_{\infty}-Alg(Mod_A)$ of $P_{\infty}$-algebras in $Mod_A$,
 for any cofibrant prop $P_\infty$ as in section~\ref{SS:HomotopyAlg} and Theorem~\ref{Thm:Yalbis} extends to this context. 
 
 \medskip 
 
 An important subcategory of augmented cdgas is the one of artinian algebras, which are the coaffine formal moduli problems.
 \begin{defn} \label{D:ArtininaCDGA} An augmented cdga $A$ is Artinian if 
 \begin{itemize}
  \item its cohomology groups $H^n(A)$ vanish for $n>0$ and for $n << 0$, and each of them is finite dimensional over $k$;
  \item the (commutative) ring $H^0(A)$ is artinian in the standard meaning of commutative algebra.
 \end{itemize}
We denote 
$ dgArt_{\mathbb{K}}^{aug}$ the full subcategory of $CDGA_{\mathbb{K}}^{aug}$ of Artinian cdgas.
 \end{defn}

\subsection{Relative categories versus $\infty$-categories}

There are many equivalent ways to model $\infty$-categories. Precisely, there are several Quillen equivalent models for $\infty$-categories we can choose to work with \cite{Ber2}, for instance quasi-categories \cite{Lur1}, complete Segal spaces \cite{Rez2}, simplicial categories \cite{Ber1}, or relative categories \cite{BK1,BK2}. 
In this paper, it will often be convenient to consider $\infty$-functors which are associated to \lq\lq{}naive\rq\rq{} functors, provided-of course-that they preserve weak equivalences. This is not necessarily posible to do that in a straightforward naive way depending on the model chosen for $\infty$-categories. Therefore, here, we choose to work in the homotopy theory of relative categories as developed recently by Barwick-Kan \cite{BK1,BK2}. This will allow us to define more easily  $\infty$-functors starting from classical constructions, instead of going through, for instance, the cartesian fibration/opfibration formalism of \cite{Lur1}. For the sake of clarity, we start by recalling the main features of this theory and refer to \cite{BK1,BK2} for more details. Then we state some technical lemmas that will help us to go from equivalences of relative categories to equivalences of $\infty$-categories.

\subsubsection{$\infty$-categories associated to relative categories or  model categories}\label{SS:inftyCat}
We now recall and compare various standard ways to construct $\infty$-categories.
\begin{defn}\label{D:RelCat}
A relative category is a pair of categories $(\mathcal{C},W_{\mathcal{C}})$ such that $W_{\mathcal{C}}$ is a subcategory of $\mathcal{C}$ containing all the objects of $\mathcal{C}$. We call $W_{\mathcal{C}}$ the category of weak equivalences of $C$. A relative functor between two relative categories $(\mathcal{C},W_{\mathcal{C}})$ and $(\mathcal{D},W_{\mathcal{D}})$ is a functor $F:\mathcal{C}\rightarrow\mathcal{D}$ such that $F(W_{\mathcal{C}})\subset W_{\mathcal{D}}$.
\end{defn}
We denote by $RelCat$ the category of relative categories and relative functors. By Theorem 6.1 of \cite{BK1}, there is an adjunction between
the category of bisimplicial sets and the category of relative categories
\[
K_{\xi}:sSets^{\Delta^{op}}\leftrightarrows RelCat:N_{\xi}
\]
(where $K_{\xi}$ is the left adjoint and $N_{\xi}$ the right adjoint)
which lifts any Bousfield localization of the Reedy model structure of bisimplicial sets into a model structure on $RelCat$.
In the particular case of the Bousfield localization defining the complete Segal spaces \cite{Rez2}, one obtains
a Quillen equivalent homotopy theory of the homotopy theories in $RelCat$ \cite{BK1}.
In particular, a morphism of relative categories is a weak equivalence if and only if its image under $N_{\xi}$ is a weak equivalence of complete Segal spaces. We refer the reader to Section 5.3 of \cite{BK1} for the definition of the functor $N_{\xi}$. Let us just mention that it is weakly equivalent to 
the classifying diagram functor $N$ defined in \cite{Rez2}, which is a key  tool to construct complete Segal spaces.

A simplicial category is a category enriched over simplicial sets. We denote by $SCat$ the category of simplicial categories.
There exists functorial cosimplicial resolutions and simplicial resolutions in any model category (\cite{DK3},\cite{Hir}),
so model categories provide examples of (weakly) simplicially enriched categories. One recovers the morphisms of the homotopy category from a cofibrant object to a fibrant object by taking the set of connected components of the corresponding
simplicial mapping space. Another more general example is the simplicial localization developed by Dwyer and Kan \cite{DK1}.
To any relative category Dwyer and Kan associates a simplicial category $L(\mathcal{C},W_{\mathcal{C}})$ called its simplicial localization. They developed also another simplicial localization, the hammock localization $L^H(\mathcal{C},W_{\mathcal{C}})$ \cite{DK2}.
By taking the sets of connected components of the mapping spaces, we get $\pi_0L(\mathcal{C},W_{\mathcal{C}})\cong \mathcal{C}[W_{\mathcal{C}}^{-1}]$ where $\mathcal{C}[W_{\mathcal{C}}^{-1}]$ is the localization of $\mathcal{C}$ with respect to $W_{\mathcal{C}}$ (i.e. the homotopy category of $(\mathcal{C},W_{\mathcal{C}})$). The simplicial and hammock localizations are equivalent in the following sense:
\begin{prop}(Dwyer-Kan \cite{DK2}, Proposition 2.2)
Let $(\mathcal{C},W_{\mathcal{C}})$ be a relative category.
There is a zigzag of Dwyer-Kan equivalences
\[
L^H(\mathcal{C},W_{\mathcal{C}})\leftarrow diag L^H(F_*\mathcal{C},F_*W_{\mathcal{C}})\rightarrow L(\mathcal{C},W_{\mathcal{C}})
\]
where $F_*\mathcal{C}$ is a simplicial category called the standard resolution of $\mathcal{C}$ (see \cite{DK1} Section 2.5).
\end{prop}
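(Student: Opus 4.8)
The plan is to reconstruct the Dwyer--Kan comparison argument. The essential simplification is that all three simplicial categories in the zigzag share the same object set $O=\mathrm{Ob}(\mathcal{C})$ and every functor involved is the identity on objects; so a Dwyer--Kan equivalence here amounts to a simplicial functor inducing a weak equivalence on each mapping space. I would build the two arrows from two natural gadgets: the augmentation $\epsilon\colon F_*\mathcal{C}\to\mathcal{C}$ of the standard resolution, and the canonical collapse $L^H(\mathcal{D},V)\to\mathcal{D}[V^{-1}]$ onto the Gabriel--Zisman localization (viewed as a discrete simplicial category), which realizes the identification $\pi_0 L^H(\mathcal{D},V)\cong\mathcal{D}[V^{-1}]$ of \cite{DK2}.

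First I would recall from \cite{DK1} that, writing $F$ for the comonad of the free--forgetful adjunction between categories and graphs with fixed vertex set $O$, the resolution $F_*\mathcal{C}$ is a simplicial object in categories whose every level $F_n\mathcal{C}$ is a \emph{free} category, and that $L(\mathcal{C},W)$ is by definition the degreewise localization $(F_*\mathcal{C})[F_*W^{-1}]$, a simplicial category. The crucial structural observation is that each pair $(F_n\mathcal{C},F_nW)$ is \emph{free} in a compatible way: $F_n\mathcal{C}$ is free on a graph and $F_nW$ is free on the subgraph spanned by the weak equivalences.

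For the right-hand arrow, applying the collapse map in each simplicial degree produces a map of simplicial objects $L^H(F_*\mathcal{C},F_*W)\to(F_*\mathcal{C})[F_*W^{-1}]$; passing to diagonals and invoking the definition of $L$ gives the arrow $\mathrm{diag}\,L^H(F_*\mathcal{C},F_*W)\to L(\mathcal{C},W)$. Since the diagonal of a levelwise weak equivalence of bisimplicial sets is a weak equivalence, it suffices to show that in each degree $L^H(F_n\mathcal{C},F_nW)\to(F_n\mathcal{C})[F_nW^{-1}]$ is an equivalence; this is the free-category computation, asserting that for a free pair the mapping spaces of $L^H$ are homotopically discrete, so that $L^H$ collapses onto its path components, which are precisely the morphisms of $(F_n\mathcal{C})[F_nW^{-1}]$. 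For the left-hand arrow, the augmentation $\epsilon$ induces $\mathrm{diag}\,L^H(F_*\mathcal{C},F_*W)\to L^H(\mathcal{C},W)$ (the target being constant in the resolution direction). Because a comonad resolution carries extra degeneracies, $\epsilon$ is a simplicial homotopy equivalence of relative categories; applying the functor $L^H$ carries the extra degeneracies to extra degeneracies of the resulting bisimplicial mapping spaces, so the augmentation remains a weak equivalence, and the diagonal of a weak equivalence of bisimplicial sets is again a weak equivalence. Concatenating the two arrows yields the asserted zigzag.

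The main obstacle is the free-category computation underlying the right-hand arrow: that $L^H(\mathcal{D},V)$ is homotopically discrete whenever $(\mathcal{D},V)$ is a free pair. The content is combinatorial. A simplex of a mapping space of $L^H$ is a reduced hammock, i.e. a coherent zigzag $X\leftarrow\cdots\to Y$ with backward legs in $V$ together with relations by maps of $V$ and by composition; in the free setting every such hammock admits a canonical reduction, obtained by composing adjacent strings of generators and cancelling backtracking along $V$, which represents a length-minimal zigzag and hence a morphism of $\mathcal{D}[V^{-1}]$. Upgrading this reduction from an identification on $\pi_0$ to a genuine \emph{contraction} of each connected component of the mapping space is the delicate point; I would effect it by producing an explicit extra degeneracy on the category of hammocks supplied by freeness, exactly along the lines of \cite{DK2}, so that each component deformation-retracts onto its reduced representative.
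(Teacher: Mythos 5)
First, a caveat: the paper does not prove this statement — it is recalled verbatim from Dwyer--Kan \cite{DK2}, Proposition 2.2 — so your proposal can only be measured against the original argument. Your overall architecture is the right one and matches Dwyer--Kan: the three simplicial categories share the object set of $\mathcal{C}$, $L(\mathcal{C},W_{\mathcal{C}})$ is by construction the degreewise Gabriel--Zisman localization $F_*\mathcal{C}[F_*W_{\mathcal{C}}^{-1}]$ of the standard free resolution, and the right-hand arrow is the diagonal of the levelwise collapse maps $L^H(F_n\mathcal{C},F_nW_{\mathcal{C}})\to F_n\mathcal{C}[F_nW_{\mathcal{C}}^{-1}]$, which are weak equivalences because the hammock localization of a \emph{free} pair has homotopically discrete mapping spaces. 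You correctly flag that this last point (upgrading the $\pi_0$-identification to a contraction of each component) is the combinatorial heart of the right-hand arrow, even if you leave it as a black box pointing back to \cite{DK2}.

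The genuine gap is in your treatment of the left-hand arrow. The extra degeneracies of the comonad resolution $F_*\mathcal{C}\to\mathcal{C}$ are induced by the unit of the free--forgetful adjunction and are therefore only morphisms of \emph{graphs}, not functors: they send a morphism $f$ of $F^{n}\mathcal{C}$ to the corresponding generator $[f]$ of $F^{n+1}\mathcal{C}$, and $[g\circ f]\neq [g]\circ[f]$ in a free category. Consequently one cannot ``apply the functor $L^H$'' to them, nor apply them arrow-by-arrow to a hammock: a hammock is a commutative diagram, and in a free category two composites of generators coincide only when the corresponding words coincide literally, so the images of the commuting squares of a hammock no longer commute. (The same issue already obstructs compatibility with the simplicial operators of the hammock mapping space, which compose vertical arrows.) The extra-degeneracy argument is only valid at the level of hom-\emph{sets}, where it shows that the augmentation induces weak equivalences $F_*\mathcal{C}(X,Y)\to\mathcal{C}(X,Y)$ and $F_*W_{\mathcal{C}}(X,Y)\to W_{\mathcal{C}}(X,Y)$. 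To conclude that $\mathrm{diag}\, L^H(F_*\mathcal{C},F_*W_{\mathcal{C}})\to L^H(\mathcal{C},W_{\mathcal{C}})$ is a weak equivalence you then need Dwyer--Kan's \emph{homotopy invariance} of $L^H$: the mapping spaces of $L^H$ are filtered (homotopy) colimits, indexed by reduced zigzag types, of nerves built functorially from the hom-spaces of the category and of its subcategory of weak equivalences, so a relative functor inducing weak equivalences on these hom-spaces induces a Dwyer--Kan equivalence of hammock localizations. That invariance statement is a separate, nontrivial ingredient of \cite{DK2}; substituting it for your extra-degeneracy step repairs the proof.
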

Let us precise the definition of Dwyer-Kan equivalences:
\begin{defn}
Let $\mathcal{C}$ and $\mathcal{D}$ be two simplicial categories. A functor $F:\mathcal{C}\rightarrow\mathcal{D}$ is a Dwyer-Kan equivalence
if it induces weak equivalences of simplicial sets $Map_{\mathcal{C}}(X,Y)\stackrel{\sim}{\rightarrow}Map_{\mathcal{D}}(FX,FY)$
for every $X,Y\in\mathcal{C}$, as well as inducing
an equivalence of categories $\pi_0\mathcal{C}\stackrel{\sim}{\rightarrow}\pi_0\mathcal{D}$.
\end{defn}
Let us compile some useful results: first, every Quillen equivalence of model categories gives rise to a Dwyer-Kan equivalence of their
simplicial localizations, as well as a Dwyer-Kan equivalence of their hammock localizations
(see \cite{DK3} Proposition 5.4 in the case of simplicial model categories and \cite{Hin2} in the general case).
By Theorem 1.1 of \cite{Ber1}, there exists a model category structure on the category of (small) simplicial categories with the Dwyer-Kan equivalences as weak equivalences. Every simplicial category is Dwyer-Kan equivalent to the simplicial localization of a certain relative category
(see for instance \cite{BK2}, Theorem 1.7) and the associated model structure  is also a homotopy theory of homotopy theories.
The Reedy weak equivalences between two complete Segal spaces are precisely the Dwyer-Kan equivalences between
their associated homotopy theories (Theorem 7.2 of \cite{Rez2}).

\smallskip

Therefore the $\infty$-category associated to a relative category is thus, equivalently, the $\infty$-category associated to its simplicial localization or the $\infty$-category associated to its corresponding complete Segal space. The same construction applies to turn relative functors into $\infty$-functors. Moreover, it can be made functorial. For instance, given a relative category $(\mathcal{C},W_{\mathcal{C}})$, the associated quasi-category is given by the composite $N_{coh}L^H(\mathcal{C},W_{\mathcal{C}})^f$, where $L^H(-)$ is the Dwyer-Kan localization functor, $(-)^f$ is a functorial fibrant resolution in the Bergner model structure \cite{Ber1}, and $N_{coh}$ is the coherent nerve.

\subsubsection{From relative categories to homotopy automorphisms}

We  collect two useful lemmas  to obtain equivalences between $\infty$-categories of algebras, and see under which conditions they induce equivalences between the formal moduli problems controlling deformations of such algebras:
\begin{lem}\label{L: stricteq}
Let $F:(\mathcal{C},W_{\mathcal{C}})\rightleftarrows (\mathcal{D},W_{\mathcal{D}}):G$ be an adjunction of relative categories (that is, the functors $F$ and $G$ preserves weak equivalences) such that the unit and counit of this adjunction are pointwise weak equivalences. Then $F$ induces an equivalence of $\infty$-categories with inverse $G$.
\end{lem}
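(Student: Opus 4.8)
The plan is to show that the relative functors $F$ and $G$ descend to mutually inverse $\infty$-functors between the localizations $\mathcal{C}[W_{\mathcal{C}}^{-1}]$ and $\mathcal{D}[W_{\mathcal{D}}^{-1}]$, using the unit and counit to produce the required natural equivalences. First, since $F$ and $G$ preserve weak equivalences, applying the functorial localization procedure recalled in \S\ref{SS:inftyCat} (the composite $N_{coh}L^H(-)^f$) yields $\infty$-functors $\bar{F}\colon\mathcal{C}[W_{\mathcal{C}}^{-1}]\to\mathcal{D}[W_{\mathcal{D}}^{-1}]$ and $\bar{G}$ in the reverse direction, and functoriality of this construction gives canonical identifications $\overline{GF}\simeq\bar{G}\,\bar{F}$, $\overline{FG}\simeq\bar{F}\,\bar{G}$, and $\overline{\mathrm{Id}}\simeq\mathrm{Id}$. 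It then suffices to produce natural equivalences $\bar{G}\,\bar{F}\simeq\mathrm{Id}_{\mathcal{C}[W_{\mathcal{C}}^{-1}]}$ and $\bar{F}\,\bar{G}\simeq\mathrm{Id}_{\mathcal{D}[W_{\mathcal{D}}^{-1}]}$, since an $\infty$-functor admitting a two-sided inverse up to natural equivalence is an equivalence of $\infty$-categories (equivalently, one could then check the Dwyer--Kan criterion, but the two-sided inverse subsumes it).

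The heart of the argument is the following principle: a natural transformation $H\colon F_0\Rightarrow F_1$ between relative functors $(\mathcal{C},W_{\mathcal{C}})\to(\mathcal{D},W_{\mathcal{D}})$ all of whose components $H_X$ lie in $W_{\mathcal{D}}$ induces an equivalence $\bar{F_0}\simeq\bar{F_1}$ of the localized $\infty$-functors. To see this, encode $H$ as a functor $\tilde{H}\colon\mathcal{C}\times[1]\to\mathcal{D}$, where $[1]$ is the category $0\to1$ endowed with the relative structure in which its unique nonidentity arrow is declared a weak equivalence, and $\mathcal{C}\times[1]$ carries the product relative structure. The hypothesis $H_X\in W_{\mathcal{D}}$, together with the relativity of $F_0$ and $F_1$, makes $\tilde{H}$ a relative functor. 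The two inclusions $i_0,i_1\colon\mathcal{C}\to\mathcal{C}\times[1]$ satisfy $\tilde{H}\circ i_0=F_0$ and $\tilde{H}\circ i_1=F_1$. Since the interval direction of $\mathcal{C}\times[1]$ is inverted and hence contractible, the projection $p\colon\mathcal{C}\times[1]\to\mathcal{C}$ induces an equivalence $\bar{p}$ on localizations; from $\bar{p}\,\bar{i_0}=\overline{p i_0}=\mathrm{Id}=\overline{p i_1}=\bar{p}\,\bar{i_1}$ and the invertibility of $\bar{p}$ we deduce $\bar{i_0}\simeq\bar{i_1}$. Composing with $\overline{\tilde{H}}$ yields $\bar{F_0}\simeq\bar{F_1}$.

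Applying this principle to the unit $\eta\colon\mathrm{Id}_{\mathcal{C}}\Rightarrow GF$ and the counit $\varepsilon\colon FG\Rightarrow\mathrm{Id}_{\mathcal{D}}$, whose components are weak equivalences by hypothesis, gives $\mathrm{Id}\simeq\overline{GF}\simeq\bar{G}\,\bar{F}$ and $\bar{F}\,\bar{G}\simeq\overline{FG}\simeq\mathrm{Id}$, which exhibits $\bar{G}$ as a two-sided inverse of $\bar{F}$ and completes the proof. The main obstacle is the purely foundational input that the chosen model of $\infty$-localization is compatible with the cylinder $\mathcal{C}\times[1]$ and sends the inverted interval to a contractible object, so that $\bar{p}$ is an equivalence; this is precisely where one uses genuine properties of the hammock (or complete Segal space) localization recalled in \S\ref{SS:inftyCat}, rather than formal adjunction manipulations. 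Everything else reduces to bookkeeping with the functoriality of localization on composites and identities.
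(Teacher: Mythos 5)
Your proof is correct, but it takes a somewhat different route from the one in the paper. The paper works entirely inside the Barwick--Kan formalism: it observes that an adjunction whose unit and counit are pointwise weak equivalences exhibits $F$ as a (strict) homotopy equivalence of relative categories, and then invokes \cite[Proposition 7.5 (iii)]{BK1}, which says that the functor $N_{\xi}$ sends homotopy equivalences of relative categories to homotopy equivalences of bisimplicial sets, hence to Reedy (and so CSS) weak equivalences; fibrant replacement then finishes the argument. You instead work in the hammock/quasi-category picture and prove by hand the key principle that a natural transformation with components in $W_{\mathcal{D}}$ localizes to a natural equivalence, via the relative cylinder $\mathcal{C}\times[1]$ with the interval arrow inverted, and then apply it to $\eta$ and $\varepsilon$ to produce a two-sided inverse of $\bar{F}$. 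The two arguments rest on the same underlying fact --- natural weak equivalences become invertible $2$-cells after localization --- but yours is more self-contained (no appeal to the specific BK1 proposition) at the cost of needing the compatibility of $\infty$-localization with the product $\mathcal{C}\times[1]$ and the contractibility of the localized interval, which you correctly flag as the genuine foundational input; the paper's version is shorter because that input is already packaged in the cited result. One small point worth making explicit in your write-up: the invertibility of $\bar{p}$ should be justified by the product-compatibility of localization (e.g.\ via \cite{Hin2} or the DK-equivalence $L^H(\mathcal{C}\times[1])\simeq L^H(\mathcal{C})\times L^H([1])$) rather than by a section-plus-homotopy argument, since the latter would circularly use the very principle being proved.
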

\begin{proof}
Let us denote by $RelCat$ the category of relative categories. The objects are the relative categories
and the morphisms are the relative functors, that is, the functors restricting to functors between the
categories of weak equivalences.
By \cite[Theorem 6.1]{BK1}, there is an adjunction between
the category of bisimplicial sets and the category of relative categories
\[
K_{\xi}:sSets^{\Delta^{op}}\leftrightarrows RelCat:N_{\xi}
\]
(where $K_{\xi}$ is the left adjoint and $N_{\xi}$ the right adjoint)
which lifts any Bousfield localization of the Reedy model structure of bisimplicial sets into a model structure on $RelCat$.
In the particular case of the Bousfield localization defining the model category $CSS$ of complete Segal spaces \cite[Theorem 7.2]{Rez2}, one obtains a Quillen equivalent homotopy theory of $\infty$-categories in $RelCat$ \cite{BK1}.

\smallskip

As recalled in~\ref{SS:inftyCat}, a way to build the $\infty$-category associated to a relative category $(\mathcal{C},W_{\mathcal{C}})$ is to take a functorial fibrant resolution $N_{\xi}(\mathcal{C},W_{\mathcal{C}})^f$ of the bisimplicial set $N_{\xi}(\mathcal{C},W_{\mathcal{C}})$ in $CSS$ to get a complete Segal space. So we want to prove that $N_{\xi}F^f$ is a weak equivalence of $CSS$. For this, let us note first that the assumption on the adjunction between $F$ and $G$ implies that $F$ is a strict homotopy equivalence in $RelCat$ in the sense of \cite{BK1}. By \cite[Proposition 7.5 (iii)]{BK1}, the functor $N_{\xi}$ preserves homotopy equivalences, so $N_{\xi}F$ is a homotopy equivalence of bisimplicial sets, hence a Reedy weak equivalence. Since $CSS$ is a Bousfield localization of the Reedy model structure on bisimplicial sets, Reedy weak equivalences are weak equivalences in $CSS$, then by applying the fibrant resolution functor $(-)^f$ we conclude that $N_{\xi}F^f$ is a weak equivalence of complete Segal spaces.
\end{proof}
In the formalism of Dwyer-Kan's hammock localization, an equivalence of simplicial categories $F:\mathcal{C}\rightarrow \mathcal{D}$ satisfies in particular the following property:
for every two objects $X$ and $Y$ of $\mathcal{C}$, it induces a weak equivalence of simplicial mapping spaces
\[
L^H(\mathcal{C},W_{\mathcal{C}})(X,Y)\stackrel{\sim}{\rightarrow}L^H(\mathcal{D},W_{\mathcal{D}})(F(X),F(Y)).
\]
(in particular, the associated functor $Ho(F)$ at the level of homotopy categories is an equivalence).
We would like this weak equivalence to restrict at the level of homotopy automorphisms:
\begin{lem}\label{L: equivhaut}
Let $F:(\mathcal{C},W_{\mathcal{C}})\rightleftarrows (\mathcal{D},W_{\mathcal{D}}):G$ be an adjunction of relative categories satisfying the assumptions of Lemma \ref{L: stricteq}. Then the restriction of $F$ to the subcategories of weak equivalences
\[
wF:W_{\mathcal{C}}\rightarrow W_{\mathcal{D}}
\]
is an equivalence of simplicial localizations (actually an equivalence of $\infty$-groupoids) inducing a weak equivalence of homotopy automorphisms
\[
L^HW_{\mathcal{C}}(X,X)\stackrel{\sim}{\rightarrow}L^HW_{\mathcal{D}}(F(X),F(X)),
\]
where $L^HW_{\mathcal{C}}$ is Dwyer-Kan's hammock localization of $W_{\mathcal{C}}$ with respect to itself.
\end{lem}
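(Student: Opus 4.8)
The plan is to derive Lemma~\ref{L: equivhaut} as a direct consequence of Lemma~\ref{L: stricteq} by restricting everything to the subcategories of weak equivalences. First I would observe that an adjunction $F\dashv G$ of relative categories whose unit and counit are pointwise weak equivalences restricts to an adjunction $wF: W_{\mathcal{C}}\rightleftarrows W_{\mathcal{D}}: wG$ between the maximal subgroupoids (subcategories of weak equivalences). Indeed, since $F$ and $G$ are relative functors they send weak equivalences to weak equivalences, so they restrict to functors $wF$ and $wG$; moreover the components of the unit $\eta_X: X\to GFX$ and counit $\epsilon_Y: FGY\to Y$ are by hypothesis weak equivalences, hence they are morphisms in $W_{\mathcal{C}}$ and $W_{\mathcal{D}}$ respectively, so the triangle identities continue to hold inside the restricted categories. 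Thus $(W_{\mathcal{C}},W_{\mathcal{C}})\rightleftarrows(W_{\mathcal{D}},W_{\mathcal{D}})$ is again an adjunction of relative categories satisfying the hypotheses of Lemma~\ref{L: stricteq}, where we regard each $W_{\mathcal{C}}$ as a relative category with \emph{all} its morphisms being weak equivalences.

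Next I would apply Lemma~\ref{L: stricteq} to this restricted adjunction. This immediately yields that $wF$ induces an equivalence of $\infty$-categories $W_{\mathcal{C}}[W_{\mathcal{C}}^{-1}]\xrightarrow{\sim}W_{\mathcal{D}}[W_{\mathcal{D}}^{-1}]$ with inverse $wG$. Since every morphism of $W_{\mathcal{C}}$ is already a weak equivalence, this localization is precisely the Dwyer-Kan simplicial (equivalently, hammock) localization $L^HW_{\mathcal{C}}$, which is an $\infty$-groupoid: inverting all morphisms of a category produces a space (an $\infty$-groupoid), as every $1$-morphism becomes invertible. This is the content of the parenthetical ``actually an equivalence of $\infty$-groupoids'' in the statement.

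Finally, an equivalence of $\infty$-categories (here $\infty$-groupoids) induces, by definition, weak equivalences on all mapping spaces. Specializing to the mapping space from an object $X$ to itself gives the desired weak equivalence
\[
L^HW_{\mathcal{C}}(X,X)\stackrel{\sim}{\longrightarrow}L^HW_{\mathcal{D}}(F(X),F(X)),
\]
which by definition is the space of homotopy automorphisms of $X$ in $\mathcal{C}$ mapping to that of $F(X)$ in $\mathcal{D}$.

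The only genuinely delicate point, and the one I would be most careful about, is the verification that restricting to subcategories of weak equivalences is compatible with the passage to $\infty$-localizations in the way Lemma~\ref{L: stricteq} requires---that is, that the inclusion of $W_{\mathcal{C}}$ into $\mathcal{C}$ as a relative subcategory and its localization behave well, and that $L^HW_{\mathcal{C}}$ computed intrinsically (inverting $W_{\mathcal{C}}$ inside itself) agrees with the maximal $\infty$-subgroupoid of $\mathcal{C}[W_{\mathcal{C}}^{-1}]$. Since Lemma~\ref{L: stricteq} was proved at the level of the Barwick-Kan model structure on $RelCat$ via the functor $N_{\xi}$, and since the restricted adjunction is once again a strict homotopy equivalence in $RelCat$ in the sense of \cite{BK1}, the same argument via \cite[Proposition 7.5 (iii)]{BK1} applies verbatim to $wF$; this is why reducing to Lemma~\ref{L: stricteq} rather than re-proving everything is the economical route.
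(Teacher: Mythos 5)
Your proof is correct, and it reduces to Lemma~\ref{L: stricteq} just as the paper does, but it routes the reduction differently. The paper applies Lemma~\ref{L: stricteq} to the original adjunction on $(\mathcal{C},W_{\mathcal{C}})$, obtains an equivalence $L^H(\mathcal{C},W_{\mathcal{C}})\simeq L^H(\mathcal{D},W_{\mathcal{D}})$, and then asserts ``by construction'' (equivalently: an equivalence of $\infty$-categories induces an equivalence of the associated $\infty$-groupoids of weak equivalences) that $L^HW_{\mathcal{C}}\simeq L^HW_{\mathcal{D}}$, before reading off the mapping-space statement. You instead restrict the adjunction \emph{first}: you check that $wF\dashv wG$ is again an adjunction of relative categories $(W_{\mathcal{C}},W_{\mathcal{C}})\rightleftarrows(W_{\mathcal{D}},W_{\mathcal{D}})$ whose unit and counit are (automatically) pointwise weak equivalences, and then apply Lemma~\ref{L: stricteq} to this restricted adjunction. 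The verification that the adjunction restricts is correct and elementary (the unit and counit components are weak equivalences by hypothesis, hence lie in the subcategories, and the triangle identities are inherited), and it is the right thing to check since the unit--counit presentation is what the strict-homotopy-equivalence argument in $RelCat$ consumes. What your route buys is that you never need the identification of $L^HW_{\mathcal{C}}$ with the maximal sub-$\infty$-groupoid of $\mathcal{C}[W_{\mathcal{C}}^{-1}]$ (a Dwyer--Kan result hiding behind the paper's ``by construction''): you compute $L^HW_{\mathcal{C}}$ exactly as the statement defines it, as the hammock localization of $W_{\mathcal{C}}$ with respect to itself, so the ``delicate point'' you flag at the end is in fact not needed for your argument. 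What the paper's route buys is that it records the stronger and independently useful fact that the single equivalence of localized $\infty$-categories simultaneously controls both the mapping spaces of $L^H(\mathcal{C},W_{\mathcal{C}})$ and the homotopy automorphisms, which is how the lemma is used later.
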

\begin{proof}
This adjunction of relative categories induces, by Lemma \ref{L: stricteq}, an equivalence of simplicial localizations between $L^H(\mathcal{C},W_{\mathcal{C}})$ and $L^H(\mathcal{D},W_{\mathcal{D}})$. By construction, this implies that the simplicial categories $L^HW_{\mathcal{C}}$ and $L^HW_{\mathcal{D}}$ are equivalent as well. Alternately, one could say that an equivalence of $\infty$-categories induces an equivalence of the associated $\infty$-groupoids of weak equivalences. By definition of an equivalence of simplicial categories, we get the desired equivalence between the simplicial mapping spaces of $L^HW_{\mathcal{C}}$ and their images under $F$ in $L^HW_{\mathcal{D}}$ (that is, an equivalence of homotopy automorphisms).
\end{proof}

\end{document}